\newcommand{\BN}{{\mathbb{N}}}
\newcommand{\BR}{{\mathbb{R}}}
\newcommand{\BC}{{\mathbb{C}}}
\newcommand{\BQ}{{\mathbb{Q}}}
\newcommand{\BH}{{\mathbb{H}}}
\newcommand{\OO}{{\mathcal{O}}}
\newcommand{\gD}{\Delta}
\newcommand{\gd}{\delta}
\newcommand{\gb}{\beta}
\newcommand{\gC}{\Gamma}
\newcommand{\gc}{\gamma}
\newcommand{\gs}{\sigma}
\newcommand{\gS}{\Sigma}
\newcommand{\go}{\omega}
\newcommand{\gep}{\epsilon}
\newcommand{\gL}{\Lambda}
\newcommand{\ga}{\alpha}
\newcommand{\ti}[1]{\tilde{#1}}
\newcommand{\ram}{\mathrm{Ram}}
\newcommand{\vol}{\mathrm{vol}}
\newcommand{\Min}{\mathrm{Min}}
\newcommand{\SL}{\mathrm{SL}}
\newcommand{\GL}{\mathrm{GL}}
\newcommand{\PSL}{\mathrm{PSL}}
\newcommand{\PGL}{\mathrm{PGL}}
\newcommand{\SO}{\mathrm{SO}}
\newcommand{\PSO}{\mathrm{PSO}}
\newcommand{\covol}{\mathrm{covol}}
\newcommand{\Irr}{\mathrm{Irr}}
\newtheorem{prop}{Proposition}[section]
\newtheorem{thm}[prop]{Theorem}
\newtheorem{lem}[prop]{Lemma}
\newtheorem{cor}[prop]{Corollary}
\theoremstyle{definition}
\newtheorem{rem}[prop]{Remark}
\def\c{\chi}
  \def\G{\Gamma}
  \def\la{\lambda}
  \def\s{\sigma}
  \def\go{\rightarrow}
\begin{document}

\author[M. Belolipetsky]{Mikhail Belolipetsky}
\address{Department of Mathematical Sciences, Durham University, Durham DH1 3LE, UK;
Sobolev Institute of Mathematics, Koptyuga 4, 630090 Novosibirsk, Russia}
\curraddr{}
\email{mikhail.belolipetsky@durham.ac.uk}
\thanks{The authors acknowledge support from the BSF, ISF, EPSRC and ERC, and the valuable comments and corrections of the anonymous referee}

\address{Institute of Mathematics, The Hebrew University, Givat Ram, Jerusalem, 91904, Israel}
\author[T. Gelander]{Tsachik Gelander}
\curraddr{}
\email{tsachik.gelander@gmail.com}
\thanks{}
\author[A. Lubotzky]{Alex Lubotzky}
\curraddr{}
\email{alexlub@math.huji.ac.il}
\thanks{}
\author[A. Shalev]{Aner Shalev}
\curraddr{}
\email{shalev@math.huji.ac.il}
\thanks{}

\date{\today}

\title{Counting Arithmetic Lattices and Surfaces}
\maketitle

\begin{abstract}
We give estimates on the number $\mathrm{AL}_H(x)$ of conjugacy classes of arithmetic lattices
$\Gamma$ of covolume at most $x$ in a simple Lie group $H$. In particular,
we obtain a first concrete estimate on the number of arithmetic $3$-manifolds of volume at most $x$.
Our main result is for the classical case $H=\PSL(2,\BR)$ where we show that
$$
 \lim_{x\to\infty}\frac{\log \mathrm{AL}_H(x)}{x\log x}=\frac{1}{2\pi}.
$$
The proofs use several different techniques: geometric (bounding the
number of generators of $\Gamma$ as a function of its covolume),
number theoretic (bounding the number of maximal such $\Gamma$)
and sharp estimates on the character values of the symmetric groups
(to bound the subgroup growth of $\Gamma$).
\end{abstract}


\section{Introduction}

Let $H$ be a noncompact simple Lie group with a fixed Haar measure
$\mu$. A discrete subgroup $\Gamma$ of $H$ is called a lattice if
$\mu({\Gamma}\backslash{H})<\infty$. A classical theorem of Wang \cite{Wang}
asserts that if $H$ is not locally isomorphic to $\PSL_2(\BR)$ or
$\PSL_2(\BC)$, then for every $0<x\in\BR$ the number
$\mathrm{L}_H(x)$ of conjugacy classes of lattices in $H$ of covolume
at most $x$ is finite. This result was greatly extended by Borel
and Prasad \cite{BP}. In recent years there has been an attempt to
quantify Wang's theorem and to give some estimates on
$\mathrm{L}_H(x)$ (see \cite{BGLM, Ge, GLNP, Be, BL}).

If $H=\PSL_2(\BR)$ or $\PSL_2(\BC)$, then $\mathrm{L}_H(x)$ is
usually not finite (and even uncountable in the first case). Still
Borel \cite{Bo} showed that the number $\mathrm{AL}_H(x)$ of conjugacy classes
of \emph{arithmetic} lattices in $H$ of covolume at most $x$ is finite
for every $x\in\BR$.

In this paper we study the asymptotic behavior of AL$_H(x)$ when
$x \to \infty$. Our first result gives a general upper bound.

\begin{thm}\label{thm:upperbound}
Assume that $H$ is of real rank one.
There exists a constant $b = b(H, \mu)$ such that
$\mathrm{AL}_H(x) \le x^{bx}$ for all $x \gg 0$.
\end{thm}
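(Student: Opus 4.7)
The plan is to bound $\mathrm{AL}_H(x)$ by enclosing each arithmetic lattice $\Gamma$ in a maximal arithmetic lattice and then controlling these two pieces of data separately. By a theorem of Borel--Prasad \cite{BP}, every arithmetic lattice $\Gamma$ in $H$ lies in some maximal arithmetic lattice $M$; writing $V = \covol(M)$ and using $\covol(\Gamma) \le x$, we have $[M:\Gamma] \le x/V$. By the Kazhdan--Margulis theorem there is a uniform positive lower bound $V_0 = V_0(H,\mu)$ on the covolume of any lattice in $H$, so $[M:\Gamma] \le x/V_0$ as well. Hence
\[
\mathrm{AL}_H(x) \;\le\; \sum_{M}\, \#\bigl\{\Gamma \le M \,:\, [M:\Gamma] \le x/V\bigr\},
\]
the sum ranging over $H$-conjugacy classes of maximal arithmetic lattices $M$ of covolume $V \le x$. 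It then suffices to bound the number of summands and each individual summand.

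For the inner count, I would combine a geometric with an elementary combinatorial input. The geometric input is the BGLM-type bound of \cite{BGLM,Ge}: any lattice $M$ of covolume $V$ in the rank-one group $H$ admits a generating set of size at most $d_1 V$, for some constant $d_1 = d_1(H,\mu)$; this is proved using the Margulis lemma together with a packing argument on the thick part of the orbifold $M\backslash H/K$. The combinatorial input is that a $d$-generated group has at most $(n!)^d \le n^{dn}$ subgroups of index $\le n$. Applying these with $d \le d_1 V$ and $n \le x/V$ gives
\[
\#\bigl\{\Gamma \le M : [M:\Gamma] \le x/V\bigr\} \;\le\; (x/V)^{d_1 V \cdot (x/V)} \;=\; (x/V)^{d_1 x} \;\le\; x^{d_1 x},
\]
a bound that is, crucially, independent of $V$.

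The remaining and principal obstacle is to control $N_{\max}(x)$, the number of $H$-conjugacy classes of maximal arithmetic lattices of covolume at most $x$. This step is number-theoretic, and is where one must quantify Borel's finiteness result \cite{Bo}. In the rank-one setting each such $M$ is specified by an admissible algebraic group $G$ over a number field $k$, together with a coherent family of parahoric subgroups at the finite places of $k$. Prasad's volume formula shows that $\covol(M)$ grows at least as a fixed positive power of the absolute discriminant of $k$ and of the corresponding local invariants, so for $\covol(M) \le x$ the degree of $k$, its discriminant, the form of $G$, and the local parahoric data each lie in a set of size at most $x^{c_0 x}$ for some $c_0 = c_0(H,\mu)$ (cf.\ \cite{Be,BL}). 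Thus $N_{\max}(x) \le x^{c_0 x}$, and combining with the previous step,
\[
\mathrm{AL}_H(x) \;\le\; N_{\max}(x) \cdot x^{d_1 x} \;\le\; x^{(c_0 + d_1)x} \;=\; x^{bx}
\]
with $b = c_0 + d_1$, which proves the theorem.
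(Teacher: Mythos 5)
Your overall strategy matches the paper's: decompose $\mathrm{AL}_H(x)$ by enclosing each arithmetic lattice in a maximal one, control the number of maximal lattices of covolume $\le x$, and then bound the number of finite-index subgroups of each maximal lattice using a linear-in-volume bound on the number of generators plus the elementary estimate $s_n(\Gamma) \le n^{d(\Gamma)n}$. Kazhdan--Margulis closes the loop. This is exactly how the paper proceeds.

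There is, however, a gap in your appeal to \cite{BGLM,Ge} for the bound $d(M) \le d_1 \covol(M)$. Those works only establish this for \emph{torsion-free} lattices, i.e. for manifolds, not orbifolds. The maximal arithmetic lattices $M$ you are applying it to almost always have torsion, and the BGLM/Ge technique (good covers of the thick part) does not transfer for free: elliptic elements give singular loci in the quotient, and the thick/thin decomposition and Morse-theoretic retraction have to be redone with care around the fixed sets of elliptic elements and axes of hyperbolic elements. Passing to a torsion-free finite-index subgroup does not help either, since the minimal such index is not uniformly bounded (already for $(p,q,r)$-triangle groups this index can be arbitrarily large). This extension to all lattices, including those with torsion, is precisely Theorem \ref{thm:d(Gamma)} of the paper, which is proved in Section \ref{sec:d(Gamma)} and is flagged in the introduction as one of the main new ingredients. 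You cannot simply cite it as known.

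Your treatment of the number of maximal lattices is also sketchier than what is needed but in the right spirit: the paper establishes a much stronger estimate $\mathrm{MAL}_H(x) \le x^{\beta(\log x)^\epsilon}$ for $\PGL_2(\BR)^a\times\PGL_2(\BC)^b$ in Theorem \ref{thm:counting-maximal}, combined with \cite{Be} for the remaining rank-one groups, and any bound of the form $x^{c_0 x}$ is indeed more than sufficient for Theorem \ref{thm:upperbound}, so that part is fine modulo making the citation precise. Finally, a small algebraic point: your inequality $(x/V)^{d_1 x} \le x^{d_1 x}$ implicitly assumes $V\ge 1$; one should instead write $(x/V)^{d_1 x} \le (x/V_0)^{d_1 x}$ with $V_0$ the Kazhdan--Margulis bound and absorb $V_0^{-d_1 x}$ into the constant $b$ for $x\gg 0$.
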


Theorem \ref{thm:upperbound} is also true if the rank of $H$ is greater than one, see Remark \ref{rem:5.1} below and \cite{Ge:morse}, but for most higher rank groups much better estimates are given in \cite{BL}. Our next result shows that Theorem~\ref{thm:upperbound} is best
possible in general.
\medskip

\noindent
\begin{thm}\label{thm:lowerbound} For $H = \mathrm{PSO}(n,1)$ there exists a constant
$a = a(n) > 0$ such that $\mathrm{AL}_H(x) \ge x^{ax}$ for all $x \gg 0$.
\end{thm}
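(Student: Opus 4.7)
The plan is to fix a single well-chosen arithmetic lattice $\Gamma_0\le H=\mathrm{PSO}(n,1)$ of some covolume $V_0$ and to exhibit factorially many distinct $H$-conjugacy classes of finite-index subgroups of $\Gamma_0$. Since each subgroup of index $d$ in $\Gamma_0$ is itself an arithmetic lattice of covolume $dV_0$, a lower bound of $(d-1)!$ on the number of index-$d$ subgroups translates, via $x=dV_0$ and Stirling, into the desired estimate $\mathrm{AL}_H(x)\ge x^{ax}$.

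To produce such a $\Gamma_0$ I would start from the non-cocompact arithmetic lattice associated to a $\mathbb{Q}$-rational isotropic quadratic form of signature $(n,1)$, which exists for every $n\ge 2$ by the Borel--Harish-Chandra construction. By a theorem of Lubotzky on free quotients of arithmetic lattices of simplest type in $\mathrm{PSO}(n,1)$ (from \emph{Free quotients and the first Betti number of some hyperbolic manifolds}), I may pass to a finite-index subgroup --- still called $\Gamma_0$ --- admitting a surjective homomorphism $\pi:\Gamma_0\twoheadrightarrow F_2$ onto a free group of rank two.

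The counting will then follow Marshall Hall's recursive formula for subgroup growth, which yields at least $(d-1)!$ subgroups of $F_2$ of index exactly $d$; pulling them back under $\pi$ produces the same number of subgroups of index $d$ in $\Gamma_0$, each itself arithmetic in $H$. To turn subgroups into $H$-conjugacy classes of lattices, I would note that any $g\in H$ with $g\Delta g^{-1}\le\Gamma_0$ for some index-$d$ subgroup $\Delta$ must lie in the commensurator $C=\mathrm{Comm}_H(\Gamma_0)$, which by Margulis' commensurator rigidity is the group of $k$-points of the ambient algebraic group, and the hypothesis $[\Gamma_0:\Delta]=d$ then forces $g\Gamma_0 g^{-1}\cap\Gamma_0$ to have index at most $d$ in $\Gamma_0$. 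Combined with Borel's finiteness of maximal arithmetic lattices in the commensurability class, this restricts each $H$-conjugacy class to at most $d^{O(1)}$ of our subgroups, and dividing $(d-1)!$ by this polynomial factor preserves the factorial lower bound.

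The principal obstacle is the input in the second paragraph: ensuring that some arithmetic lattice in $\mathrm{PSO}(n,1)$ virtually surjects onto a nonabelian free group. For $n=2,3$ this is classical, but for larger $n$ it relies on Lubotzky's theorem, which in turn uses the $\mathbb{Q}$-rank-one parabolics of the isotropic quadratic form together with congruence techniques to construct first a large abelian quotient and then a free nonabelian one. Once that ingredient is available, the Hall counting and the commensurator bookkeeping are essentially formal and yield the required $x^{ax}$ estimate with some explicit $a=a(n)>0$.
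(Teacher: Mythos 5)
Your setup is exactly the paper's: take the arithmetic lattice $\Gamma_0\le\PSO(n,1)$ from Lubotzky's free-quotient theorem, pull back index-$d$ subgroups of $F_2$ to get at least $(d-1)!$ sublattices of index $d$, and then divide out by $H$-conjugacy. The gap is in the last step, the ``conjugacy bookkeeping,'' and it is a real one.

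You argue that a conjugating element $g$ with $g\Delta g^{-1}\le\Gamma_0$ lies in $\mathrm{Comm}_H(\Gamma_0)$ and that $g\Gamma_0 g^{-1}\cap\Gamma_0$ has index $\le d$, and then invoke Borel's finiteness of maximal arithmetic lattices to conclude that each $H$-conjugacy class meets your family in at most $d^{O(1)}$ subgroups. That conclusion does not follow. The commensurator is \emph{dense} in $H$, not discrete, so knowing $g\in\mathrm{Comm}_H(\Gamma_0)$ with $[\Gamma_0:g\Gamma_0g^{-1}\cap\Gamma_0]\le d$ by itself imposes no useful finiteness: the number of subgroups of $\Gamma_0$ of index $\le d$ is itself super-exponential in $d$, and Borel's theorem only controls maximal lattices up to conjugacy, not the number of index-$d$ subgroups of $\Gamma_0$ lying in a fixed $H$-conjugacy class. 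A priori, a single $H$-conjugacy class could swallow a positive proportion of the $(d-1)!$ subgroups, and nothing in your argument rules this out.

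The paper's proof closes this gap by a different device: it first passes to a single \emph{fiber of the congruence-closure map}. Writing $L(n)$ for the index-$\le n$ subgroups of $\Gamma_0$ and $C(n)$ for the congruence subgroups of index $\le n$, the map $L(n)\to C(n)$ sending a subgroup to its congruence closure has image of size at most $n^{c_0\log n/\log\log n}$ (Lubotzky's bound on the congruence subgroup growth), so some fiber has size $\ge n!/n^{c_0\log n/\log\log n}\ge n^{c_1 n}$. The point is that if $\Gamma_1,\Gamma_2$ lie in the same fiber over a congruence subgroup $\Gamma'$ and $\Gamma_1=g\Gamma_2g^{-1}$, then $g\in\mathrm{Comm}_H(\Gamma_0)$ acts by $k$-rational morphisms and so \emph{preserves the congruence topology}; hence $g$ must carry the congruence closure of $\Gamma_2$ to that of $\Gamma_1$, i.e.\ $g\in N_H(\Gamma')$. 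Now $N_H(\Gamma')$ is itself a \emph{lattice}, and Kazhdan--Margulis bounds its covolume below, so $[N_H(\Gamma'):\Gamma']$ (and hence the number of $N_H(\Gamma')$-conjugates of an index-$\le n$ subgroup) is at most polynomial in $n$. Dividing $n^{c_1 n}$ by this polynomial gives the desired $n^{c_1'n}$ classes. Your proof is missing both the restriction to a congruence fiber and the crucial observation that on such a fiber the conjugator normalizes a fixed congruence subgroup and therefore lies in a \emph{discrete} (indeed, lattice) subgroup of $H$, which is what makes the bookkeeping go through.
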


One novelty of the current work compared to \cite{BGLM} and \cite{Ge} is that it deals with orbifolds rather than manifolds, i.e. we do not require the lattices to be torsion free. However, it is clear from the proof that the lower bound remains valid when restricting only to conjugacy classes of arithmetic {\it torsion free} lattices.
Another novelty is that it covers the case of $H=\mathrm{PSO}(3,1)=\mathrm{Isom}(\BH^3)$, for which the result translates to: {\it the number of arithmetic hyperbolic $3$ orbifolds (or manifolds) of volume at most $x$ is roughly $x^{cx}$ for large $x$}. Prior to this work no explicit upper bound was known in this case, as well as for the case $H=\PSO(2,1)$. The upper bound obtained here confirms the expected estimate which follows from the Lehmer conjecture concerning algebraic integers (cf. \cite{Ge}).

The proofs of Theorems \ref{thm:upperbound} and \ref{thm:lowerbound} allow one to compute concrete constants, but in general it seems very difficult to obtain sharp estimates for these constants. The main part of this paper is dedicated to the classical case $H=\SO(2,1)^{\circ} \cong \PSL_2(\BR)$
where we obtain a very sharp estimate:

\begin{thm}\label{thm:SL(2,R)}
Let $H=\PSL_2(\BR)$ endowed with the Haar measure induced from the
Riemanian measure of the hyperbolic plane $\BH^2= \PSL_2(\BR)/\PSO(2)$.
Then
 $$\lim_{x\to\infty}\frac{\log \mathrm{AL}_H(x)}{x\log x}=\frac{1}{2\pi}.$$
\end{thm}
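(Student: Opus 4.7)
The plan is to establish matching upper and lower bounds on $\log\mathrm{AL}_H(x)$ of the form $(1+o(1))\frac{x\log x}{2\pi}$. The main technical input on both sides is the sharp Müller--Liebeck--Shalev asymptotic
\[
\log s_n(\Gamma)=\bigl(-\chi(\Gamma)+o(1)\bigr)\,n\log n=\Bigl(\tfrac{\mu(\Gamma)}{2\pi}+o(1)\Bigr)\,n\log n
\]
for the number $s_n(\Gamma)$ of index-$n$ subgroups of a cocompact Fuchsian group $\Gamma$, where $\chi(\Gamma)$ is the orbifold Euler characteristic and the second equality is Gauss--Bonnet. Its proof goes via the standard character-sum formula for $|\mathrm{Hom}(\Gamma,S_n)|$ together with precise bounds on irreducible characters of $S_n$.

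For the lower bound, I fix one cocompact arithmetic Fuchsian group $\Gamma_0\subset H$ of covolume $V_0$ (concretely the $(2,3,7)$-triangle group, with $V_0=\pi/21$). Every finite-index subgroup of $\Gamma_0$ is again arithmetic and in the same commensurability class, and a subgroup of index $n$ has covolume $nV_0$. Applying the asymptotic to $\Gamma_0$ and noting that each $H$-conjugacy class contains at most $O(n)$ subgroups of $\Gamma_0$ (the normalizer of $\Gamma_0$ in $\mathrm{Comm}_H(\Gamma_0)$ has finite index), I get
\[
\log\mathrm{AL}_H(x)\ \ge\ \log s_{\lfloor x/V_0\rfloor}(\Gamma_0)-O(\log x)\ =\ (1+o(1))\,\frac{V_0}{2\pi}\cdot\frac{x}{V_0}\log\frac{x}{V_0}\ =\ (1+o(1))\,\frac{x\log x}{2\pi}.
\]
The factor $V_0$ cancels between the constant $V_0/(2\pi)$ and the index range $x/V_0$, so the final coefficient is intrinsic and does not depend on the choice of $\Gamma_0$.

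For the upper bound, I invoke Borel's theorem that each arithmetic lattice $\Gamma\subset H$ sits inside a unique maximal arithmetic lattice $\Gamma^m$ in its commensurability class, with $[\Gamma^m:\Gamma]=\covol(\Gamma)/\covol(\Gamma^m)$. Let $C(x)$ denote the number of maximal arithmetic Fuchsian groups of covolume at most $x$. Borel's volume formula, combined with the classification of arithmetic Fuchsian groups via quaternion algebras over totally real number fields, yields $C(x)\le x^{O(1)}$, so $\log C(x)=o(x\log x)$. For a single commensurability class with $V_m=\covol(\Gamma^m)\in[V_{\min},x]$, the subgroup growth asymptotic gives
\[
\log\sum_{n\le x/V_m}s_n(\Gamma^m)\ =\ (1+o(1))\,\frac{V_m}{2\pi}\cdot\frac{x}{V_m}\log\frac{x}{V_m}\ =\ (1+o(1))\,\frac{x\log x}{2\pi},
\]
since $\log V_m=O(\log x)$ contributes only $o(x\log x)$. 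Summing over the $C(x)$ commensurability classes leaves the leading coefficient unchanged.

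The heart of the matter, and the main obstacle, is the sharpness of the leading constant in the subgroup growth asymptotic. A merely qualitative upper bound $\log s_n(\Gamma)\le Cn\log n$ with constant depending on $\Gamma$ would give the right order $x\log x$ but the wrong coefficient. Proving the exact constant $-\chi(\Gamma)$ rests on precise two-sided bounds on character values of $S_n$ of shape $|\chi(C)|/\chi(1)\le \chi(1)^{-\alpha}$ for the large conjugacy classes $C$ dictated by the torsion of $\Gamma$. A secondary challenge is the uniformity required to handle all maximal arithmetic Fuchsian groups $\Gamma^m$ simultaneously: the intrinsic constant $\mu(\Gamma^m)/(2\pi)$ in the asymptotic must combine cleanly with the index range $n\le x/V_m$ to produce the universal coefficient $1/(2\pi)$, regardless of $V_m\in[V_{\min},x]$.
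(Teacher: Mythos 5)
Your high-level architecture matches the paper's — bound the number of maximal arithmetic lattices, then count finite-index subgroups, using a starting lattice such as the $(2,3,7)$-triangle group for the lower bound — but there are two genuine gaps, one on each side of the inequality.

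\textbf{Lower bound.} You need to prevent overcounting: distinct finite-index subgroups $\Gamma_1,\Gamma_2\le\Gamma_0$ may be conjugate in $H$. Your justification that ``the normalizer of $\Gamma_0$ in $\mathrm{Comm}_H(\Gamma_0)$ has finite index'' does not do the job. If $g\Gamma_2 g^{-1}=\Gamma_1$, the element $g$ lies in $\mathrm{Comm}_H(\Gamma_0)$ but there is no reason for it to normalize $\Gamma_0$; and since $\Gamma_0$ is arithmetic, $\mathrm{Comm}_H(\Gamma_0)$ is \emph{dense} in $H$, so no finite-index statement about it is available. The paper closes this hole via the congruence topology: the number of congruence subgroups of index $\le n$ is only $n^{O(\log n/\log\log n)}$ (Lubotzky), so many of the $\ge n!$ subgroups share a common congruence closure $\Gamma_0'$; any $g\in\mathrm{Comm}_H(\Gamma_0)$ conjugating two of them preserves the congruence topology and therefore normalizes $\Gamma_0'$, and $N_H(\Gamma_0')$ is a lattice of bounded covolume, whence only $O(n)$ of these subgroups fall into a single $H$-conjugacy class. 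Without this congruence-closure step the lower bound is not established.

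\textbf{Upper bound.} You invoke the asymptotic $\log s_n(\Gamma)=(-\chi(\Gamma)+o(1))\,n\log n$ and then apply it to every maximal lattice $\Gamma^m$ of covolume $\le x$, maximizing over a family that grows with $x$. You correctly flag uniformity as a ``challenge,'' but you do not resolve it, and it cannot be waved away: the implied $o(1)$ in the Liebeck--Shalev/M\"uller--Puchta asymptotic depends on $\Gamma$, so in principle it could be large for $\Gamma^m$ of covolume comparable to $x$, destroying the leading constant. This is exactly the difficulty the paper singles out and overcomes by proving the \emph{uniform} bound $s_n(\Gamma)\le(cn)^{-\chi(\Gamma)n}$ with an absolute constant $c$ (Theorem \ref{thm:nsubgroups}), via the Fomin--Lulov character estimate, the Murnaghan--Nakayama rule, and the bound $\sum_{\chi\in\Irr(S_n)}\chi(1)^{-s}\le C(s)$. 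That uniform inequality, combined with the Siegel lower bound $\covol(\Lambda)\ge\pi/21$ and the quasi-polynomial count of maximal lattices, is what actually delivers $\mathrm{AL}_H(x)\le(sx)^{x/(2\pi)}$. Your sketch correctly identifies where the constant $1/(2\pi)$ comes from, but proving that the error does not drift requires this new uniform ingredient, not just the pointwise asymptotic.

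A minor inaccuracy: a lattice need not sit inside a \emph{unique} maximal arithmetic overgroup, and the bound on the number of commensurability classes the paper obtains is $x^{O((\log x)^\epsilon)}$, not $x^{O(1)}$; neither affects your conclusion that $\log C(x)=o(x\log x)$.
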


The proof of Theorem \ref{thm:SL(2,R)} shows:

\begin{cor}\label{cor:1.4}
Let $\mathrm{AS}(g)$ be the number of arithmetic Riemann surfaces of genus $g$. Then
$$
 \lim_{g\to\infty}\frac{\log \mathrm{AS}(g)}{g\log g}=2.
$$
\end{cor}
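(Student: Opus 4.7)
The plan is to translate the covolume asymptotic of Theorem \ref{thm:SL(2,R)} into a genus asymptotic via Gauss--Bonnet. Every closed orientable Riemann surface of genus $g\ge 2$ carries a unique hyperbolic metric of area $4\pi(g-1)$, so $\mathrm{AS}(g)$ is exactly the number of $\PSL_2(\BR)$-conjugacy classes of torsion-free cocompact arithmetic lattices of covolume equal to $4\pi(g-1)$. The factor $4\pi$ here is precisely what converts the constant $1/(2\pi)$ of Theorem \ref{thm:SL(2,R)} into the constant $2$ in the corollary.

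For the upper bound I would invoke the trivial inclusion $\mathrm{AS}(g)\le \mathrm{AL}_H(4\pi(g-1))$ and substitute $x=4\pi(g-1)$ into Theorem \ref{thm:SL(2,R)}, giving
$$\log \mathrm{AS}(g) \le \left(\tfrac{1}{2\pi}+o(1)\right) 4\pi(g-1)\log(4\pi(g-1)) = (2+o(1))\,g\log g.$$

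For the lower bound I would rerun the construction from the proof of Theorem \ref{thm:SL(2,R)}, arranged, as the authors note following Theorem \ref{thm:lowerbound}, to produce only torsion-free cocompact lattices. Fix a torsion-free arithmetic cocompact $\Gamma_0\le\PSL_2(\BR)$ with $\BH^2/\Gamma_0$ of genus $g_0=2$; such a $\Gamma_0$ exists, for instance the uniformizing group of the Bolza surface. By Riemann--Hurwitz any index-$n$ subgroup of $\Gamma_0$ has quotient of genus $n(g_0-1)+1=n+1$, so setting $n=g-1$ realizes every $g\ge 2$. The subgroup-growth estimate underlying Theorem \ref{thm:SL(2,R)} then supplies at least $n^{(2g_0-2+o(1))n}=n^{(2+o(1))n}$ such subgroups, while the $\PSL_2(\BR)$-conjugacy overcount is absorbed into the $o(1)$ factor by the same bookkeeping done there. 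Consequently $\mathrm{AS}(g)\ge g^{(2-o(1))g}$, matching the upper bound.

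The main technical point is that the corollary concerns lattices of \emph{exact} covolume $4\pi(g-1)$ rather than covolume at most $x$; the subgroup construction supplies exact covolumes automatically, since each index-$n$ subgroup of $\Gamma_0$ has covolume $4\pi n$, and the choice $g_0=2$ is what ensures every positive integer $n=g-1$ is reached. The only remaining subtlety is the commensurator/conjugacy bookkeeping, which is inherited verbatim from the proof of Theorem \ref{thm:SL(2,R)}.
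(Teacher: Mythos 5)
Your proof is correct and follows the same strategy as the paper's: Gauss--Bonnet converts genus to covolume (the factor $4\pi$ turning $\tfrac{1}{2\pi}$ into $2$), the upper bound is read off from Theorem~\ref{thm:SL(2,R)} by the trivial inclusion $\mathrm{AS}(g)\le\mathrm{AL}_H(4\pi(g-1))$, and the lower bound is obtained by counting finite-index subgroups of a torsion-free arithmetic lattice, with the $H$-conjugacy overcount controlled by the congruence-subgroup argument from the proof of Theorem~\ref{thm:lowerbound}.

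The one place where you add something substantive is the observation that the torsion-free starting lattice $\Gamma_0$ must have genus $g_0 = 2$ (e.g.\ the uniformizer of the Bolza surface). The paper only says ``choose as a starting arithmetic lattice one which is torsion free,'' but, as you note, an index-$n$ subgroup of a torsion-free $\Gamma_0$ of genus $g_0$ produces a surface of genus $n(g_0-1)+1$; if $g_0>2$ this hits only a single residue class of genera, which yields a lower bound only along a subsequence of $g$ and would not by itself establish the stated limit. Taking $g_0=2$ fixes this, since then every $g\ge 2$ arises as $n+1$. This is a real and necessary clarification of a point the paper glosses over.
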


Let us now describe the main ingredients of the proofs. We start with a
result on the number $d(\G)$ of generators of lattices $\G$,
which is of independent interest.

\begin{thm}\label{thm:d(Gamma)}
Let $H$ be a connected simple Lie group of real rank one. Then there is an effectively  computable constant $C=C(H)$ such that for any lattice $\gC < H$ we have $d(\gC)\le C\cdot\vol(\gC\backslash H)$.
\end{thm}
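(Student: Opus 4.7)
The plan is to carry out the thick-thin decomposition of the quotient orbifold $Y = \gC \backslash X$, where $X = H/K$, and then to build an open cover of $Y$ by $O(\vol(Y))$ orbifold charts each of which has a uniformly boundedly generated fundamental group. By the Kazhdan--Margulis--Zassenhaus theorem there exists $\varepsilon = \varepsilon(H) > 0$ such that for every discrete subgroup $\gC < H$ and every $x \in X$ the subgroup $\gC_\varepsilon(x) := \langle \gamma \in \gC : d(x,\gamma x) < \varepsilon\rangle$ is virtually nilpotent. In rank one such a group is either finite, virtually cyclic (the stabilizer of a short closed geodesic), or a lattice in a horospherical subgroup of $H$; a rank argument bounded by $\dim H$ then shows that in each of these cases it is generated by at most $d_0 = d_0(H)$ elements.

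Decompose $Y = Y_{\ge\varepsilon} \cup Y_{<\varepsilon}$ using the local injectivity radius, and choose a maximal $\varepsilon/2$-separated subset $\{y_1,\ldots,y_N\} \subset Y_{\ge\varepsilon}$. The $\varepsilon/4$-balls about the $y_i$ inject into $X$ and are pairwise disjoint in $Y$, so
$$
N \cdot v(\varepsilon/4) \le \vol(Y),
$$
where $v(r)$ is the volume of a ball of radius $r$ in $X$; in particular $N \le C_1(H)\vol(Y)$, and the $\varepsilon/2$-balls about the $y_i$ cover $Y_{\ge\varepsilon}$. Pinched negative curvature of $X$ furnishes a ``kissing number'' bound $k = k(H)$ such that every $\varepsilon$-ball in $X$ meets at most $k$ other $\varepsilon$-balls and intersects at most $k$ connected components of $Y_{<\varepsilon}$. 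Since the closure of every thin component meets $Y_{\ge\varepsilon}$ along a nonempty level hypersurface (Margulis-tube boundary or cusp horosphere), and that hypersurface lies in some thick ball of the net, the total number of thin components is bounded by $kN = O(\vol(Y))$.

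Enlarging each thin component to a connected open neighborhood and combining with the $\varepsilon/2$-balls about the $y_i$ yields an open cover $\mathcal{U}$ of $Y$. Each chart is either the quotient of a geodesic ball in $X$ by a finite subgroup of the maximal compact $K$ (with a bounded number of generators by Jordan's theorem) or the quotient of a horoball/Margulis tube by a discrete virtually nilpotent subgroup (bounded by the Margulis step). In both cases the local orbifold fundamental group is generated by at most $D(H)$ elements. The nerve of $\mathcal{U}$ is a graph with $O(N)$ vertices and, by the kissing bound, $O(N)$ edges; the orbifold Seifert--van Kampen theorem presents $\gC = \pi_1^{\mathrm{orb}}(Y)$ as the fundamental group of the associated graph of groups, whence
$$
d(\gC) \le \sum_{v} d(G_v) + \#\{\text{edges outside a spanning tree}\} = O(N) \le C(H)\cdot\vol(Y).
$$

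The main obstacle is the non-cocompact case: one must verify that each thin end (cusp) is correctly captured by a single chart whose boundary embeds into the thick cover, so that the glueing contributes only one additional van Kampen generator per edge rather than anything depending on the depth of the cusp. The rest is bookkeeping: all constants above—the Margulis constant $\varepsilon(H)$, Jordan's constant for finite subgroups of $K$, the ball volume $v(\varepsilon/4)$, and the kissing number $k(H)$—are computable explicitly in terms of the rank-one group $H$, so the resulting constant $C = C(H)$ is effectively computable as required.
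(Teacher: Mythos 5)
Your proposal takes a genuinely different route from the paper. The paper's proof deliberately \emph{avoids} the thick--thin decomposition and orbifold Seifert--van Kampen machinery: it first deletes the singular set $\tilde N$ (axes and fixed sets of elements with small displacement), which has codimension $\ge 2$ when $\dim X\ge 3$ so that $X\setminus\tilde N$ stays connected and $\Gamma$ acts \emph{freely} on it; it then constructs a proper $\Gamma$-invariant Morse function $\psi$ on $M\setminus N$ with no positive critical values and uses Morse theory to deformation-retract the entire noncompact, possibly thin-ended, orbifold complement onto a compact sublevel set $\psi_{\le a_0}$. The nerve argument is then applied to a cover of $\psi_{\le a_0}$ by metric balls that are \emph{convex}, so all nonempty intersections are contractible, giving a bona fide good cover. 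What the Morse retraction buys you is that Margulis tubes and cusps never appear as separate pieces in the cover: they are swept onto the thick part and you never need to count them, bound their generators, or worry about their topology. (The 2-dimensional case $H=\PSL_2(\BR)$, where the codimension argument for $\tilde N$ fails, is handled separately in Section 4 via the Fricke--Klein presentations.)

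Your approach keeps the thin pieces around as separate orbifold charts and must therefore establish three things that the paper's retraction sidesteps, and as written each has a gap. First, the claim that every $\varepsilon$-ball in $Y$ meets at most $k(H)$ connected components of $Y_{<\varepsilon}$ is not a straightforward packing statement: it requires the Margulis structure theorem for the thin part in pinched negative curvature (two nearby short loops generate a common virtually nilpotent subgroup, hence lie in the same Margulis region), and you appeal to it as though it were a kissing-number fact. Second, the nerve/graph-of-groups count $d(\Gamma)\le\sum_v d(G_v)+\#\{\text{edges off a spanning tree}\}$ requires all pairwise intersections $U_i\cap U_j$ to be path-connected; a convex $\varepsilon/2$-ball can intersect a Margulis tube or horoball quotient in several components (the lift of the thin piece is convex, but the intersection in the quotient is a union of several translates), and then the \v Cech nerve acquires extra edges which your bound does not account for. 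Third, and related, even granting the orbifold SvK decomposition, one has to verify that the presentation actually surjects onto $\pi_1^{\mathrm{orb}}(Y)$ with only those generators, which is exactly what the good-cover hypothesis in the paper is there to secure. None of these gaps is unfixable --- the thick--thin approach is classical and can be pushed through with more care --- but you should not present the ball/thin-component cover as a good cover, and you should spell out the Margulis structure theorem rather than attributing the bound to curvature pinching alone. The paper's removal of $\tilde N$ plus Morse retraction is precisely the device that lets one work with a genuine good cover of a compact set and bypass all three issues in one stroke.
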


The proof of Theorem \ref{thm:d(Gamma)} is geometric and valid for all
lattices, not necessarily arithmetic. Note that Theorem \ref{thm:d(Gamma)} implies
the celebrated Kazhdan--Margulis theorem \cite{KM} asserting that there is a
common lower bound on the covolumes of all lattices in $H$. Indeed, $d(\gC)\ge 2\Rightarrow \vol(\gC\backslash H)\ge\frac{2}{C}$. It also has several other applications, for instance, it gives a linear bound on the first Betti number of orbifolds in terms of their volume (cf. \cite{Fi-Gr} and see Remark \ref{rem:d(Gamma)} below and \cite{Ge:morse}).

Another essential component in our proofs is the following.

\begin{thm}\label{thm:counting-maximal}\label{thm:2.1}
Let $\mathrm{MAL}_H^u(x)$ (resp. $\mathrm{MAL}_H^{nu}(x)$) denote the
number of conjugacy classes of maximal uniform (resp. non-uniform)
irreducible arithmetic lattices of covolume at most $x$ in $H=\PGL_2(\BR)^a\times\PGL_2(\BC)^b$.
Then:
\begin{itemize}
 \item[(i)] There exists a positive constant $\ga=\ga(a,b)$, and for every $\epsilon>0$ a positive constant $\gb=\gb(\epsilon,a,b)$, such that
 $$
  x^\ga\leq \mathrm{MAL}_H^u(x)\leq x^{\gb(\log x)^\epsilon},~\text{for}~x\gg 0.
 $$
 \item[(ii)] There exist positive constants $\ga'=\ga'(a,b)$ and $\gb'=\gb'(a,b)$ such that
 $$
  x^{\ga'}\leq {\mathrm{MAL}_H^{nu}(x)}\leq {x^{\gb'}},~\text{for}~x\gg 0.
 $$
\end{itemize}
\end{thm}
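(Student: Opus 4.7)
The strategy is to parametrize irreducible maximal arithmetic lattices in $H=\PGL_2(\BR)^a\times\PGL_2(\BC)^b$ via their commensurability invariants: a number field $k$ together with a quaternion algebra $D/k$ whose ramification realizes the prescribed archimedean signature, and an adelic choice of local data specifying a maximal arithmetic overgroup inside a given commensurability class. Irreducibility forces $k$ to have exactly $a+s$ real and $b$ complex places, with $D$ ramified at $s$ of the real places and at some finite set $S\subset\mathrm{Spec}(\OO_k)$ of even cardinality (modulo the parity from archimedean ramification); $\Gamma$ is non-uniform iff $D\cong M_2(k)$, forcing $s=0$ and $|S|=0$. Borel's volume formula \cite{Bo} gives
\[
\mu(\Gamma\backslash H) \asymp C(a,b)\, d_k^{3/2}\,\zeta_k(2)\prod_{\mathfrak{p}\in S}(N\mathfrak{p}-1),
\]
up to a factor bounded by a divisor-type function of the ramification ideal that accounts for the position of the maximal $\Gamma$ inside its commensurability class.

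\textbf{Lower bounds.} For (i), I fix once and for all a number field $k_0$ with at least $a+2$ real and $b$ complex places, and construct division algebras $D$ over $k_0$ ramified at two fixed real places together with a variable finite set $S$ of primes of $\OO_{k_0}$ of even size. The number of such $S$ with $\prod_{\mathfrak{p}\in S}N\mathfrak{p}\leq y$ is $\gg y/(\log y)^{C}$ (essentially the density of squarefree ideals, via the prime ideal theorem in $k_0$), and each such $S$ yields a distinct commensurability class of uniform arithmetic lattice with covolume $\asymp y$. Choosing $y\asymp x$ produces $\mathrm{MAL}_H^u(x)\geq x^{\alpha}$ for some $\alpha=\alpha(a,b)>0$. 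For (ii), I vary $k$ through a polynomially growing family of fields of the correct fixed signature $(a,b)$ and bounded degree (e.g.\ by ramifying at an increasing set of rational primes); the covolume of the corresponding Hilbert/Bianchi-type lattice is controlled by $d_k^{3/2}\zeta_k(2)$, giving $x^{\alpha'}$ distinct classes of bounded covolume.

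\textbf{Upper bounds.} The Odlyzko lower bound on the root discriminant gives $d_k\geq c^{[k:\BQ]}$ with $c>1$, while the volume formula together with the trivial bound $\zeta_k(2)\geq 1$ forces $d_k\leq x^{2/3}$. Hence $[k:\BQ]\ll\log x$, and Hermite--Minkowski bounds the number of admissible $k$ polynomially in $x$. For each such $k$, the number of admissible ramification sets $S$ with $\prod_{\mathfrak{p}\in S}(N\mathfrak{p}-1)\leq x/(Cd_k^{3/2}\zeta_k(2))$ is bounded by the number of squarefree ideals of $\OO_k$ of norm $\leq x$, namely $O\bigl(x(\log x)^{[k:\BQ]-1}\bigr)$. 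The number of maximal arithmetic overgroups within a commensurability class is at most $2^{|S|}\leq x^{o(1)}$. Multiplying and summing over $k$ produces the $x^{\beta(\log x)^{\epsilon}}$ bound of (i). In case (ii), $D=M_2(k)$ is forced and the signature of $k$ is rigidly determined as $(a,b)$, so $[k:\BQ]=a+2b$ is fixed; the count collapses to the number of fields of bounded degree with $d_k\leq x^{2/3}$, times a divisor-function factor for the maximal overgroups, yielding $x^{\beta'}$.

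\textbf{Main obstacle.} The principal technical challenge is absorbing the interaction between (a) the number of number fields $k$ of bounded discriminant, (b) the divisor-type count of finite ramification sets, and (c) the $2^{|S|}$ factor for maximal overgroups, without letting the exponent of $x$ blow up. The tension is that $|S|$ can be as large as $\log x/\log\log x$, and $[k:\BQ]$ as large as $\log x$, so the combinatorial product of these contributions flirts with super-polynomial growth; the subpolynomial factor $(\log x)^{\epsilon}$ is precisely what survives after applying a Brauer--Siegel-type lower bound for $\zeta_k(2)$ to offset the accumulated losses. The non-uniform case avoids this difficulty entirely because the field signature is rigid and $D$ is determined, which is why a clean polynomial bound is available there.
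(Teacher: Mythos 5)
Your overall architecture --- Borel's parametrization of commensurability classes by pairs $(k,A)$, then counting fields, then quaternion algebras, then maximal overgroups --- matches the paper's, and your upper bound for part (ii) and your lower bound for part (i) (varying the algebra $A$) are essentially right; the paper even mentions the varying-$A$ argument as a valid alternative for (i). But there are two genuine gaps.

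The first is in the degree bound. You propose to bound $[k:\BQ]$ by pairing the Odlyzko lower bound $\Delta_k\geq c^{[k:\BQ]}$ with the assertion that the volume formula and the trivial inequality $\zeta_k(2)\geq 1$ force $\Delta_k\leq x^{2/3}$. That last step does not hold: in Borel's formula the denominator contains $(4\pi^2)^{d_k}$ together with a factor bounded only by $2^{d_k+|\ram_f(A)|}h_k$, and $h_k$ can be as large as roughly $\Delta_k^{1/2+o(1)}$. After absorbing $h_k$ via Brauer--Siegel one still needs the root discriminant to exceed roughly $\bigl(\tfrac{4}{3}\pi^3\bigr)^2\approx 1700$ for your argument to yield $d_k=O(\log x)$, which is far beyond what Odlyzko supplies (the totally-real Odlyzko bounds lie in the range of a few tens). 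Without bounding $d_k$ first you cannot read off a discriminant bound from the covolume, and your argument is circular. The paper's proof instead invokes the Chinburg--Friedman inequality $\covol(\Gamma) > 0.69\exp\bigl(0.37d_k - 19.08/h(k,2,A)\bigr)$ from \cite[Lemma 4.3]{CF} to bound $d_k$ directly, and only then controls $\Delta_k$ via the Borel--Prasad class-number bound. This is a genuinely deep, tailor-made input, not a soft corollary of standard discriminant estimates, and it is the step your argument is missing.

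The second gap is in your lower bound for (ii). You vary $k$ through a family of number fields of fixed signature $(a,b)$. But when $a=1$, $b=0$ --- that is, $H=\PGL_2(\BR)$ --- a non-uniform arithmetic lattice forces $k=\BQ$ and $A=M_2(\BQ)$, so there is a single commensurability class and no field to vary. The paper instead fixes $(k,A,\mathfrak{D})$ and produces polynomially many non-conjugate maximal $\Gamma_{S,\mathfrak{D}}$ inside that one commensurability class, by varying a rational prime $p$ and taking $S=S_p$ to be the set of finite places at which $p$ has odd valuation; Proposition \ref{proposition:2.19} then controls the covolumes. That argument is uniform across (i) and (ii) and is the one you should use.
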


Some yet unproved number-theoretic conjectures imply that a polynomial upper bound is true also in the first case (see \cite{Be}).

Theorem \ref{thm:2.1} was proved in \cite{Be} for higher absolute rank groups but
the cases of $\PSL_2(\BR)$ and $\PSL_2(\BC)$ which are the most crucial for us were left open;
in particular, Theorem \ref{thm:2.1} answers a question from \cite{Be}.

The general strategy of the proof of  Theorem \ref{thm:counting-maximal}
is similar to \cite{Be} but some special considerations are needed for
small rank. (Note that in \cite{Be} the proof is easier for very high
rank - see Proposition 3.3  there). The seminal work of Borel \cite{Bo}
which gives a detailed description of the maximal arithmetic lattices
in $\PGL_2(\BR)^a\times\PGL_2(\BC)^b$ combined with some ideas of
Chinburg and Friedman \cite{CF} enables us to
prove it. Various number theoretic estimates are
needed along the way.

The fact that the number of maximal arithmetic lattices grows slowly
reduces the problem to the subgroup growth of a given such maximal
lattice $\Gamma$.

Recall now that for any finitely generated group $\G$, we have
$s_n(\G) \le (n!)^{d(\G)}$, where $s_n(\G)$ denotes the number of
subgroups of index at most $n$ in $\G$. This, combined with
Theorems \ref{thm:d(Gamma)} and \ref{thm:counting-maximal}, proves Theorem
\ref{thm:upperbound}. The proof of the precise bound in
Theorem \ref{thm:SL(2,R)} requires more.

For $\PSL_2(\BR)$ the "miracle" is that
the covolume of $\gC$ and the subgroup growth of $\gC$ are both
controlled by $\chi(\gC)$ -- the Euler characteristic of $\gC$.
The covolume is $-2\pi\chi(\Gamma)$ by the Gauss-Bonnet formula, and the number $s_n(\Gamma)$
of subgroups of index at most $n$ in $\Gamma$ is $n^{(-\chi(\Gamma) + o(1))n}$,
as was proved by Liebeck and Shalev \cite{LiSh}.
Thus the contribution of $\Gamma$ to $\mathrm{AL}_H(x)$ is roughly

\[
s_{\frac{x}{-2\pi\chi(\Gamma)}}(\Gamma)={(\frac{x}{-2\pi\chi(\Gamma)})}^{(-\chi(\Gamma)\frac{1}{-2\pi\chi(\Gamma)}+o(1))x} = x^{(\frac{1}{2\pi} + o(1))x},
\]
which, on the face of it, proves Theorem \ref{thm:SL(2,R)}.
However, there is a delicate point here: the behavior of the error
term $o(1)$ above depends on $\Gamma$.
This can be a serious problem: the issue is illustrated in \cite{BL} where
it is shown, in contrary to a conjecture from
\cite{BGLM} and \cite{GLNP}, that for high rank Lie groups the growth of the total
number of arithmetic lattices is strictly faster
than those arising from finite index subgroups of a given lattice.
Our next result yields a uniform bound on the subgroup growth of Fuchsian groups
and overcomes this difficulty:

\begin {thm}\label{thm:nsubgroups}\label{uniform}
There exists an absolute constant $c$
such that for every Fuchsian group $\Gamma$ and for every $n\in\BN$
we have
$$
s_n(\Gamma)\leq (cn)^{-\chi(\Gamma)n}.
$$
\end{thm}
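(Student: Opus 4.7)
The plan is to convert the subgroup count into a count of permutation representations. The standard coset-action bound
\[
s_n(\Gamma) \;\le\; \frac{|\mathrm{Hom}(\Gamma, S_n)|}{(n-1)!}
\]
reduces the theorem to showing $|\mathrm{Hom}(\Gamma, S_n)| \le (n-1)! \cdot (cn)^{-\chi(\Gamma) n}$ with $c$ absolute, uniform in the signature of $\Gamma$. So the whole proof is really about producing such a uniform bound on $|\mathrm{Hom}(\Gamma, S_n)|$.

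First I would dispose of the non-cocompact case, which is essentially combinatorial. If $\Gamma$ has signature $(g; m_1, \dots, m_k; s)$ with $s \ge 1$ cusps, the long relation eliminates one parabolic generator and realises $\Gamma$ as the free product $F_{2g+s-1} \ast (\BZ/m_1) \ast \cdots \ast (\BZ/m_k)$. Hence $|\mathrm{Hom}(\Gamma, S_n)| = (n!)^{2g+s-1}\prod_i N_{m_i}(n)$, where $N_m(n) := |\{\sigma \in S_n : \sigma^m = 1\}|$. An elementary generating-function estimate gives $N_m(n) \le (mn)^{(1-1/m)n}$, and combined with Stirling this already yields the theorem, since $-\chi(\Gamma) = 2g+s-2+\sum_i(1-1/m_i)$.

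For the cocompact case, with signature $(g; m_1, \dots, m_k)$, the plan is to apply the Frobenius character formula
\[
|\mathrm{Hom}(\Gamma, S_n)| \;=\; (n!)^{2g-1+k}\sum_{\chi \in \mathrm{Irr}(S_n)} \chi(1)^{2-2g-k}\prod_{i=1}^k \pi_{m_i}(\chi),
\]
where $\pi_m(\chi) := (n!)^{-1}\sum_{\sigma^m=1}\chi(\sigma)$. After dividing by $(n-1)!$ and invoking Stirling, the combinatorial prefactor accounts for $(n!)^{2g-2+k}$, leaving a deficit of $(n!)^{\sum 1/m_i}$ to reach the target $(n!)^{-\chi(\Gamma)}$. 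This deficit must come from the character sum. The key inputs are the sharp character-value estimates for $S_n$ in the spirit of Fomin--Lulov and Larsen--Shalev, which give $|\pi_m(\chi)| \le \chi(1)^{-1/m}$ up to a correction that is subexponential in $n$ and independent of $m$. Substituting these, the surviving sum $\sum_\chi \chi(1)^{(2-2g-k)-\sum 1/m_i}$ is a value of the Witten zeta function of $S_n$ at a positive argument, and is controlled by the uniform bounds of Liebeck--Shalev.

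The main obstacle is genuine uniformity of the constant $c$ in the signature. The Liebeck--Shalev asymptotic $s_n(\Gamma) = n^{(-\chi(\Gamma)+o(1))n}$ is not enough by itself, since its $o(1)$ depends on $\Gamma$ and, in particular, may deteriorate as the exponents $m_i$ grow. The whole point of the route above is that each ingredient---the character bound on $\pi_m(\chi)$ and the zeta-sum estimate---can be stated with signature-free constants depending only on $\chi(1)$ and $m$, so all implicit errors can be absorbed into a single absolute $c$.
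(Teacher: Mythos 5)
Your route is essentially the paper's: reduce $s_n(\Gamma)$ to $|\mathrm{Hom}(\Gamma,S_n)|$, handle the $s+t>0$ case via the free product decomposition, and in the cocompact case use the Frobenius formula together with Fomin--Lulov character bounds and the Liebeck--Shalev character-degree zeta estimate. The only cosmetic difference is that you work directly with $\pi_m(\chi)=(n!)^{-1}\sum_{\sigma^m=1}\chi(\sigma)$ whereas the paper first fixes a tuple of conjugacy classes $\mathbf{C}$, bounds $|\mathrm{Hom}_{\mathbf C}(\Gamma,S_n)|$, and then sums over the (at most $p(n)^d$) choices of $\mathbf{C}$.

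The genuine gap is in the uniformity you wave at in the last paragraph. Every elliptic generator contributes a multiplicative error that is \emph{exponential} in $n$ (not merely subexponential: Fomin--Lulov gives a $(2e)^n$-type factor, and summing over the $\le p(n)$ conjugacy classes of elements of order dividing $m_i$ adds another $c^{\sqrt n}$). With $k$ elliptic generators the accumulated error is of shape $C^{kn}$, and to absorb this into $(cn)^{\mu n}$ with an \emph{absolute} $c$ you must know $k=O(\mu)$. This is exactly the step the paper supplies and you omit: each $1-1/m_i\ge 1/2$ forces $\mu\ge -2+k/2$, so $k\le 2\mu+4$, and then Siegel's lower bound $\mu\ge 1/42$ turns the additive constant $4$ into a multiple of $\mu$, yielding $k\le 212\mu$ (say). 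Without this, "all implicit errors can be absorbed into a single absolute $c$" is simply an unsupported assertion, and as the constants stand it is false. Two subsidiary slips feed the same problem: (i) your stated character bound $|\pi_m(\chi)|\le\chi(1)^{-1/m}$ has the wrong sign and, more importantly, is missing the crucial $(n!)^{-1/m}$ decay --- the correct shape is $|\pi_m(\chi)|\lesssim (n!)^{-1/m}\chi(1)^{1/m}$, and without the $(n!)^{-1/m}$ there is nothing to cancel the "deficit" $(n!)^{\sum 1/m_i}$ you identify, so the argument would give only $s_n(\Gamma)\lesssim (cn)^{(2g-2+k)n}$; (ii) in the free-product case, $N_m(n)\le (mn)^{(1-1/m)n}$ has an $m$ inside the base, and since the $m_i$ are unbounded this again destroys the absoluteness of $c$; you need the $m$-free estimate $N_m(n)\le C^n\, n^{(1-1/m)n}$, which follows from $\frac{n!}{\prod a_d!}\le e^n n^{n-B}$ with $B\ge n/m$.
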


The proof is a modification of the one given in \cite{LiSh}, it relies
heavily on bounds for the character values of symmetric groups.

Theorems \ref{thm:2.1} and \ref{thm:nsubgroups} imply the upper bound
in Theorem \ref{thm:SL(2,R)} (see Section \ref{sec:proof}). The lower bounds in
Theorems \ref{thm:lowerbound} and \ref{thm:SL(2,R)} are proved by analyzing
the subgroup growth of specific lattices. There is one delicate point
which has to be considered here: finite index subgroups of $\G$ may be conjugate
in $H$ without being conjugate in $\G$. An argument which uses the congruence
topology of $\G$ solves this problem and provides the lower bounds in all cases
(see Section \ref{sec:proof}).

\medskip
\noindent {\it Note added in proof:} Recently A. Eisenmann \cite{E} proved an analogous result for $H=\PSL_2(k)$, where $k$ is a $p$-adic field. His result says that if $k$ does not contain $\zeta +\zeta^{-1}$ (where $\zeta$ is the $p$'th root of unity) then $\lim\frac{\text{AL}_H(x)}{x\log x}=q-1$, where $q$ is the order of the residue field of $k$ (here $\mu$ is normalized to give value $1$ for the maximal compact subgroup of $H$.)


\section{On the number of generators of a lattice}\label{sec:d(Gamma)}

In this section we give a proof of Theorem \ref{thm:d(Gamma)}.
Since the center of $H$ is finitely generated, replacing $H$ by its adjoint group we may assume it is center free.
We will assume below that $H$ is not locally isomorphic to $\PSL_2(\BR)$. For $H=\PSL_2(\BR)$ the theorem follows easily from the Gauss--Bonnet formula and the explicit presentation of lattices there (cf. Section \ref{sec:Fushcian}).

Let $K<H$ be a maximal compact subgroup and $X=H/K$ the associated rank $1$ Riemanian symmetric space. Then $H$ is a connected component of the group of isometries of $X$, and by our assumption $\dim X\ge 3$.
For $g\in H,~x\in X$ we denote by
$$
 d_g(x)=d(x,g\cdot x)
$$
the displacement of $g$ at $x$, and by
$$
 \Min(g):=\{x\in X:d_g(x)=\inf(d_g)\}
$$
the (possibly empty) set where $d_g$ attains its infimum.
Recall that there are $3$ types of isometries $g$ of $X$:
\begin{itemize}
\item $g$ is  {\it  elliptic} if it has a fixed point in $X$. In this case the set of fixed points $\mathrm{Fix}(g)=\Min(g)$ is a totally geodesic submanifold.
\item $g$ is {\it hyperbolic} if $\Min(g)\ne\emptyset$ but $\mathrm{Fix}(g)=\emptyset$. In this case $\Min(g)$ is an infinite two sided geodesic, it is called the {\it axis} of $g$.
\item $g$ is {\it parabolic} if $\Min(g)=\emptyset$. In this case $g$ has a unique fixed point $p$ at the visual boundary $\partial X$ of $X$, $\inf(d_g)=0$ and a geodesic $c:\BR\to X$ satisfies $c(\infty)=p$ if and only if $\lim_{t\to\infty}d_g(c(t))=0$.
\end{itemize}
Moreover, if $g_1,\ldots,g_k\in H$ are commuting elements which are simultaneously elliptic (resp. hyperbolic, resp. parabolic), then they have a common fixed point (resp. axis, resp. fixed point at $\partial X$).

Recall the classical Margulis lemma (cf. \cite[Chapter 4]{Th}):

\begin{lem}\label{lem:margulis}
There is a constant $\gep_H>0$, depending on $H$, such that if $\gL < H$ is a discrete group generated by $\{\gc\in\gL:d_\gc(x)\le\gep_H\}$ for some $x\in X$, then $\gL$ virtually nilpotent.
\end{lem}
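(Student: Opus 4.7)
The plan is to follow the classical Kazhdan--Margulis strategy: combine a Zassenhaus-type commutator contraction with Jordan's theorem on finite subgroups of compact Lie groups. By homogeneity of $X = H/K$ I may assume $x = x_0 := eK$, so the generating set
\[
S := \{\gamma \in \Lambda : d_\gamma(x_0) \le \varepsilon_H\}
\]
lies in a small compact neighborhood of $K$ in $H$. Via the Cartan decomposition $\gamma = k_1 \exp(v) k_2$ with $\|v\| = d_\gamma(x_0)$, each such $\gamma$ carries an ``elliptic part'' in $K$ and a ``translational part'' of norm at most $\varepsilon_H$; the goal is to choose $\varepsilon_H$ so small that $\Lambda = \langle S \rangle$ admits a nilpotent subgroup of index bounded in terms of $H$ alone.

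The quantitative heart of the argument is a Zassenhaus-type commutator contraction estimate, derived from the Baker--Campbell--Hausdorff formula applied in a chart around $K$: there exist constants $c_H, \delta_H > 0$ such that whenever $g, h \in H$ have $d_g(x_0), d_h(x_0) < \delta_H$, the translational part of $[g,h]$ is bounded by $c_H \cdot d_g(x_0) \cdot d_h(x_0)$. Choosing $\varepsilon_H < \min(\delta_H, 1/(2c_H))$ forces nested commutators of elements of $S$ to have translational parts shrinking geometrically. Since $\Lambda$ is discrete and the $\varepsilon_H$-tube around $K$ in $H$ is precompact, this descending sequence of displacements cannot produce infinitely many distinct elements of $\Lambda$, so after a bounded number of commutator iterations every nested commutator lies in the pointwise stabilizer $F := \Lambda \cap K$.

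The main obstacle, and the precise reason the conclusion only yields ``virtually'' nilpotence, is the stabilizer $F$: its elements have zero displacement, so the commutator estimate says nothing about them. Here Jordan's theorem intervenes: $F$ is a discrete subgroup of the compact Lie group $K$, hence finite, and therefore contains an abelian normal subgroup $A \triangleleft F$ of index at most some $J(H)$ depending only on $H$. I would then pass to the finite-index subgroup $\Lambda^\circ \le \Lambda$ whose elliptic components all lie in $A$ (formally, intersect $\Lambda$ with the pullback of $A$ through the nearest-point projection from the $\varepsilon_H$-tube onto $K$, which is well-defined after further shrinking $\varepsilon_H$). The generators of $\Lambda^\circ$ inherit the commutator contraction; their iterated commutators first fall into the abelian $A$ and then collapse to the identity by the geometric decrease of translational parts together with discreteness. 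This exhibits a nilpotent subgroup of $\Lambda$ of finite index bounded in terms of $H$, proving the lemma.
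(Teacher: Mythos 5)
The paper does not prove this lemma: it is invoked as ``the classical Margulis lemma'' with a reference to Chapter~4 of Thurston's book, so there is no in-paper argument to compare against. That said, your attempted proof follows the right general philosophy (Zassenhaus-type commutator contraction plus Jordan's theorem) but has two genuine gaps.

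The commutator estimate in your second paragraph is false as stated. You assert that if $d_g(x_0), d_h(x_0) < \delta_H$ then the translational part of $[g,h]$ is bounded by $c_H\, d_g(x_0)\, d_h(x_0)$. Test this with $g = k \in K$ (so $d_g(x_0)=0$) and $h = \exp(v)$ with $v \in \mathfrak{p}$ small. Then $[g,h] = \exp(\mathrm{Ad}(k)v)\exp(-v)$, whose displacement at $x_0$ is of order $\|\mathrm{Ad}(k)v - v\|$, which is comparable to $\|v\| = d_h(x_0)$ when $k$ rotates $v$ by a definite angle --- not quadratically small, and certainly not zero as your inequality would force. The genuine Zassenhaus contraction requires both elements to be close to the \emph{identity}, not merely close to $K$: elements with small displacement can have arbitrarily large elliptic part, and that elliptic part does not contract under commutator. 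Handling this is precisely where the actual proof (Thurston, Kazhdan--Margulis, Ballmann--Gromov--Schroeder) has to do real work --- e.g., covering the $\varepsilon$-tube around $K$ by finitely many translates of a genuine Zassenhaus neighborhood of $e$ and doing a counting/pigeonhole argument, or quotienting out by rotational parts in a more careful way --- and your sketch skips it.

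Your final step is also not well posed. The set $\Lambda^\circ$ you describe --- elements whose ``elliptic component'' (nearest-point projection to $K$) lands in the Jordan subgroup $A \triangleleft F$ --- is not a subgroup, because nearest-point projection $H \to K$ is not a homomorphism (it fails already to respect products of two elements near $K$ with noncommuting $K$-parts), nor is it even defined coherently on all of $\Lambda$, only on the $\varepsilon$-tube. So you cannot ``pass to'' it. The standard way to make the Jordan step work is different: one first produces from the Zassenhaus neighborhood a finite-index subgroup generated by elements genuinely near $e$, and applies Jordan/finiteness considerations before, not after, the contraction argument. As written, the proposal does not close, though the two ingredients you reach for (Zassenhaus and Jordan) are indeed the correct ones.
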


Fix once and for all
$$
 \gep\le\min\{\frac{\gep_H}{10},1\}.
$$

Let $\gC$ be a lattice in $H$. Denote by $M=\gC\backslash X$ the corresponding orbifold, and by $\pi: X\to M$ the canonical (ramified) covering map. For a subset $Y\subset M$ let $\ti Y=\pi^{-1}(Y)$ be its preimage in $X=\ti M$.

Let
$$
 \ti N:=\cup\{ \Min(\gc):\gc\in\gC\setminus\{1\},~\inf d_{\gc}<\gep\}.
$$

Since $\gC$ acts properly discontinuously on $X$, $\ti N$ is a locally finite union of the sets $\Min (\gc)$.
Note that since $H$ is connected, any $g\in H$ preserves the orientation of $X$, and in particular if $g$ is elliptic $\mathrm{codim}_X(\mathrm{Fix}(g))\ge 2$. As $\ti N$ is a union of geodesics (axes of hyperbolic elements) and fixed sets of elliptic elements, the assumption that $\dim X\ge 3$ implies that $\mathrm{codim}_X(\ti N)\ge 2$. It follows that $X\setminus\ti N$ is connected.

For $\gc\in\gC\setminus\{ 1\}$ and $x\in X\setminus\ti N$ set
$$
 d_\gc'(x)=d_\gc(x) -\inf d_\gc.
$$

Let $f:\BR^{>0}\to\BR^{\ge 0}$ be a smooth function which tends to $\infty$ at $0$, strictly decreases on $(0,\gep]$ and is identically $0$ on $[\gep,\infty)$, and set for $x\in X\setminus\ti N$
$$
 \ti\psi(x):=\sum_{\gc\in\gC\setminus\{ 1\},~\inf d_\gc\le\gep}f(d'_\gc(x)).
$$
since $f(d'_\gc(x))\ne 0\Rightarrow {d_\gc}(x)\le 2\gep$ and $\gC$ is discrete, there are only finitely many non-zero summands for each $x$.
Thus $\ti\psi$ is a well defined $\gC$ invariant smooth function on $X\setminus \ti N$. Note that $\ti\psi(x)$ tends to $\infty$ as $x$ approaches $\ti N$. Let $\psi:M\setminus N\to\BR^{\ge 0}$ be the induced function on $M\setminus N$.

For $a\ge 0$, set
$$
 \ti \psi_{\le a}:=\{x\in X\setminus \ti N: \ti\psi(x)\le a\},
$$
and $\psi_{\le a}=\pi(\ti \psi_{\le a})=\{x\in X\setminus \ti N: \psi(x)\le a\}$. Note that $d_\gc(x)\ge\gep$ for every $x\in \ti \psi_{\le 0}$ and $\gc\in \gC\setminus\{ 1\}$. Thus, the injectivity radius of $M$ at any point in $\psi_{\le 0}$ is at least $\frac{\gep}{2}$.

\begin{lem}\label{lem:gradient}
For $x\in X\setminus\ti N$, the gradient $\nabla \ti\psi(x)=0$ if and only if $\ti\psi(x)=0$.
\end{lem}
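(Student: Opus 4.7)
The easy direction $\ti\psi(x)=0\Rightarrow\nabla\ti\psi(x)=0$ is immediate: if $\ti\psi(x)=0$ then $d'_\gc(x)\ge\gep$ for every $\gc$ contributing to the defining sum, and since $f$ is smooth with $f\equiv 0$ on $[\gep,\infty)$ we have $f'(d'_\gc(x))=0$ for each such $\gc$, so $\nabla\ti\psi(x)=\sum_\gc f'(d'_\gc(x))\nabla d_\gc(x)$ vanishes term by term.

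For the converse, assume $\ti\psi(x)>0$ and set
$$
 S(x):=\{\gc\in\gC\setminus\{1\}:\inf d_\gc\le\gep,\ 0<d'_\gc(x)<\gep\},
$$
which is finite and nonempty. Every $\gc\in S(x)$ satisfies $d_\gc(x)<2\gep\le\gep_H$, so Lemma \ref{lem:margulis} forces $\gL:=\langle S(x)\rangle$ to be virtually nilpotent. The plan is to produce a single unit vector $v\in T_x X$ with $\langle\nabla d_\gc(x),v\rangle<0$ for every $\gc\in S(x)$; since $f'(d'_\gc(x))<0$ on $(0,\gep)$, this yields $\langle\nabla\ti\psi(x),v\rangle>0$, forcing $\nabla\ti\psi(x)\ne 0$.

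To produce $v$, apply the rank-one classification of discrete virtually nilpotent isometry groups to $\gL$: either (a) $\gL$ is finite and hence, by Cartan's fixed-point theorem, fixes some $y\in X$; (b) $\gL$ is parabolic with a unique common fixed point $\xi\in\partial X$; or (c) $\gL$ is virtually cyclic hyperbolic, preserving a bi-infinite geodesic $\ell\subset X$ (possibly swapping its endpoints at infinity). Let $v\in T_x X$ be the unit tangent at $x$ pointing toward this preserved object: $-\nabla d(\cdot,y)(x)$ in (a), the tangent at $x$ to the geodesic ray ending at $\xi$ in (b), and $-\nabla d(\cdot,\ell)(x)$ in (c). The first-variation formula reads $\nabla d_\gc(x)=-(u_\gc+u_\gc^*)$, where $u_\gc$ and $u_\gc^*$ are the unit tangents at $x$ towards $\gc x$ and $\gc^{-1}x$. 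Combining this with the strict convexity of the distance to $y$ or $\ell$ in the CAT($-1$) space $X$ (respectively, the strict monotonicity of $d_\gc$ along a geodesic ray ending at $\xi$) and with the fact that $x\notin\ti N$, so $\gc x\ne x$ and $x$ lies off the preserved object, yields $\langle\nabla d_\gc(x),v\rangle<0$ for every $\gc\in S(x)$.

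The main obstacle is the structural step: Margulis' lemma only gives virtual nilpotence of $\gL$, not a common fixed geometric object, so one must invoke the rank-one classification and treat each of the three cases uniformly. Case (c) requires a little extra care when an element of $S(x)$ swaps the two endpoints of $\ell$, but the function $d(\cdot,\ell)$ is invariant under such swaps, so the sign of $\langle\nabla d_\gc(x),v\rangle$ is unaffected.
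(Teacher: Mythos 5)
The overall architecture matches the paper's: produce a single direction at $x$ along which every summand of $\ti\psi$ is strictly decreasing, using Margulis' lemma to reduce to a finite/parabolic/axial trichotomy and then using negative-curvature convexity (your first-variation computation is correct). The paper derives the trichotomy by hand from a nontrivial central element of the nilpotent subgroup, while you invoke the standard classification of discrete elementary groups in rank one; that substitution is harmless.

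There is, however, a genuine gap in case (c). You write that $x\notin\ti N$ implies ``$x$ lies off the preserved object,'' but this does not follow. The geodesic $\ell$ preserved by $\gL$ need not be $\Min(\gc)$ for any $\gc\in S(x)$: if every generator in $S(x)$ acts on $\ell$ as an order-two flip (an elliptic isometry whose fixed locus meets $\ell$ in a single point), then $\ell$ is not the minimal set of any of them, and the inclusion $\Min(\gc)\subset\ti N$ gives you nothing about whether $x\in\ell$. Yet $x\notin\ell$ is essential: otherwise $-\nabla d(\cdot,\ell)(x)$ is undefined (or zero) and your proposed $v$ does not exist. This is precisely the delicate point the paper addresses: if $x\in\ell$ then all of $S(x)$ must be elliptic flips of $\ell$, and since $\gL$ is infinite there are two of them, $\ga_1,\ga_2$, with distinct fixed points on $\ell$; their product $\ga_1\ga_2$ is a translation along $\ell$ of displacement comparable to $d_{\ga_1}(x)+d_{\ga_2}(x)<2\gep$, which puts $\ell=\Min(\ga_1\ga_2)$ inside $\ti N$, contradicting $x\notin\ti N$. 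You need to include some such argument; merely noting that $d(\cdot,\ell)$ is invariant under flips handles a different (and easier) issue.
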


The proof relies on the following simple observations which follow directly from the fact that $X$ has strictly negative sectional curvature:
\begin{enumerate}
\item If $c_1(t),c_2(t)$ are two disjoint unit speed geodesics with $c_1(-\infty)=c_2(-\infty)$, then $d(c_1(t),c_2(t))$ is a strictly increasing smooth function of $t$, and hence $\frac{d}{dt}d(c_1(t),c_2(t))>0$ for every $t$. In particular, if $g\cdot p=p$ for some $g\in H,p\in\partial X$ and $c$ is a geodesic with $c(-\infty)=p$, then $\frac{d}{dt}d_g(c(t))>0$ for all $t$, provided $c$ is not $g$ invariant.

\item Suppose that $A\subset X$ is a closed convex set and let $P_A:X\to A$ be the nearest point retraction. Then $d(P_A(x),P_A(y))<d(x,y)$ for any $x,y\in X\setminus A$. We will say that a geodesic ray $c:[0,\infty)\to X$ starting at a point of $A$ is {\it perpendicular} to $A$ if $P_A(c([0,\infty))=c(0)$. Then if $c_1,c_2:[0,\infty)\to X$ are two different unit speed rays perpendicular to $A$, the function $d(c_1(t),c_2(t))$ is convex and strictly increasing. Hence if $c:[0,\infty)\to X$ is perpendicular to $A$ and $g$ is an isometry which leaves the set $A$ invariant but moves $c(t)$, then $\frac{d}{dt}d_g(c(t))>0$ for any $t>0$.
\end{enumerate}

\noindent Note that $(1)$ and $(2)$ fail to hold in higher rank symmetric spaces. In rank one both assertions follow for instance from the well known Flat Strip Theorem: Two parallel segments in a Hadamard manifold bound a flat parallelogram, in particular, if the space has strictly negative curvature, then any two parallel segments are contained in a common geodesic (cf. \cite[Corollary 5.8]{Bal}).

\begin{proof}[Proof of Lemma \ref{lem:gradient}]
Let $x$ be a point in $X\setminus (\ti N\cup\ti\psi_{\le 0})$. Let $\gS_x$ be the finite set of elements in $\gC\setminus\{1\}$ which contribute non-zero summands to the function $\ti\psi$, i.e. those $\gs$ for which $f(d'_\gs(x))>0$.
We will show that there is a geodesic $c$ through $x$ (say, $x=c(t_0)$) such that $\frac{d}{dt}|_{t_0}d_\gs(c(t))>0,~\forall\gs\in\gS_x$. This will imply that
$$
 \nabla\ti\psi (x)\cdot\dot c(t_0)=\sum_{\gs\in\gS_x}f'(d'_\gs(x))\frac{d}{dt}|_{t_0}d_\gs(c(t))<0,
$$
and in particular that $\nabla\ti\psi (x)\ne 0$.

Let $\gD_x=\langle\gS_x\rangle$. By Lemma \ref{lem:margulis}, $\gD_x$ admits a normal nilpotent subgroup $\gL_x$ of finite index in $\gD_x$.

Suppose first that $\gD_x$ is a finite group. Let $y$ be a common fixed point of $\gS_x$. Then as $d_\gs(x)>0=d_\gs(y)$ for all $\gs\in\gS_x$, we derive from observation $(2)$ above (by considering $\{ y\}$ as a closed convex set) that the geodesic pointing from $y$ to $x$ does the job.

Suppose now that $\gD_x$ is infinite. Let $\gc$ be a nontrivial central element of the nilpotent group $\gL_x$, and let $C_{\gD_x}(\gc)$ be the conjugacy class of $\gc$ in $\gD_x$. Then $|C_{\gD_x}(\gc)|\le [\gD_x:\gL_x]$, $C_{\gD_x}(\gc)$ is contained in the center of $\gL_x$ and consists of elements which are simultaneously elliptic, hyperbolic or parabolic. In the first case, let $A$ be the set of common fixed points of the elements of $C_{\gD_x}(\gc)$, in the second case let $A$ be the common axis, and in the third case let $p\in\partial X$ be the common fixed point at infinity. Note that in the second case, as the geodesic $A$ is $\gD_x$-invariant, we derive that $\gD_x$ consists of semisimple elements and $A=\Min(\ga)=\mathrm{axis}(\ga)$ for every $\ga\in\gD_x$ which is hyperbolic.
It follows that $A$ does not contain our point $x$, for otherwise $\gS_x$ must consist of elliptic elements preserving $A$ and as $\gD_x$ is infinite, there are $\ga_1,\ga_2\in\gS_x$ without a common fixed point. However, since the $\ga_i$ preserve $A$ and $d_{\ga_i}(x)<\gep$, the element $\ga_1\ga_2$ is hyperbolic with axis $A$ and displacement $<\gep$, a contradiction (either to the assumption that $x\in A$ or to the one that $\gS_x$ has no hyperbolic elements).
In the first two cases we can take $c$ to be the geodesic through $x$ perpendicular to the $\gD_x$-invariant closed convex set $A$, and in the third case we can take $c$ to be the geodesic through $x$ with $c(-\infty)=p$. The result follows from observations $(1)$ and $(2)$ above.
\end{proof}

Since $\lim_{t\to 0}f(t)=\infty$, it follows that for any finite value $a$, the injectivity radius is bounded from below on the closed set $\psi_{\le a}$. Hence $\psi_{\le a}$ is bounded, for otherwise it would admit an infinite $1$-discrete subset, yielding infinitely many disjoint embedded balls of a fixed radius in $M$, contradicting the finiteness of $\vol(M)$. Therefore $\psi_{\le a}$ is compact for any $a<\infty$. Thus the function $\psi$ is proper.
Applying standard Morse theory we get:

\begin{prop}\label{prop:retraction}
For every positive $a$, $\psi_{\le a}$ is a deformation retract of $M\setminus N$.
\end{prop}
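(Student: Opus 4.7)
The plan is to apply standard Morse theory to $\psi$. Three facts are in place: $\psi$ is smooth on $M\setminus N$, it is proper (by the argument just preceding the statement), and by Lemma~\ref{lem:gradient} its gradient vanishes only on $\psi_{\le 0}$, which is contained in $\psi_{\le a}$. In particular $\psi$ has no critical value in $(0,\infty)$, so the strategy is to flow along a suitably normalized negative gradient and thereby push every point of $M\setminus N$ into the sublevel set $\psi_{\le a}$ while fixing $\psi_{\le a}$ pointwise.

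First I would construct the vector field generating the flow. On the open set $\{\tilde\psi>0\}\subseteq X\setminus\tilde N$ define
\[
 V \;=\; -\frac{\nabla\tilde\psi}{\|\nabla\tilde\psi\|^{2}},
\]
which is well defined and $\Gamma$-invariant by Lemma~\ref{lem:gradient} and satisfies $V\cdot \nabla\tilde\psi \equiv -1$. To extend $V$ smoothly across $\tilde\psi_{\le 0}$ (where the raw formula blows up), I would multiply by a smooth $\Gamma$-invariant cutoff $\eta$ equal to $1$ on $\{\tilde\psi\ge a/2\}$ and to $0$ on $\{\tilde\psi\le a/4\}$, and set $V=0$ on $\tilde\psi_{\le 0}$. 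This descends to a smooth vector field on $M\setminus N$.

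Next I would verify completeness of the positive-time flow $\phi_t$ of $V$. On any compact sublevel set $\psi_{\le b}$ (compact by properness of $\psi$), the closed subset $\{a/4\le \psi\le b\}$ is disjoint from the zero set of $\nabla\psi$, so $\|\nabla\psi\|$ is bounded below there and $V$ is bounded; moreover $\psi$ is non-increasing along orbits, so orbits stay inside $\psi_{\le \psi(x_{0})}$ and cannot escape towards $N$ (since $\psi\to\infty$ as $x\to N$). Hence $\phi_{t}(x)$ exists for all $t\ge 0$, and on the region $\{\psi\ge a\}$ (where $\eta\equiv 1$) one has $\tfrac{d}{dt}\psi(\phi_{t}(x))=-1$. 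Setting
\[
 r_{s}(x) \;=\; \phi_{s\,T(x)}(x), \qquad T(x)\;=\;\max\{\psi(x)-a,\;0\},
\]
produces a continuous homotopy $r\colon[0,1]\times(M\setminus N)\to M\setminus N$ with $r_{0}=\mathrm{id}$, $r_{s}|_{\psi_{\le a}}=\mathrm{id}$ for all $s$, and $r_{1}(M\setminus N)\subseteq \psi_{\le a}$. This is exactly a deformation retraction of $M\setminus N$ onto $\psi_{\le a}$.

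The only delicate point is the smooth interpolation of $V$ across $\tilde\psi_{\le 0}$, where the naive normalized negative gradient diverges; the cutoff $\eta$ handles this cleanly while preserving $\Gamma$-invariance and keeping $V$ bounded on each compact sublevel set. Everything else (completeness of the flow, monotone decrease of $\psi$ along orbits, triviality of $r_{s}$ on $\psi_{\le a}$) follows directly from the three facts listed at the outset, so no further input beyond Lemma~\ref{lem:gradient} and properness of $\psi$ is needed.
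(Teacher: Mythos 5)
Your proof is correct and, at bottom, takes the same route as the paper: establish that $\psi$ is smooth and proper on $M\setminus N$ with no positive critical values (Lemma~\ref{lem:gradient}), and then produce the deformation retraction by flowing along a normalized negative gradient.  The paper simply cites Milnor's Theorem~3.1 at this point; you reprove that statement directly, which is the standard argument behind Milnor's theorem.  One small bonus of your treatment: Milnor's Theorem~3.1 is stated for a \emph{compact} slab $f^{-1}[a,b]$ with no critical points, so strictly speaking one needs either an exhaustion/limit argument or an explicit remark that properness supplies the required compactness on every orbit; your construction builds the complete global flow at once (every forward orbit stays in a compact sublevel set by properness and monotone decrease of $\psi$), so this issue never arises.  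The cutoff $\eta$ to tame the normalized gradient near $\{\tilde\psi\le 0\}$ is also the right fix, and since you take $\eta$ as a composition with $\tilde\psi$ it is automatically $\Gamma$-invariant and the flow stays in $\{\psi\ge a\}$ for the whole homotopy time $T(x)=\psi(x)-a$, so the cutoff never actually interferes with the retraction formula.  In short: same strategy, spelled out in full rather than by citation.
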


\begin{proof}
By Lemma  \ref{lem:gradient} the proper smooth function $\psi:M\setminus N:\to\BR^{\ge 0}$ has no positive critical values. Thus the proposition follows from
\cite[Theorem 3.1]{Milnor}.
\end{proof}

For $a>0$, we will denote by $r_a:M\setminus N\to\psi_{\le a}$ the retraction induced by the deformation retract supplied by Proposition \ref{prop:retraction}.

\begin{cor}
For every $a\ge 0$ the set $\ti\psi_{\le a}$ is non empty and connected.
\end{cor}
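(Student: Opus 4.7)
The plan is to reduce the claim to Proposition \ref{prop:retraction} by lifting its deformation retract from the base $M\setminus N$ up to the cover $X\setminus\ti N$.

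The key observation is that $\pi:X\setminus\ti N\to M\setminus N$ is an ordinary (regular) covering map with deck group $\gC$. Indeed, every non-trivial elliptic $\gc\in\gC$ satisfies $\inf d_\gc=0\le\gep$, so $\mathrm{Fix}(\gc)=\Min(\gc)\subset\ti N$; hence $\gC$ acts freely, as well as properly discontinuously, on $X\setminus\ti N$. Moreover $\ti N$ is $\gC$-invariant, since $\gC$ permutes the set of ``short'' elements by conjugation and $\Min(g\gc g^{-1})=g\cdot\Min(\gc)$, so $\pi(X\setminus\ti N)=M\setminus N$. The function $\ti\psi$ is the $\gC$-invariant lift of $\psi$, and $\ti\psi_{\le a}=\pi^{-1}(\psi_{\le a})$.

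For $a>0$ I would then take the deformation $F:(M\setminus N)\times[0,1]\to M\setminus N$ realizing $\psi_{\le a}$ as a deformation retract, supplied by Proposition \ref{prop:retraction}, and lift it via the homotopy lifting property of the covering to a homotopy $\ti F:(X\setminus\ti N)\times[0,1]\to X\setminus\ti N$ with $\ti F(\cdot,0)=\mathrm{id}$. Uniqueness of lifts makes $\ti F$ automatically $\gC$-equivariant, and since $\pi\bigl(\ti F(x,1)\bigr)=F(\pi(x),1)\in\psi_{\le a}$, the map $\ti F(\cdot,1)$ takes values in $\ti\psi_{\le a}$. Its image, being $\gC$-invariant with $\pi$-image equal to $\psi_{\le a}$, must coincide with $\pi^{-1}(\psi_{\le a})=\ti\psi_{\le a}$. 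Hence $\ti\psi_{\le a}$ is the continuous image of $X\setminus\ti N$, which was shown earlier in the text to be connected and non-empty (via $\mathrm{codim}_X(\ti N)\ge 2$), so the same holds for $\ti\psi_{\le a}$.

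The main subtle point will be the boundary case $a=0$, not directly supplied by Proposition \ref{prop:retraction} since $0$ is a critical value of $\psi$. I expect this requires a separate argument, for instance refining the Morse-theoretic setup of Proposition \ref{prop:retraction} to produce a deformation retract of $X\setminus\ti N$ onto $\ti\psi_{\le 0}$ directly, using Lemma \ref{lem:gradient} (which identifies the critical set as exactly $\ti\psi_{\le 0}$) together with a suitable reparameterization of the negative-gradient flow of $\ti\psi$, in which $\psi$ decreases monotonically while properness of $\psi$ keeps orbits in a compact subset of $M\setminus N$. Alternatively, one can write $\ti\psi_{\le 0}=\bigcap_{n\ge 1}\ti\psi_{\le 1/n}$ as a nested intersection of $\gC$-invariant connected non-empty sets and pass to the limit, exploiting the compactness of each $\psi_{\le 1/n}$ downstairs and the local finiteness of $\ti N$ within a fundamental domain.
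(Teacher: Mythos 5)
Your proposal follows essentially the same two-step route as the paper: for $a>0$, lift the deformation retraction of Proposition \ref{prop:retraction} from $M\setminus N$ to the covering space $X\setminus\ti N$ (you correctly verify that $\gC$ acts freely there, since the fixed-point sets of nontrivial elliptic elements lie in $\ti N$, and that uniqueness of lifts gives $\gC$-equivariance), and then observe that $\ti\psi_{\le a}$ is a retract of the connected set $X\setminus\ti N$; for $a=0$, pass to the descending intersection $\ti\psi_{\le 0}=\bigcap_{a>0}\ti\psi_{\le a}$. This is precisely what the paper does, with the paper being terser on both halves (it simply writes that the $a>0$ case ``immediately follows'' from the retraction, and that $a=0$ follows from the nested intersection). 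You are right to flag the $a=0$ step as the delicate one: a decreasing intersection of connected sets need not be connected unless some compactness is in play, and $\ti\psi_{\le a}$ is not compact upstairs; the paper's one-line justification leaves the same point implicit, relying on the compactness of $\psi_{\le a}$ in $M$ exactly as you suggest. Your first alternative (running the gradient flow all the way to the critical locus $\ti\psi_{\le 0}$ to get a deformation retraction directly) would in fact be a cleaner way to close this gap, but the route you settle on is the paper's.
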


\begin{proof}
For $a>0$ this immediately follows from Proposition \ref{prop:retraction}, and for $a=0$ it follows since $\ti\psi_{\le 0}=\bigcap\{\ti\psi_{\le a}:{a>0}\}$ a descending intersection of the sets $\psi_{\le a}$.
\end{proof}

Since $\ti\psi_{\le a}$ is $\gC$--invariant and $\gC$ acts freely on it, we conclude:

\begin{cor}
For every $a>0$, $\gC$ is a quotient of $\pi_1(\psi_{\le a})$ -- the fundamental group of $\psi_{\le a}=\gC\backslash\ti\psi_{\le a}$.
\end{cor}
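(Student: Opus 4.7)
The plan is to recognize the quotient map $\ti\psi_{\le a}\to \psi_{\le a}$ induced by $\pi$ as a regular covering with deck transformation group $\gC$, and then read off the conclusion from the standard short exact sequence
$$
1\to \pi_1(\ti\psi_{\le a})\to \pi_1(\psi_{\le a})\to \gC\to 1.
$$

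First I would verify that $\gC$ acts freely on $\ti\psi_{\le a}$. Hyperbolic and parabolic elements of $H$ have no fixed points in $X$, so only elliptic elements could produce nontrivial stabilizers; but for any elliptic $\gc\in \gC\setminus\{1\}$ the fixed-point set $\mathrm{Fix}(\gc)=\Min(\gc)$ satisfies $\inf d_\gc=0\le\gep$ and thus lies in $\ti N$ by the definition of $\ti N$. Since $\ti\psi_{\le a}\subset X\setminus\ti N$, no nontrivial $\gc$ stabilizes a point there. Proper discontinuity of the action is automatic because $\gC$ is discrete in $H$ and $\ti\psi_{\le a}$ is $\gC$-invariant. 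Finally, Lemma \ref{lem:gradient} tells us that every positive value of $\psi$ is a regular value, so for $a>0$ the sublevel set $\ti\psi_{\le a}$ is a smooth manifold with boundary $\{\psi=a\}$; combined with the preceding corollary this yields a connected, locally path-connected, semilocally simply connected space on which $\gC$ acts freely and properly discontinuously.

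These are exactly the hypotheses under which the projection onto $\gC\backslash \ti\psi_{\le a}=\psi_{\le a}$ is a normal covering map of connected, locally well-behaved spaces with deck group $\gC$. The induced long exact sequence in homotopy then collapses to the short exact sequence displayed above, exhibiting $\gC\cong\pi_1(\psi_{\le a})/\pi_1(\ti\psi_{\le a})$ as a quotient of $\pi_1(\psi_{\le a})$. The only step requiring some care is confirming the local topological regularity of $\ti\psi_{\le a}$ near its boundary $\{\psi=a\}$, which is ensured by Lemma \ref{lem:gradient}; the remainder is the standard covering-space dictionary.
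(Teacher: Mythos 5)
Your proof is correct and follows the same covering-space argument the paper has in mind: the paper simply asserts freeness of the $\gC$-action on $\ti\psi_{\le a}$ together with the connectedness established in the preceding corollary, and reads off the conclusion, while you supply the verification (elliptic fixed-point sets lie in $\ti N$, hence outside $\ti\psi_{\le a}$) and the regular-value/covering-space bookkeeping explicitly.
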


Now recall that the injectivity radius of $M$ at any point of $\psi_{\le 0}$ is at least $\gep/2$. Let $\mathcal{S}$ be a maximal $\gep/2$ discrete subset of $\psi_{\le 0}$. For $t>0$ denote by $\nu (t)$ the volume of a $t$--ball in $X$.
Since the $\gep/4$--balls centered at points of $\mathcal{S}$ are pairwise disjoint and isometric to an $\gep/4$--ball in $X$, the size of $\mathcal{S}$ is bounded by $\vol (M)/\nu(\frac{\gep}{4})$. Moreover, since $\mathcal{S}$ is maximal, the union of the $\gep/2$--balls centered at points of $\mathcal{S}$ covers $\psi_{\le 0}$. Denote this union by $U$.
Since $U$ is a neighborhood of the compact set $\psi_{\le 0}$ and $\psi$ is continuous, choosing $a_0>0$ sufficiently small, we have  $\psi_{\le a_0}\subset U$.

\begin{lem}
$\pi_1( \psi_{\le a_0})$ is a quotient of $\pi_1(U)$.
\end{lem}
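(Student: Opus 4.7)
The plan is to use Proposition \ref{prop:retraction} --- that $\psi_{\le a_0}$ is a deformation retract of $M\setminus N$ via some retraction $r:M\setminus N\to\psi_{\le a_0}$ --- together with the chain of inclusions
\[
\psi_{\le a_0}\;\stackrel{j}{\hookrightarrow}\;U\;\stackrel{k}{\hookrightarrow}\;M\setminus N,
\]
and to produce the desired surjection $\pi_1(U)\twoheadrightarrow\pi_1(\psi_{\le a_0})$ by restricting $r$ to $U$.

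First I would verify the chain of inclusions. The left inclusion $\psi_{\le a_0}\subset U$ is exactly how $a_0$ was chosen. For $U\subset M\setminus N$, fix any $s\in\mathcal{S}\subset\psi_{\le 0}$ with lift $\ti s\in\ti\psi_{\le 0}$, and let $\gc\in\gC\setminus\{1\}$ with $\inf d_\gc\le\gep$. For any $y\in\Min(\gc)$ the triangle inequality gives
\[
d_\gc(\ti s)\le 2\,d(\ti s,y)+d_\gc(y)=2\,d(\ti s,y)+\inf d_\gc,
\]
while $\ti s\in\ti\psi_{\le 0}$ forces $d_\gc(\ti s)\ge\inf d_\gc+\gep$; combining these yields $d(\ti s,y)\ge\gep/2$. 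Hence the open $\gep/2$-ball around $\ti s$ in $X$ is disjoint from $\ti N$, and since the injectivity radius at $s$ is at least $\gep/2$ this ball projects isometrically to the $\gep/2$-ball around $s$ in $M$, which therefore lies in $M\setminus N$. Taking the union over $s\in\mathcal{S}$ gives $U\subset M\setminus N$.

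The retraction $r:M\setminus N\to\psi_{\le a_0}$ from Proposition \ref{prop:retraction} satisfies $r\circ(k\circ j)=\mathrm{id}_{\psi_{\le a_0}}$, so in particular $r|_{U}\circ j=\mathrm{id}_{\psi_{\le a_0}}$. Passing to fundamental groups, $(r|_{U})_{*}\circ j_{*}=\mathrm{id}$, which forces $(r|_{U})_{*}\colon\pi_1(U)\to\pi_1(\psi_{\le a_0})$ to be surjective; equivalently, $\pi_1(\psi_{\le a_0})$ is a quotient of $\pi_1(U)$, as required.

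The only genuinely non-formal step is the triangle-inequality computation above, needed to place $U$ inside $M\setminus N$ so that the restriction $r|_{U}$ is defined; once that is in hand the conclusion is essentially a two-line diagram chase from the deformation retract supplied by the preceding proposition.
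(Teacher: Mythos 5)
Your argument is correct and follows essentially the same route as the paper: compose the inclusion $\psi_{\le a_0}\hookrightarrow U$ with the restriction of the retraction $r_{a_0}$ to $U$, note the composite is the identity, and conclude surjectivity of $(r_{a_0}|_U)_*$. The one thing you add that the paper leaves tacit is the verification that $U\subset M\setminus N$ (so that $r_{a_0}|_U$ is actually defined); your triangle-inequality computation, using that $d_\gc(\ti s)\ge\inf d_\gc+\gep$ for $\ti s\in\ti\psi_{\le 0}$, is a clean and correct way to check it.
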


\begin{proof}
The inclusion $i:\psi_{\le a_0}\to U$ induces a map $i^*:\pi_1(\psi_{\le a_0})\to\pi_1(U)$, and the retraction $r_{a_0}$ restricted to $U$, $r_{a_0}:U\to \psi_{\le a_0}$ induces a map $r_{a_0}^*:\pi_1(U)\to\pi_1(\psi_{\le a_0})$. Since $r_{a_0}\circ i$ is the identity on $\psi_{\le a_0}$, we see that $r_{a_0}^*\circ i^*$ is the identity on $\pi_1(\psi_{\le a_0})$. It follows that $r_{a_0}^*:\pi_1(U)\to\pi_1(\psi_{\le a_0})$ is onto.
\end{proof}

Since $M$ is negatively curved, the $\gep/2$--balls centered at points of $\mathcal{S}$ are convex, and hence any non-empty intersections of such is convex, hence contractible. Thus these balls form a {\it good cover} of $U$, in the sense of \cite{BoTu} (see also \cite{Ge}), and the nerve $\mathcal{N}$ of this cover is homotopic to $U$. Now $\pi_1(U)\cong\pi_1(\mathcal{N})$ has a generating set of size $\le E(\mathcal{N})$ -- the number of edges of the 1--skeleton $\mathcal{N}^1$. To see this one may choose a spanning tree $\mathcal{T}$ for the graph $\mathcal{N}^1$ and pick one generator for each edge belonging to $\mathcal{N}^1\setminus\mathcal{T}$.
Finally note that the edges of $\mathcal{N}$ correspond to pairs of points in $\mathcal{S}$ which are of distance at most $\gep$. Thus the degrees of the vertices  of $\mathcal{N}$ are uniformly bounded, in fact, one can show that $\nu(1.25\gep)/\nu(0.25\gep)$ is an upper bound for the degrees. Thus
$$
 d(\gC)\le |E(\mathcal{N})|\le \nu(1.25\gep)/\nu(0.25\gep)\cdot |\mathcal{S}|
 \le\frac{\nu(1.25\gep)}{\nu(0.25\gep)^2}\cdot\vol(M).
$$

\begin{rem}\label{rem:d(Gamma)}
It has been recently shown that Theorem \ref{thm:d(Gamma)} holds for all semisimple Lie groups, with no rank assumption (see \cite{Ge:morse}). \end{rem}


\section{Counting maximal arithmetic subgroups}\label{sec:maximal}

The main goal of this section is to prove Theorem \ref{thm:counting-maximal}.
Only the upper bound is needed for the main result of this paper
(and a much weaker estimate suffices).

For simplicity of notations we will assume throughout the proof that $H=\PGL_2(\BR)$
and remark at various points if non-trivial modifications are needed for the general case.
(For a proof of an even more general result including products of $\PGL_2(k)$, where $k$ is any characteristic $0$ local field, see the forthcoming thesis \cite{E}.) 
We normalize the Haar measure on $H$ so that for a lattice $\Gamma$ its covolume
is equal to the hyperbolic volume of the corresponding locally symmetric space.
Borel \cite{Bo} described in detail the maximal
arithmetic lattices in $H$. We will follow the
exposition of his work in \cite{MR}.

Arithmetic lattices in $H$ are all
obtained in the following way: Let $k$ be a totally real number
field of degree $d=d_k$, and $A$ a quaternion algebra over $k$. Assume
that for all the archimedean valuations $v\in V_\infty$ except for a single one $v_0$, $A$ ramifies over $k_v (\cong \BR)$, i.e., $A(k_v)$
is isomorphic to the Hamiltonian quaternion algebra, while
$A(k_{v_0})\cong {\mathrm M}_2(\BR)$. Let $\OO=\OO_k$ be the ring of
integers in $k$ and let $\mathfrak{D}$ be an order in $A(k)$, i.e.,
$\mathfrak{D}$ is a finitely generated $\OO$-submodule of $A(k)$
which is also a subring that generates $A(k)$ over $k$.
Furthermore, assume that $\mathfrak{D}$ is a maximal order in $A$. Let
$\mathfrak{D}^*$ be the group of invertible elements of $\mathfrak{D}$. Now,
$\mathfrak{D}^*$ is discrete in $\prod_{v\in V_{\infty}}
(A(k_v))^*\cong \mathrm{U}(2)^{d-1}\times \GL_2(\BR)$, and its
projection $\gC$ to $\PGL_2(\BR)$ gives a discrete subgroup of
$\PGL_2(\BR)$. A subgroup of $H=\PGL_2(\BR)$ which is
commensurable with such $\Gamma$ is called an {\it arithmetic subgroup}
of $H$. It is always a lattice, i.e., a discrete subgroup of finite covolume in
$H$. It is non-uniform if and only if $k=\mathbb{Q}$ and A is the split
quaternion algebra ${\mathrm M}_2(\mathbb{Q})$, i.e., $\Gamma$ is commensurable
to $\PGL_2(\mathbb{Z})$. All the arithmetic lattices of
$\PGL_2(\BR)$ are obtained in this way and two arithmetic
lattices of $H$ are commensurable if and only if they come, in the
process as above, from the same algebra $A$. So for each such algebra
$A$ we can associate a well defined commensurability class of
arithmetic lattices in $H$ which is denoted by $\mathcal{C}(A)$.

Now, a quaternion algebra $A$ over $k$ is completely determined by the
finite set of valuations $\ram(A)$, a subset of the set of all
valuations $V$ of $k$ which consists of those $v$ for which $A(k_v)$
ramifies (i.e., $A(k_v)$ is a division algebra), while for $v\in
V\setminus \ram(A)$, $A(k_v)$ splits (namely, isomorphic to
${\mathrm M}_2(k_v)$). The subset $\ram(A)$ must be of even size; in our case
it is formed by exactly $d-1$ real valuations and a subset $\ram_f(A)$,
possibly empty, of non-archimedean valuations. The set $\ram_f(A)$
can be identified with a subset of the prime ideals of $\OO$.
Let $\Delta (A)= \prod_{\mathcal{P}\in \ram_f(A)}\mathcal{P}$,
the product of all prime ideals at which $A$ ramifies.

Borel showed that the commensurability class $\mathcal{C}(A)$ has
infinitely many non-conjugate maximal elements but only finitely many of them have bounded covolume. The minimal covolume in the class
$\mathcal{C}(A)$ is:
\begin{equation}
 \frac{8\pi
 {\Delta_k}^{3/2}\zeta_k(2)\prod_{\mathcal{P}|\Delta(A)}(N(\mathcal{P})-1)}{(4{\pi}^2)^ {d_k}  [{R_{f,\infty}}^*:({R_f}^*)^2]
 [{}_2J_1:J_2]},\label{formula:maxcovolume}
\end{equation}
see \cite[Corollary 11.6.6, p. 361 and (11.6), p. 333]{MR}. (Note that in \cite[(11.28), p. 361]{MR} the formula is given for $\PGL_2(\BC)$, in which case the factor $8\pi$ in the nominator is replaced by $4\pi^2$.)

Here $\Delta_k$ is the absolute value of the discriminant of $k$, $\zeta _k$ is the
Dedekind zeta function of $k$ and $N(P)$ denotes the norm of the ideal $P$, i.e.
the order of the quotient field $\OO /P$. For the other
notations we refer to \cite[p. 358]{MR}. For our purpose it is enough to know the following estimates:

\begin{equation}\label{formula:2.3}
 1\leq [{R_{f,\infty}}^*:({R_f}^*)^2][{}_2J_1:J_2]\leq2^{d+\mid
 \ram_f(A)\mid}h_k,
\end{equation}
 where $h_k$ is the class number of $k$ (see \cite[pp. 358--360]{MR}).

\medskip

We can now begin with the proof of Theorem \ref{thm:counting-maximal}. Assume that $x$ is a large real number. We first bound the number of possible fields $k$ which can contribute a maximal arithmetic lattice of  covolume at most $x$. Then, given $k$, we will bound the number of possible quaternion algebras $A$ and finally, given $k$ and $A$, we will estimate the number of conjugacy classes of maximal lattices in $\mathcal{C}(A)$ of covolume at most $x$.

\begin{lem}\label{lem:1} There exist two constants $c_1$ and $c_2$ such that if for some quaternion algebra $A$ over $k$ as above $\mathcal{C}(A)$ contains a lattice of covolume at most x in $\PGL_2(\BR)$, then $d_k \leq c_1 \log x + c_2$.
\end{lem}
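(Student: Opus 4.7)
The plan is to start from the Borel formula \eqref{formula:maxcovolume}, combined with the estimate \eqref{formula:2.3}, which yields
\[
v \;\ge\; \frac{8\pi\,\Delta_k^{3/2}\,\zeta_k(2)\,\prod_{\mathcal P\mid\Delta(A)}(N(\mathcal P)-1)}{(4\pi^2)^{d_k}\cdot 2^{d_k+|\ram_f(A)|}\cdot h_k}
\]
for every maximal arithmetic lattice in $\mathcal{C}(A)$ of covolume $v$. The goal is to show that this lower bound grows at least exponentially in $d_k$, independently of $A$, so that $v\le x$ forces $d_k\le c_1\log x+c_2$.

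The first step is to absorb the dependence on $A$ (and on the possibly large number $|\ram_f(A)|$) into the Euler product of $\zeta_k(2)$. Writing
\[
\zeta_k(2)\prod_{\mathcal P\mid\Delta(A)}\frac{N(\mathcal P)-1}{2}
 \;=\; \prod_{\mathcal P\mid\Delta(A)}\frac{N(\mathcal P)^2}{2(N(\mathcal P)+1)}\,\cdot\!\!\prod_{\mathcal P\nmid\Delta(A)}\!\!\bigl(1-N(\mathcal P)^{-2}\bigr)^{-1},
\]
the unramified product is $\ge 1$, while each ramified factor is $\ge 2/3$ with equality only when $N(\mathcal P)=2$. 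Since there are at most $d_k$ prime ideals of $k$ of norm $2$ (all lying over the rational prime $2$), the whole combination is bounded below by $(2/3)^{d_k}$, uniformly in $A$. After this, one is left with
\[
v \;\ge\; \frac{8\pi\,\Delta_k^{3/2}}{(12\pi^2)^{d_k}\,h_k}.
\]

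The second step is to bound the class number. A Brauer--Siegel type estimate---in its explicit form, using Zimmert's lower bound on the regulator of a totally real field together with a standard upper bound on the residue of $\zeta_k$ at $s=1$---gives $h_k \le c(\varepsilon)\,\Delta_k^{1/2+\varepsilon}$ for any fixed $\varepsilon>0$. Inserting this yields
\[
v \;\ge\; \frac{c_\varepsilon\,\Delta_k^{1-\varepsilon}}{B^{d_k}}
\]
for an absolute $B>1$ independent of $\varepsilon$.

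Finally, I would invoke a discriminant lower bound of the form $\Delta_k^{1/d_k}\ge A$ for totally real fields; Minkowski is enough in principle, and Odlyzko's refinement supplies a much larger $A$. Choosing $\varepsilon$ small enough so that $A^{1-\varepsilon}>B$, we get $v\ge D^{d_k}/c'$ with $D>1$, whence $v\le x$ forces $d_k\le c_1\log x+c_2$. The main obstacle is the bookkeeping of exponential-in-$d_k$ factors: the term $\Delta_k^{3/2}$ has to survive the combined loss from $(4\pi^2)^{d_k}$, $2^{d_k+|\ram_f(A)|}$, and $h_k$. This is precisely why (i) the Euler product trick is needed to handle $|\ram_f(A)|$ uniformly in $A$, and (ii) a discriminant lower bound specific to totally real fields (rather than the much weaker Minkowski bound for general number fields) is essential to make $A$ large enough to dominate all the absorbed constants.
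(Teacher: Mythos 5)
Your strategy differs from the paper's: the paper proves this lemma in one line by invoking Chinburg--Friedman's Lemma 4.3 of \cite{CF}, which gives the unconditional inequality $\covol(\Gamma)>0.69\exp\bigl(0.37\,d_k-19.08/h(k,2,A)\bigr)$ directly, whereas you try to rederive an exponential-in-$d_k$ lower bound from scratch out of Borel's formula, Brauer--Siegel, Zimmert, and Odlyzko. That is a legitimate thing to attempt, and your Euler-product factorization is a neat way to absorb the $2^{|\ram_f(A)|}$ factor. The trouble is that the bookkeeping does not close, so there is a genuine gap.

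Concretely, after absorbing $2^{d_k+|\ram_f(A)|}$ and the Euler product you arrive at $v\ge 8\pi\Delta_k^{3/2}/\bigl((12\pi^2)^{d_k}h_k\bigr)$, so the exponential you must defeat is $B=12\pi^2\approx 118.4$. Your final step requires $A^{1-\varepsilon}>B$ where $A$ is an unconditional lower bound on $\Delta_k^{1/d_k}$ for totally real fields. Odlyzko's unconditional bound there is asymptotically only about $60.8$; the value $\approx 215$ that would suffice is the GRH-conditional one. So $A^{1-\varepsilon}<A<B$ and the inequality fails. The problem persists if one tries to recover the loss through the class number: the claim $h_k\le c(\varepsilon)\Delta_k^{1/2+\varepsilon}$ with $c(\varepsilon)$ independent of the degree is not correct. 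The class number formula gives $h_kR_k=\Delta_k^{1/2}2^{1-d_k}\kappa_k$ for totally real $k$, and the best degree-uniform bound on the residue $\kappa_k$ (Louboutin's $\kappa_k\le(e\log\Delta_k/2(d_k-1))^{d_k-1}$) only yields $h_kR_k\le\Delta_k^{3/4}$ uniformly. Plugging in Zimmert's (or even Friedman's improved) regulator lower bound and the $60.8$ discriminant bound still leaves $v\ge\mathrm{const}\cdot\theta^{d_k}$ with $\theta<1$, which tends to $0$ rather than to $\infty$. This is precisely why the paper treats \cite[Lemma 4.3]{CF} as a black box: Chinburg and Friedman's derivation combines the class number formula, Odlyzko's explicit formula, and regulator bounds in a much more integrated and carefully optimized way than the crude step-by-step estimates you are using, and their final gain per degree ($e^{0.37}\approx 1.45$) is far smaller than the factor-of-two margins that would be needed for your naive bookkeeping to work. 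To repair your proof you would essentially have to redo their analysis, or simply cite their lemma as the paper does.
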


\begin{proof}
By \cite[Lemma 4.3]{CF}, we know that if $\Gamma \in \mathcal{C}(A)$, then
\begin{equation}\label{formula:2.5}
 \covol(\Gamma ) > 0.69 \exp (0.37d_k - \frac{19.08}{h(k,2,A)}),
\end{equation}
where $h(k,2,A)$ is the order of a certain quotient of the class group of $k$, in particular, $1\leq h(k,2,A)\leq h_k$.
This lemma is one of the main technical results of \cite{CF}, its proof uses a variety of number-theoretic techniques.

We obtain
\begin{equation}
 0.69\exp(0.37d_k - 19.08) \leq x,
\end{equation}
and so
\begin{equation}\label{formula:2.6}
 d_k \leq 3\log x + 21.
\end{equation}
\end{proof}

\begin{lem} \label{lemma:2.2}
There exist constants $c_3, c_4 \in \BR^{>0}$ such that if for some $A$ and $k$ as above there is $\Gamma$ in $\mathcal{C}(A)$ of covolume $ \leq x$, then $\gD_k\leq c_3x^{c_4}$.
\end{lem}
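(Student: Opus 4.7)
The plan is to exploit Borel's explicit formula~(\ref{formula:maxcovolume}) for the minimal covolume $V_{\min}(\mathcal{C}(A))$. Since every lattice in $\mathcal{C}(A)$ has covolume at least this minimum (attained on some maximal lattice), the hypothesis that some $\Gamma\in\mathcal{C}(A)$ satisfies $\covol(\Gamma)\le x$ forces $V_{\min}(\mathcal{C}(A))\le x$. Combining this with a lower bound for the right-hand side of~(\ref{formula:maxcovolume}) in terms of $\Delta_k$, $d_k$ and the class number $h_k$ will give an inequality that can be solved for $\Delta_k$.

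To produce the lower bound: by~(\ref{formula:2.3}), the index $[R_{f,\infty}^*:(R_f^*)^2][{}_2J_1:J_2]$ is at most $2^{d_k+|\ram_f(A)|}h_k$; in the numerator of~(\ref{formula:maxcovolume}) one has $\zeta_k(2)\ge 1$, and absorbing the $2^{|\ram_f(A)|}$ into the ramification product gives
$$
\prod_{\mathcal{P}\in\ram_f(A)}\frac{N(\mathcal{P})-1}{2}\;\ge\;2^{-m},
$$
where $m$ is the number of ramified primes of norm~$2$; since every such prime lies above the rational prime $2$, one has $m\le d_k$. Collecting constants, this yields
$$
x\;\ge\;V_{\min}(\mathcal{C}(A))\;\ge\;\frac{c\,\Delta_k^{3/2}}{C^{d_k}\,h_k}
$$
for absolute effective constants $c,C>0$ (one may take $C=16\pi^2$).

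Next I would invoke two standard number-theoretic inputs. First, an effective Minkowski/Brauer--Siegel-type estimate, combined with Zimmert's absolute lower bound on the regulator, yields $h_k\le c_\epsilon\,\Delta_k^{1/2+\epsilon}$ for any fixed $\epsilon>0$ (the cruder bound $h_kR_k\le c\,\Delta_k^{1/2}(\log\Delta_k)^{d_k}$ suffices, because the $d_k$ in the exponent is tamed in the next step). Second, Lemma~\ref{lem:1} gives $d_k\le c_1\log x+c_2$, hence $C^{d_k}\le C^{c_2}\,x^{c_1\log C}$. Substituting both into the displayed inequality produces
$$
\Delta_k^{\,1-\epsilon}\;\le\;c'\,x^{\,1+c_1\log C},
$$
and choosing $\epsilon<1$ gives $\Delta_k\le c_3 x^{c_4}$ with effectively computable $c_3,c_4$.

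The main obstacle will be the bookkeeping in the second paragraph: ramified primes of residue degree one above~$2$ interact badly with the factor $2^{|\ram_f(A)|}$ in~(\ref{formula:2.3}) and must be absorbed carefully, and the bound on $h_k$ must be chosen so that a strictly positive power of $\Delta_k$ survives on the left. The rest is an algebraic manipulation in which Lemma~\ref{lem:1} is the essential device, transforming the factors $(4\pi^2)^{d_k}$ and $2^{d_k}$ -- which are a priori exponential in $d_k$ -- into manageable polynomial factors of $x$.
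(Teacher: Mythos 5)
Your overall route is the one the paper follows: use Borel's covolume formula~(\ref{formula:maxcovolume}), the index bound~(\ref{formula:2.3}), a class-number estimate, and then Lemma~\ref{lem:1} to convert the $d_k$-exponentials into polynomial factors of $x$. The gap is in the class-number step. Your parenthetical claim that the crude bound $h_k R_k \le c\,\Delta_k^{1/2}(\log\Delta_k)^{d_k}$ ``suffices because the $d_k$ in the exponent is tamed'' does not hold. Substituting it (together with Zimmert's $R_k\ge c'>0$) gives $x \gg \Delta_k / (C\log\Delta_k)^{d_k}$, and with $d_k\le c_1\log x + c_2$ this reads
\[
\log\Delta_k \;\le\; \log x + O\bigl(\log x\cdot\log\log\Delta_k\bigr),
\]
whose solution is only $\log\Delta_k = O(\log x\cdot\log\log x)$, i.e. $\Delta_k \le x^{O(\log\log x)}$. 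That is quasi-polynomial, strictly weaker than the asserted $\Delta_k\le c_3 x^{c_4}$. The obstruction is that $(\log\Delta_k)^{d_k}$ is not a constant raised to the $d_k$: since nothing excludes $d_k\asymp\log\Delta_k$ here, $(\log\Delta_k)^{d_k}$ can dwarf any fixed power of $\Delta_k$, and likewise your ``cleaner'' bound $h_k\le c_\epsilon\Delta_k^{1/2+\epsilon}$ with $c_\epsilon$ independent of the degree is not available from the standard effective estimates in this regime.

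The paper sidesteps this by citing the Borel--Prasad bound~(\ref{formula:2.9}), $h_k\le 10^2(\pi/12)^{d_k}\Delta_k$, whose crucial feature is that it is a \emph{pure} power of $\Delta_k$ times a \emph{pure} constant raised to the power $d_k$; the $(\log\Delta_k)^{d_k-1}$ factor in the naive residue estimate has been absorbed into $\Delta_k^{1/2}$ (via a Louboutin-type bound on $\mathrm{Res}_{s=1}\zeta_k(s)$ and the elementary inequality $eu\le e^u$ with $u=\log\Delta_k/(2(d_k-1))$), at the cost of raising the exponent of $\Delta_k$ from $1/2$ to $1$. Once you replace your Brauer--Siegel invocation by~(\ref{formula:2.9}), the rest of your computation reproduces~(\ref{formula:2.12}) and yields the stated polynomial bound; so the fix is local, but the specific structure of the class-number estimate is essential and cannot be replaced by the cruder $(\log\Delta_k)^{d_k}$ version.
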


\begin{proof}
Borel and Prasad \cite[(7) on p. 143]{BP} deduced from Brauer-Siegel Theorem and a result of Zimmert that
\begin{equation}\label{formula:2.9}
 h_k \leq 10^2(\frac{\pi}{12})^{d_k}\Delta_k.
\end{equation}
Combining (\ref{formula:maxcovolume}), (\ref{formula:2.3}) and (\ref{formula:2.9}), we obtain
\begin{equation}\label{formula:2.10}
 \frac{8\pi{\Delta_k}^{3/2}\zeta_k(2)\prod_{P\in \ram_f(A)}(N(P)-1)}{(4\pi^2)^{d_k}2^{d_k +|\ram_f(A)|}10^2(\frac{\pi}{12})^{d_k}\Delta_k}\leq x.
\end{equation}

Now $\frac{N(P)-1}{2}\geq 1$ unless $|N(P)|=2$, which can happen for at most $d_k$ primes in $\ram_f(A)$ and for those $\frac{N(P)-1}{2}=\frac{1}{2}$. Also,  $\zeta_k(2)\geq 1$. Thus from (\ref{formula:2.10}) we deduce
\begin{equation}\label{formula:2.12}
x\ge
\frac{8\pi{\Delta_k}^{1/2}}{10^2(4\pi^2)^{d_k}2^{2d_k}(\frac{\pi}{12})^{d_k}}= \frac{8\pi}{10^2}\frac{\Delta_k^{\frac{1}{2}}}{(\frac{4}{3}\pi^3)^{d_k}}
\end{equation}

Now, use (\ref{formula:2.6}) to deduce that
$$
 \frac{8\pi}{10^2}\Delta_k^{\frac{1}{2}}\leq x(\frac{4}{3}\pi^3)^{3\log x+21},
$$
which implies the lemma.
\end{proof}

We can now appeal to a theorem of Ellenberg and Venkatesh (\cite{EV}, \cite[Appendix]{Be}):

\begin{thm}\label{thm:EV}
\begin{enumerate}
\item [(i)] Let $N(x)$ denote the number of isomorphism classes of number fields $k$ with $\gD_k\le x$. Then for every $\epsilon > 0$, there is a constant $c_5(\epsilon)$ such that $\log N(x) \leq c_5(\epsilon)(\log x)^{1+\epsilon}$ for every $x\geq 2$.
\item
[(ii)] For a fixed $d$, let $N_d(x)$ be the number of isomorphism classes of number fields $k$ of degree at most $d$ with $\gD_k\le x$.
Then there exist $c_5'=c_5'(d)$ and $c_5''=c_5''(d)$ such that $N_d(x)\le c_5'\cdot x^{c_5''}$ for all sufficiently large $x$.
\end{enumerate}
\end{thm}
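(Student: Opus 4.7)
The plan is to follow the geometry-of-numbers approach of Ellenberg and Venkatesh.

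I would begin with part (ii), which is essentially a quantitative refinement of Hermite--Minkowski. Fix a degree $d$. A number field $k$ of degree at most $d$ with $\gD_k \le x$ is determined, up to $O(d!)$ choices, by a primitive element $\alpha \in \OO_k$ together with its minimal polynomial $f_\alpha \in \BZ[X]$. Applying Minkowski's convex body theorem to the sublattice of $\OO_k$ orthogonal to $\BZ$ under the Minkowski embedding produces such an $\alpha$ whose archimedean conjugates all have size $O(\gD_k^{1/(2(d-1))})$. Bounding the elementary symmetric functions of these conjugates controls the coefficients of $f_\alpha$, and counting the resulting integer polynomials of bounded height yields a polynomial estimate of the form $N_d(x) \le c_5'(d)\cdot x^{c_5''(d)}$.

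The harder part is (i), where the degree must be allowed to vary. First one invokes Minkowski's discriminant lower bound, which gives $\gD_k \gg (c d_k)^{2 d_k}$ and hence forces $d_k = O(\log x / \log\log x)$ whenever $\gD_k \le x$; only these degrees can contribute to $N(x)$. A naive summation over $d$ of the single-generator bound from part (ii) is still too weak, since its exponent in $x$ grows linearly in $d$. The decisive Ellenberg--Venkatesh innovation is a \emph{pre-sieve}: instead of searching for one primitive generator, one searches for a short tuple $\alpha_1,\ldots,\alpha_m \in \OO_k$ spanning a sublattice of $\OO_k$ of controlled small index, and one then counts such tuples together with the multiplication relations recovering the ring structure of $\OO_k$. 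With the correct choice of $m = m(d,\epsilon)$, the exponent in $x$ of the resulting bound on $N_d(x)$ grows only sublinearly with $d$, and summation over the permissible degree range yields the subexponential bound $\log N(x) \le c_5(\epsilon)(\log x)^{1+\epsilon}$.

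The main obstacle is precisely this calibration of the tuple length $m$ in part (i): too short and one recovers only the classical single-generator bound, gaining nothing over Schmidt; too long and the space of candidate tuples, together with the associated relations, becomes unmanageable. Part (ii), by contrast, is a routine application of the geometry of numbers once the embedding and the orthogonal sublattice are set up correctly.
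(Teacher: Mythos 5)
The paper does not prove this theorem at all: it is quoted as a black box, with the attribution ``We can now appeal to a theorem of Ellenberg and Venkatesh (\cite{EV}, \cite[Appendix]{Be}).'' So there is no in-paper proof for your sketch to be compared against, and what you should really be judged on is whether your outline faithfully reflects the cited Ellenberg--Venkatesh argument.

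Your outline is structurally correct: part (ii) is the classical quantitative Hermite--Minkowski/Schmidt bound via a short primitive element and height bounds on its minimal polynomial, and part (i) combines the Minkowski discriminant lower bound $\Delta_k \gg (cd_k)^{2d_k}$ (hence $d_k = O(\log x/\log\log x)$ when $\Delta_k\le x$) with the Ellenberg--Venkatesh multi-generator refinement to make the exponent in $x$ small as a function of $d$. Two caveats worth recording. First, in part (ii) a shortest nonzero vector of the trace-zero sublattice need not generate $k$ (it might lie in a proper subfield); the standard fix, which you should mention, is either to add a small multiple of $1$ to escape each of the finitely many proper subfields or to argue by induction on degree. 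Second, and more seriously, your statement that the exponent in $x$ of the bound on $N_d(x)$ ``grows only sublinearly with $d$'' is not strong enough for the conclusion of part (i). If the exponent were, say, $d^{1-\delta}$ for a fixed $\delta\in(0,1)$, then summing over $d\le c\log x/\log\log x$ gives $\log N(x) \asymp (\log x)^{2-\delta}$, which is not $O((\log x)^{1+\epsilon})$ for small $\epsilon$. What Ellenberg--Venkatesh actually obtain is an exponent of the form $\exp(C\sqrt{\log d}) = d^{o(1)}$, i.e.\ \emph{subpolynomial} in $d$; only then does plugging in $d\asymp\log x/\log\log x$ yield an exponent $\le (\log x)^{\epsilon}$ and hence $\log N(x)\le c_5(\epsilon)(\log x)^{1+\epsilon}$. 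You should replace ``sublinearly'' with ``subpolynomially (indeed $d^{o(1)}$)'' and note that this is precisely where the multi-generator pre-sieve earns its keep.
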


We can deduce from Lemma \ref{lemma:2.2} and  Theorem \ref{thm:EV} that for every $\epsilon > 0$, the number of number fields $k$ which contribute lattices of covolume at most $x$ is bounded by $x^{c_6(\epsilon)(\log x)^\epsilon}$ for a constant $c_6(\epsilon)$ and $x\gg 0$.
Given one of these fields $k$ with degree $d_k$, we now estimate the number of relevant $A$'s. This number is bounded from above by the number of possible ways to choose an even set of valuations $\ram(A)$ consisting of $d_k-1$ real valuations and $\ram_f(A)$, which satisfy (\ref{formula:2.10}).
There are $d_k$ ways to choose the $d_k-1$ real valuations. Now, $8\pi\Delta_k^{\frac{1}{2}}\geq 1$ and also $\zeta_k(2)\geq1$, by Lemma \ref{lem:1}, $d_k\leq c_1\log x+c_2$, and hence by (\ref{formula:2.10}) we get
\begin{equation}\label{formula:2.15}
 \frac{1}{2^{\mid \ram_f(A)\mid}}\prod_{P\in \ram_f(A)}(N(P)-1)\leq c_7x^{c_8},
\end{equation}
for some absolute constants $c_7$ and $c_8$.
The number of primes $P$ with $N(P)=2$ or 3 is at most $2d_k$. For them $\frac{N(P)-1}{2}\geq\frac{1}{2}$ and for all the other primes $\frac{N(P)-1}{2}\geq N(P)^{\log_4(\frac{3}{2})}$.
We can therefore deduce that:
\begin{equation}\label{formula:2.16}
 \prod_{P\in \ram_f(A)}N(P)\leq x^{c_9},
\end{equation}
for some absolute constant $c_9$.

Note that $\Delta(A)=\prod_{P\in \ram_f(A)}P$ is a square free ideal of norm at most
$x^{c_9}$, whose factors determine $A$ modulo the $d_k$ choices of the unique real valuation of $k$ in which $A$ splits.

\begin{lem} \label{lemma:2.17}
Let $I_k(x)$ denote the number of ideals of $\OO_k$ of norm less than $x$. Then $I_k(x)\leq\zeta_k(2)x^2\leq(\frac{\pi^2}{6})^{d_k}x^2$.
\end{lem}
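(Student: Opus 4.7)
The plan is to compare $I_k(x)$ directly with the Dedekind zeta function $\zeta_k(s)=\sum_I N(I)^{-s}$ evaluated at $s=2$, and then to bound $\zeta_k(2)$ via its Euler product.

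For the first inequality, since every summand in $\zeta_k(2)$ is positive, I would simply truncate to ideals with $N(I)<x$ and use the trivial lower bound $N(I)^{-2}\ge x^{-2}$ on such $I$:
$$\zeta_k(2)\;\ge\;\sum_{N(I)<x}\frac{1}{N(I)^2}\;\ge\;\frac{I_k(x)}{x^2},$$
which rearranges to $I_k(x)\le\zeta_k(2)\,x^2$.

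For the second inequality I would expand $\zeta_k(2)$ as the Euler product $\prod_P (1-N(P)^{-2})^{-1}$ over prime ideals of $\OO_k$, and group the primes according to the rational prime $p$ each one lies above. For a fixed $p$, writing $p\OO_k=\prod_i P_i^{e_i}$ with residue degrees $f_i$, the identity $\sum_i e_if_i=d_k$ forces two things simultaneously: (a) the number of primes above $p$ is at most $d_k$, and (b) $N(P_i)=p^{f_i}\ge p$, so $(1-N(P_i)^{-2})^{-1}\le(1-p^{-2})^{-1}$. Combining these local estimates yields $\prod_{P\mid p}(1-N(P)^{-2})^{-1}\le (1-p^{-2})^{-d_k}$, and multiplying over all rational primes $p$ gives $\zeta_k(2)\le \zeta(2)^{d_k}=(\pi^2/6)^{d_k}$, as required.

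No genuine obstacle is expected here: both steps are standard manipulations of the Dedekind zeta function, entirely justified by absolute convergence of the sum and Euler product at $s=2$. The only point worth noticing is that the exponential factor $(\pi^2/6)^{d_k}$ arises exactly from the bound $g\le d_k$ on the number of prime ideals above a given rational prime; this is precisely the factor that, together with the earlier bound $d_k\le c_1\log x+c_2$, will make the contribution of this lemma to the subsequent counting of factorizations $\Delta(A)=\prod_{P\in\ram_f(A)}P$ manageable.
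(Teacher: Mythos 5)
Your proof is correct and follows essentially the same route as the paper's: the first inequality is the same truncation of the Dirichlet series for $\zeta_k(2)$ (the paper phrases it via the coefficients $a_n$ rather than summing over ideals directly, but this is the identical argument), and your Euler-product derivation of $\zeta_k(2)\le\zeta(2)^{d_k}$ is exactly the computation behind the paper's remark that this bound ``is easy to see.''
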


\begin{proof}
$I_k(x)= a_1+a_2+\ldots+a_{[x]}$ where $a_n$ is the number of ideals of norm $n$. At the same time $\zeta_k(s)=\sum_{n=1}^{\infty}a_nn^{-s}$. Hence, for every large $x$, $\zeta_k(2)x^2\geq I_k(x)$. It is easy to see that $\zeta_k(2)\leq\zeta_\mathbb{Q}(2)^{d_k}$ and it is well known that $\zeta_\mathbb{Q}(2)=\frac{\pi^2}{6}$.
\end{proof}

Again, as $d_k = O(\log x)$, we can deduce from Lemma \ref{lemma:2.17} and (\ref{formula:2.16}) that given $k$, the number of possibilities for $\Delta(A)$ is polynomial in $x$.
So all together we now have a bound of the form $x^{c(\epsilon)(\log x)^{\epsilon}}$ for the number of quaternion algebras which give rise to lattices of covolume $\leq x$, or, in other words, for the number of commensurability classes $\mathcal{C}(A)$ with representatives of covolume at most $x$.
Before continuing, let us mention that if one is interested in non-uniform lattices there is at most one such $A$ (i.e., $k=\mathbb{Q}$ and $A={\mathrm M}_2(\mathbb{Q})$) and exactly one when $x$ is large enough. This is the case for $\PGL_2(\BR)$. For general $H=\PGL_2(\BR)^a\times\PGL_2(\BC)^b$ this number is polynomial by Theorem \ref{thm:EV} (ii) since the degree of the field of definition of a non-uniform lattice in such $H$ is $a+2b$.

Now fix $k$ and $A$, or, equivalently, $\ram(A)$. We need to count the maximal lattices within the class $\mathcal{C}(A)$. In \cite[Section 11.4]{MR} a class $m(A)$ of lattices in $\mathcal{C}(A)$ is described which essentially gives the maximal subgroups in $\mathcal{C}(A)$, namely, it contains all the maximal ones but maybe some more. We will show that even the total number of all of them is polynomial in $x$. An element of $m(A)$ is denoted by $\Gamma_{S,\mathfrak{D}}$ where $\mathfrak{D}$ is a maximal order in $A$ (in \cite{MR} it is denoted by $\OO$ but we prefer to use $\mathfrak{D}$ as $\OO$ for us is the ring of integers in $k$) and $S$ is a finite set of finite primes of $k$ disjoint from $\ram_f(A)$. When $S$ is the empty set we get the group $\Gamma_{\emptyset,\mathfrak{D}}$. This is a group of the minimal covolume in $\mathcal{C}(A)$ whose covolume is given by (\ref{formula:maxcovolume}). Up to conjugacy, the number of such groups is the same as the number of conjugacy classes of maximal orders $\mathfrak{D}$ in $A$. This number is called the type number of $A$ (cf. \cite[Section 6.7]{MR}). The type number is a power of $2$ \cite[Corollary 6.7.7]{MR}) which divides $h_k2^{d_k-1}$ (see \cite[eq. (6.13) p. 221]{MR}). Recall that $d_k=O(\log x)$ (by Lemma \ref{lem:1}). On the other hand by (\ref{formula:2.9}), $h_k \leq 10^2(\frac{\pi}{12})^{d_k}\Delta_k$ and by Lemma \ref{lemma:2.2}, $\Delta_k$ is polynomially bounded in $x$. Thus, the type number of $A$ is also polynomially bounded in $x$. We can therefore fix $\mathfrak{D}$ and count the number of possibilities for $S$. The exact form of $\Gamma_{S,\mathfrak{D}}$ is described in \cite[Section 11.4]{MR}, but for our current purpose what is only relevant is its covolume which is given by Theorem 11.5.1 on page 357 there:

\begin{prop} \label{proposition:2.19}
$\begin{displaystyle}
\frac{\covol(\Gamma_{S,\mathfrak{D}})}{\covol(\Gamma_{\emptyset,\mathfrak{D}})}= 2^{-m}\prod_{P\in S}(N(P)+1)
\end{displaystyle}$ for some $0\leq m\leq |S|$.
\end{prop}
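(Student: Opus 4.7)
The plan is to prove the formula via a local-global volume computation. Both $\Gamma_{\emptyset,\mathfrak{D}}$ and $\Gamma_{S,\mathfrak{D}}$ arise from the same adelic recipe: choose a compact open subgroup $K = K_\infty \times \prod_P K_P$ of $A^*(\mathbb{A}_k)/k_\mathbb{A}^*$, and take the arithmetic group to be the projection to $\PGL_2(\BR)$ of $A^*(k) \cap K$. The two groups share identical local data $K_P$ at all primes $P \notin S$, so by the standard Tamagawa-type product formula (applied on the simply connected cover, where strong approximation is available, and then tracked back to $\PGL_2$) the ratio of their covolumes collapses to a product of local indices:
\begin{equation*}
\frac{\covol(\Gamma_{S,\mathfrak{D}})}{\covol(\Gamma_{\emptyset,\mathfrak{D}})} = \prod_{P \in S} \frac{\vol(K_P^\emptyset)}{\vol(K_P^S)}.
\end{equation*}

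The core of the proof is then the local analysis at a single $P \in S$. Since $P \notin \ram_f(A)$, the algebra splits locally, $A_P \cong M_2(k_P)$, and $\PGL_2(k_P)$ acts on its Bruhat--Tits tree $\mathcal{T}_P$, which is regular of valence $N(P)+1$. The local piece $K_P^\emptyset$ is, up to conjugation, the image of $\mathfrak{D}_P^*$, i.e.\ the stabilizer of a vertex $v$ of $\mathcal{T}_P$. The local piece $K_P^S$ is the normalizer of the unit group of an Eichler order of level $P$, which coincides with the setwise stabilizer of an edge $e$ incident to $v$. Since the vertex stabilizer acts transitively on the $N(P)+1$ edges incident to $v$, and since the setwise edge stabilizer is the pointwise stabilizer enlarged by an edge-swap element of order two, the ratio $\vol(K_P^\emptyset)/\vol(K_P^S)$ equals either $(N(P)+1)/2$ or $N(P)+1$, depending on whether the candidate edge-swap belongs to $K_P^S$. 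Combining these dichotomies over $P \in S$ yields the formula with $m$ counting the primes at which the swap \emph{is} globally realized.

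The step I expect to be genuinely hard is the class-field-theoretic analysis that determines exactly when the edge-swap at $P$ lifts to a global element of the normalizer of $\mathfrak{D}$ (equivalently, is absorbed into $\Gamma_{S,\mathfrak{D}}$). This is controlled by the same unit-index quantities $[R_{f,\infty}^*:(R_f^*)^2]$ and $[{}_2J_1:J_2]$ that appear in (\ref{formula:maxcovolume}), and tracking them through the adelic exact sequence governing the passage from $\SL_2$ to $\PGL_2$ is precisely what makes the underlying argument in \cite[Ch.~11]{MR} technically delicate. For the statement as given here, however, only the qualitative bound $0 \le m \le |S|$ is needed, and this follows immediately from the local dichotomy above without needing the precise value of $m$.
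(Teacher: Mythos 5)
The paper does not prove this proposition; it is cited verbatim from Maclachlan--Reid \cite[Theorem 11.5.1]{MR}, with the sentence preceding the statement reading ``\dots{}its covolume\dots{}is given by Theorem 11.5.1 on page 357 there.'' That result in turn traces back to Borel \cite{Bo}. So you are reconstructing the underlying derivation rather than reproducing the paper's argument, which is a one-line reference.

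Your reconstruction captures the correct strategy: an adelic/Tamagawa decomposition reduces the ratio of covolumes to a product of local indices over $P\in S$, and at each such $P$ one compares the stabilizer of a vertex of the Bruhat--Tits tree of $\PGL_2(k_P)$ with the stabilizer of an incident edge, with the vertex-stabilizer-to-pointwise-edge-stabilizer index being $N(P)+1$ and the setwise edge stabilizer enlarging the pointwise one by a factor of $2$. You also correctly identify the determination of the exact power of $2$ as the class-field-theoretic bottleneck, tied to the unit/ideal-class indices $[R_{f,\infty}^*:(R_f^*)^2]$ and $[{}_2J_1:J_2]$ appearing in (\ref{formula:maxcovolume}). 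One imprecision worth flagging: you define $K_P^S$ as ``the setwise stabilizer of an edge,'' which by definition contains the edge-swap, yet a sentence later say the local ratio is $(N(P)+1)/2$ or $N(P)+1$ ``depending on whether the candidate edge-swap belongs to $K_P^S$.'' As written this is internally inconsistent. The resolution is that the \emph{local} setwise edge stabilizer in $\PGL_2(k_P)$ always contains the swap, and the factor $2^{-m}$ with $m\le|S|$ arises because the \emph{global} group $\Gamma_{S,\mathfrak{D}}$ (the rational points landing in the chosen adelic compact) may fail to realize some of these local swaps, a condition governed by the global indices you mention rather than by a purely local dichotomy. With that reading your argument for the qualitative bound $0\le m\le|S|$ is sound, and it is indeed all the paper needs, since the bound on the number of admissible $S$ in the proof of Theorem \ref{thm:counting-maximal} only uses $\covol(\Gamma_{S,\mathfrak{D}})/\covol(\Gamma_{\emptyset,\mathfrak{D}})\ge\prod_{P\in S}\frac{N(P)+1}{2}$.
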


Recall that the covolume of $\Gamma_{\emptyset,\mathfrak{D}}$ is given by (\ref{formula:maxcovolume}), but at this point we do not need it. Just recall that a well known result of Siegel asserts that for all lattices $\Gamma$ in $\PGL_2(\BR)$, $\covol(\Gamma)\geq \frac{\pi}{42}$. (Note that in both cases $G=\PSL_2(\BR)$ and $G=\PGL_2(\BR)$ we have used the measure induced from the hyperbolic structure on $G/K=\BH^2$. Now the lattice of minimal covolume in $\PGL_2(\BR)$ has as fundamental domain the pull back of the triangle $(\frac{\pi}{2},\frac{\pi}{3},\frac{\pi}{7})$ while its intersection with $\PSL_2(\BR)$ is of minimal covolume there but needs two copies of that triangle.) We can deduce now that if $\covol(\Gamma_{S,\mathfrak{D}})\leq x$, then
\[\label{formula:2-20} \prod_{P\in S}\frac{N(P)+1}{2}\leq\frac{42}{\pi}x.
\]
Arguing exactly as we did in (\ref{formula:2.15}), (\ref{formula:2.16}) and Lemma \ref{lemma:2.17} when bounding the possibilities for $\ram_f(A)$ we deduce that the number of possibilities for $S$ is polynomial in $x$ (in fact, it is even easier now as we do not need to exclude the primes $2$ and $3$).

If $H = \PGL_2(\BR)^a\times\PGL_2(\BC)^b$, instead of Siegel's theorem we can use the Kazhdan--Margulis theorem \cite{KM}, or Borel's result \cite[Theorem 8.2]{Bo} which implies that covolumes of arithmetic lattices in $H$ are bounded from bellow by a positive constant which depends only on $H$.
This finishes the proof of the upper bounds in both parts of \ref{thm:counting-maximal}.

\medskip

The proof of the lower bounds is much easier but it requires a detailed description of the groups $\Gamma_{S,\mathfrak{D}}$ above. As the lower bounds are not really needed for the main results, we only sketch the argument assuming that the reader is familiar with Section 11.4 of \cite{MR}.

For the lower bound in case (a) of Theorem \ref{thm:counting-maximal} we could vary the algebra $A$ and easily deduce that for some $\delta > 0$, there are at least $x^\delta$ non conjugate (and also not commensurable after conjugation) maximal arithmetic lattices in $\PGL_2(\BR)$ with covolume at most $x$, but such a proof would not work for the non-uniform case where all arithmetic lattices are commensurable (after conjugation) and there is only one $A={\mathrm M}_2(\BQ)$. For a proof which works in both cases fix $k$, $A$ and $\mathfrak{D}$ so that $\mathcal{C}(A)$ is a commensurability class of arithmetic lattices in $H$. We need to show that there exist sufficiently many subsets $S$ with $\prod_{v\in S}q_v\le x^c$ and $\Gamma_{S,\mathfrak{D}}$ is maximal. For the group $\Gamma_{S,\mathfrak{D}}$ to be maximal there should exist an element $a\in k$ such that $\tilde{v}(a)$ is odd for $v\in S$ (here $\tilde{v}(a)$ denotes the logarithmic valuation on $k_v$), $a$ is positive at the ramified real places of $A$,  and $\tilde{v}(a)$ is even for $v\in V_f \setminus (\ram_f(A)\cup S)$ (see discussion in Section 11.4 of \cite{MR} for more details).

Let $p$ be a rational prime and let $S_p$ be the set of places $v\in V_f$ such that $\tilde{v}(p)$ is odd. Assume that $p$ is unramified in $k$ which is the case for all sufficiently large primes. Then $S_p$ consists of the prime ideals of $k$ which divide $p$, so if $p_1 \neq p_2$, then $S_{p_1} \neq S_{p_2}$. To every such $p$ we can assign a maximal arithmetic subgroup $\Gamma_p = \Gamma_{S_p,\mathfrak{D}}$ which contains an element odd at $v$ for $v\in S_p$ and even at the remaining places in $V_f\setminus \ram_f(A)$. Then for $p_1 \neq p_2$ the groups $\Gamma_{p_1}$ and $\Gamma_{p_2}$ are non-conjugate maximal arithmetic subgroups of $H$. By Borel's volume formula (see (\ref{formula:maxcovolume}) and Proposition \ref{proposition:2.19})
$$\covol(\Gamma_p) \le c_1 \prod_{v\in S_p}(q_v +1),$$
where $c_1 = c_1(k,A)$ is a positive constant.

The set $S_p$ contains at most $d_k$ places of $k$ and $\prod_{v\in {S_p}}q_v\le p^{d_k}$. This implies
$$\covol(\Gamma_p) \le c_1 (2p)^{d_k}.$$
Thus, if $p \le \frac12(x/c_1)^{1/d_k}= x^{c_2}$, then $\covol(\Gamma_p) \le x$. As $k$ is fixed and so is $d_k$, it follows from the prime number theorem that
for a large enough $x$ there exists a constant $\gd>0$ such that there are at least $x^\gd$ such primes $p$, and hence such $\Gamma_p$'s.

This finishes the proof of the theorem. \qed

\begin{rem}
Theorem \ref{thm:counting-maximal} gives a solution to Problem~6.5 of \cite{Be} and implies that we can now remove the restriction
that the group $H$ has no simple factors of type $\mathrm{A}_1$ in the main result there.
\end{rem}

\begin{rem}
With the estimate of Lemma \ref{lemma:2.2} at hand we can essentially repeat all the steps of the proof of Theorem A in \cite{BP} for the groups $\mathrm{G}$ defined over number fields and having the absolute rank 1. The only missing ingredient, which is an analogue of a number-theoretic result from Section 6.1 in \cite{BP} for the groups of type ${\mathrm A}_1$, is now available because of Lemmas \ref{lem:1} and \ref{lemma:2.2}. This allows us to remove the restriction on the absolute rank from the statement of the Borel-Prasad theorem, which can be now formulated as follows:

\begin{thm}
Let $c > 0$ be given. Assume $k$ runs through the number fields. Then there are only finitely many choices of $k$, of an absolutely almost simple algebraic group $\mathrm{G}'$ defined over $k$ up to $k$-isomorphism, of a finite set $S$ of places of $k$ containing all the archimedean places, of arithmetic subgroup $\Gamma'$ of $\mathrm{G}'_S = \prod_{v\in S} \mathrm{G}'(k_v)$ up to conjugacy, such that $\mu'_S(\mathrm{G}'_S/\Gamma') \le c$.
\end{thm}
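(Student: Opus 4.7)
The plan is to adapt the proof of Theorem A in \cite{BP} by supplying the missing type $\mathrm{A}_1$ ingredients from Section \ref{sec:maximal}. For absolute rank $\geq 2$, the argument of \cite{BP} works verbatim, so only type $\mathrm{A}_1$ requires attention; there $\mathrm{G}'$ is an inner form of $\SL_2$ (type $\mathrm{A}_1$ admits no outer forms) corresponding to a quaternion algebra $A$ over $k$, possibly of the second kind over a quadratic extension if one wishes to include unitary forms -- but the classification is the standard one.

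The proof would follow the six standard Borel-Prasad steps. First, I would use an $S$-arithmetic analogue of Lemma \ref{lem:1}, via the Chinburg-Friedman bound \cite[Lemma 4.3]{CF}, to conclude $d_k = O(\log c)$. Second, I would use an $S$-arithmetic analogue of Lemma \ref{lemma:2.2}, via Prasad's volume formula and the Brauer-Siegel bound (\ref{formula:2.9}), to obtain $\Delta_k \leq c^{O(1)}$. Third, Theorem \ref{thm:EV}(ii) then yields finitely many $k$. Fourth, for fixed $k$, bounding $\ram(A)$ exactly as in (\ref{formula:2.15})-(\ref{formula:2.16}) and Lemma \ref{lemma:2.17} limits $\mathrm{G}'$ to finitely many choices up to $k$-isomorphism. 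Fifth, for fixed $k$ and $\mathrm{G}'$, each non-archimedean $v\in S$ at which $\mathrm{G}'$ is isotropic contributes a factor growing with $q_v$ to Prasad's volume formula, so $S$ ranges over finitely many possibilities. Sixth, for fixed $k$, $\mathrm{G}'$, and $S$, Borel's original finiteness theorem -- or, more concretely, the type-number bound used in Section \ref{sec:maximal} combined with Proposition \ref{proposition:2.19} applied to the $S$-enlargement -- gives finitely many conjugacy classes of $\Gamma'$.

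The main obstacle lies in steps one and two: one must verify that the Chinburg-Friedman lower bound and Prasad's volume formula, now carrying extra factors from non-archimedean places in $S$, still produce bounds of the shape used in Section \ref{sec:maximal}. The point is that the local factors $\lambda_v(\mathrm{G}')$ at non-archimedean $v\in S$ are all bounded below by $1$, so they only strengthen the lower bound on the covolume in terms of $d_k$ and of $\Delta_k^{1/2}$; the extraction of the degree and discriminant bounds is therefore no harder (in fact easier) than in the purely archimedean case handled in Section \ref{sec:maximal}. Once this is settled, every remaining step is a direct transcription of the Borel-Prasad argument, and the theorem follows.
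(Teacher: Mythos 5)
Your proposal matches the paper's approach: the paper proves this theorem by the same remark — namely, repeat the Borel--Prasad argument and note that the only missing ingredient for type $\mathrm{A}_1$ (the degree and discriminant bounds from Section 6.1 of \cite{BP}) is now supplied by Lemmas \ref{lem:1} and \ref{lemma:2.2}. Your spelling out of the six Borel--Prasad steps, and your observation that the non-archimedean local factors at places of $S$ are bounded below by $1$ and hence only strengthen the covolume lower bound, is exactly the elaboration the paper leaves implicit.
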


We refer to \cite{BP} for the definition of the measure $\mu'_S$ with respect to which the covolumes of arithmetic subgroups are computed (the so-called {\em Tits measure}), introduction and more details on this important result. Note that an analogous result for the groups over the global function fields is not true. For example, one can show that the covolume of $\SL_n(\mathrm{F}_q[t^{-1}])$ in $\SL_n(\mathrm{F}_q((t)))$ tends to zero if either $n$ or $q\to\infty$ (see \cite[Section 7.12]{BP}).
\end{rem}


\section{Uniform bounds for Fuchsian groups and characters of Symmetric groups}\label{sec:Fushcian}

In this section we study the subgroup growth of finitely generated
non-elementary discrete subgroups of $\PGL_2(\BR)$, the so called
Fuchsian groups.

By classical work of
Fricke and Klein, the orientation-preserving Fuchsian groups $\G$
(i.e. those contained in $\PSL_2(\BR)$) have a
presentation of the following form:
\begin{equation}
\begin{array}{lll}
\mathrm{generators:} &~& a_1,b_1,\ldots ,a_g,b_g,~\mathrm{(hyperbolic)},~
                     x_1,\ldots ,x_d ~ \mathrm{(elliptic)},\\
                    &~& y_1,\ldots ,y_s  ~ \mathrm{(parabolic)},~
                    z_1,\ldots ,z_t  ~\mathrm{(hyperbolic~boundary~elements)}\\ \\

\mathrm{relations: } &~& x_1^{m_1} = \cdots = x_d^{m_d} = 1,~
                    x_1\cdots x_d \,y_1 \cdots y_s \,z_1 \cdots z_t\, [a_1,b_1]\cdots
                   [a_g,b_g] = 1,
\end{array}
\end{equation}

\noindent
where $g,d,s,t \geq 0$ and $m_i \geq 2$ for all $i$.
The number $g$ is referred to as the {\it genus} of $\G$.
Define $\mu(\G) = -\c(\G)$, where $\c(\G)$ is the Euler characteristic
of $\G$ (in the rest of this section we reserve the letter
$\c$ for characters of symmetric groups).
Then for $\G$ as above we have
\[
\mu(\G) = 2g-2 + \sum_{i=1}^d (1 - \frac{1}{m_i}) + s+t.
\]
It is well known that $\mu(\G) > 0$.

The non orientation-preserving Fuchsian groups have
presentations as follows, with $g>0$:
\begin{equation}
\begin{array}{lll}
\mathrm{generators: } &~& a_1,\ldots ,a_g,~
                    x_1,\ldots ,x_d,~
                    y_1,\ldots ,y_s,~
                    z_1,\ldots ,z_t \\ \\
\mathrm{relations: } &~& x_1^{m_1} = \cdots = x_d^{m_d} = 1,  \\
                   &~& x_1\cdots x_d\, y_1 \cdots y_s \,z_1 \cdots z_t \,a_1^2\cdots
                   a_g^2 = 1.
\end{array}
\end{equation}

In this case we have
\[
\mu(\G) = -\c(\G) = g-2 + \sum_{i=1}^d (1 - \frac{1}{m_i}) + s+t,
\]
and again, $\mu(\G) >0$.

We call Fuchsian groups as in (11) {\it oriented}, and those as in (12)
{\it non-oriented}.

The Fuchsian groups with $s=t=0$ are the uniform lattices;
these are more challenging ones since the other Fuchsian groups
are free products of cyclic groups.

In this section we prove Theorem \ref{uniform} which provides a uniform
bound on the subgroup growth of Fuchsian groups.

The novelty of Theorem \ref{uniform} is that it holds for {\it all} $n$
(not just for large $n$, where large may depend on $\G$), and
that it uses $\mu(\G)$ as the only parameter.
For a fixed group there are more refined asymptotic results
(see \cite{LiSh}), but it is the uniform version above which
is crucial for our applications.

We shall now embark on the proof of Theorem~\ref{uniform}.
We assume throughout this section that $\G$ is an oriented Fuchsian
group with the presentation given in (11). The proof in the
non-oriented case (12) is very similar, hence omitted.

A major tool in our proof is character theory of symmetric groups
(see \cite{Sa}).
We begin with some relevant notation and results.

Denote by $\Irr(S_n)$ the set of all irreducible characters of $S_n$.
By a partition of a positive integer $n$ we mean a tuple
$\la = (\la_1,\ldots ,\la_r)$ with $\la_1 \geq
\la_2 \geq \ldots \geq \la_r\geq 1$ and $\sum_{i=1}^r \la_i = n$.
Denote by $\chi_\la$ the irreducible
character of $S_n$ corresponding to the partition $\la$.

The following result of Fomin and Lulov \cite{FL} plays
a key role in our proof.

\begin{prop}\label{FL}
{\rm (Fomin-Lulov \cite{FL})} Fix an integer $m \geq 2$. Suppose
$n$ is divisible by $m$, say $n = am$, and let $\pi \in S_n$ be a permutation
of cycle-shape $(m^a)$. Then for any irreducible character $\chi$ of $S_n$,
we have
\[
|\chi(\pi)| \leq \frac{a!\,m^a}{(n!)^{1/m}}\cdot \chi(1)^{1/m}.
\]
Consequently we have
\[
|\chi(\pi)| \le b n^{{1/2}} \cdot \chi(1)^{1/m},
\]
where $b$ is some absolute constant.
\end{prop}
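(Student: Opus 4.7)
The plan is to combine the Murnaghan--Nakayama rule with the theory of $m$-cores and $m$-quotients, and then to compare the resulting combinatorial expression for $|\chi_\lambda(\pi)|$ with the hook length formula for $\chi_\lambda(1) = f^\lambda$. First I would invoke Murnaghan--Nakayama to write $\chi_\lambda(\pi)$ as a signed sum over border strip tableaux of shape $\lambda$ whose strips all have size $m$ (since $\pi$ has cycle type $(m^a)$). A classical fact is that such tableaux exist only when the $m$-core of $\lambda$ is empty, so otherwise $\chi_\lambda(\pi) = 0$; in the empty-core case, the $m$-quotient bijection arranges the tableaux into a single multinomial block with signs that coincide, so that
\[
|\chi_\lambda(\pi)| = \binom{a}{a_0,\ldots,a_{m-1}} \prod_{i=0}^{m-1} f^{\lambda^{(i)}},
\]
where $(\lambda^{(0)},\ldots,\lambda^{(m-1)})$ is the $m$-quotient, $|\lambda^{(i)}|=a_i$, and $\sum_i a_i = a$.

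Next I would apply the hook length formula in each $\lambda^{(i)}$ to collapse the multinomial expression into $a!/\prod_{i,c} h^{(i)}(c)$, and use the classical correspondence which says that the hook lengths of $\lambda$ divisible by $m$ are precisely $m$ times the hook lengths of the cells of the $m$-quotient. This rewrites
\[
|\chi_\lambda(\pi)| = \frac{a!\, m^a}{\prod_{c \in \lambda,\, m \mid h(c)} h(c)},
\]
while the hook length formula for $\lambda$ itself reads $\chi_\lambda(1) = n!/\prod_{c\in\lambda} h(c)$. Plugging both expressions into the target bound reduces the proposition to the purely combinatorial inequality
\[
\Bigl(\prod_{c \in \lambda} h(c)\Bigr)^{1/m} \;\le\; \prod_{c \in \lambda,\, m \mid h(c)} h(c).
\]

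The main obstacle is establishing this last inequality; everything preceding is bookkeeping. My approach would use the beta-set (Maya diagram) realization of $\lambda$: distribute the beta-numbers among the $m$ runners of an abacus indexed by residue classes modulo $m$, so that hooks divisible by $m$ correspond to pairs of beta-numbers lying on the same runner, while the remaining hooks correspond to cross-runner pairs. A careful matching, assigning each cross-runner pair $(b,b')$ a weight that can be dominated by an appropriate in-runner hook, combined with an AM--GM comparison within each runner, should yield the required bound. Finally, once the first inequality is proved, the coarser estimate $|\chi_\lambda(\pi)|\le b n^{1/2} \chi_\lambda(1)^{1/m}$ is immediate from Stirling: writing $n=am$ and expanding $a!\, m^a/(n!)^{1/m}$, the leading factors $(a/e)^a m^a$ and $((n/e)^n)^{1/m} = (n/e)^a$ cancel exactly, leaving a residual factor of order $\sqrt{a}/n^{1/(2m)} = O(n^{1/2})$ uniformly in $m \ge 2$.
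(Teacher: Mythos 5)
The paper does not prove this proposition; it simply cites Fomin--Lulov \cite{FL}. Your reduction follows exactly the route of that paper: the Murnaghan--Nakayama rule together with the constancy of the leg-length sign on $m$-rim-hook tableaux gives $|\chi_\lambda(\pi)| = a!\,m^a / \prod_{m \mid h(c)} h(c)$ when the $m$-core is empty (and $\chi_\lambda(\pi)=0$ otherwise), and comparison with the hook length formula $\chi_\lambda(1) = n!/\prod_c h(c)$ reduces the claim to the inequality
\[
\prod_{c\in\lambda} h(c) \;\le\; \Bigl(\prod_{c\in\lambda,\, m\mid h(c)} h(c)\Bigr)^{m}.
\]
This reduction is correct, and your Stirling computation for the second estimate is also fine (indeed it yields the sharper $O(n^{1/2-1/(2m)})$ the authors note).

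The genuine gap is that you never prove the displayed hook-length inequality, which is the entire content of the Fomin--Lulov theorem. Everything before it is, as you say, bookkeeping: the multinomial count of rim hook tableaux via the $m$-quotient and the fact that $m$-divisible hooks of $\lambda$ are $m$ times the hooks of the quotient are standard. But the inequality itself is a nontrivial combinatorial statement comparing a product over all $n$ cells with the $m$-th power of a product over the $a=n/m$ cells with $m$-divisible hooks. Your proposed abacus argument --- assign each of the $a(m-1)$ cross-runner hooks to an in-runner hook that dominates it, with each in-runner hook used at most $m-1$ times, plus some unspecified ``AM--GM comparison within each runner'' --- is a plausible-sounding plan, but no such matching is exhibited, it is not clear such a matching exists in the form you describe, and the phrase ``should yield the required bound'' signals that this step has not actually been carried out. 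To make this a proof you would need to either construct that matching explicitly (and verify the domination and the bound on multiplicities), or reproduce Fomin--Lulov's own argument, or find another proof of the inequality; as written, the crux of the proposition is asserted rather than established.
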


We note that the second assertion follows from first using
Stirling's formula. In fact, as noted in \cite{FL}, we even have
$|\chi(\pi)| \le b n^{1/2-1/(2m)} \cdot \chi(1)^{1/m}$.
In the rest of this section $b$ denotes the constant above.

We shall also frequently use the Murnaghan-Nakayama Rule \cite[p. 180]{Sa}.
By a {\it rim $r$-hook} $\nu$ in a $\la$-tableau, we mean a connected part of
the tableau containing $r$ boxes, obtained by starting from a box at the
right end of a row and
at each step moving downwards or leftwards only, which can be removed
to leave a proper tableau denoted by $\la \backslash \nu$.
If, moving from right to left,
the rim hook $\nu$
starts in row $i$ and finishes in column $j$, then the {\it leg-length}
$l(\nu)$ is defined to be the number of boxes below the $ij$-box
in the $\la$-tableau.

\begin{lem}\label{murn} {\rm (Murnaghan-Nakayama Rule)} Let $\rho \s \in S_n$, where
$\s$ is an $r$-cycle and $\rho$ is a permutation of the remaining $n-r$
points. Then
\[
\chi_\la (\rho \s) = \sum_\nu (-1)^{l(\nu)} \chi_{\la \backslash \nu} (\rho),
\]
where the sum is over all rim $r$-hooks $\nu$ in a $\la$-tableau.
\end{lem}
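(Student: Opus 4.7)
The plan is to derive this identity from the Frobenius character formula, reducing the statement about symmetric group characters to a Pieri-type rule for Schur functions. Let $\Lambda$ denote the ring of symmetric functions with the Hall inner product $\langle \cdot, \cdot \rangle$, under which the power sums satisfy $\langle p_\mu, p_\nu \rangle = z_\mu \delta_{\mu\nu}$, where $z_\mu$ is the centralizer order of a permutation of cycle type $\mu$. Frobenius's formula says $\chi_\lambda(\mu) = \langle s_\lambda, p_\mu \rangle$, where $s_\lambda$ is the Schur function indexed by $\lambda$. If $\rho \sigma \in S_n$ factors as in the lemma, with $\sigma$ an $r$-cycle and $\rho$ a permutation of the complementary points of cycle type $\nu'$, then $\rho \sigma$ has cycle type $(r) \cup \nu'$, so
$$\chi_\lambda(\rho\sigma) = \langle s_\lambda, \, p_r \, p_{\nu'} \rangle = \langle p_r^\perp s_\lambda, \, p_{\nu'} \rangle,$$
where $p_r^\perp$ denotes the adjoint of multiplication by $p_r$ with respect to the Hall inner product.

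This reduces the lemma to the symmetric-function identity
$$p_r^\perp s_\lambda = \sum_\nu (-1)^{l(\nu)} s_{\lambda \setminus \nu},$$
summed over rim $r$-hooks $\nu$ in the Young diagram of $\lambda$. By adjointness this is equivalent to the dual Pieri-type formula
$$p_r \cdot s_\mu = \sum_\kappa (-1)^{l(\kappa/\mu)} s_\kappa,$$
the sum running over partitions $\kappa \supset \mu$ with $\kappa/\mu$ a rim $r$-hook. The cleanest route is via the bialternant (Weyl character) formula $s_\mu = a_{\mu + \delta}/a_\delta$, where $a_\alpha = \det(x_i^{\alpha_j})$ and $\delta = (N-1, \ldots, 1, 0)$ for $N$ sufficiently many variables. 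Since $p_r \cdot a_{\mu+\delta}$ expands by linearity of the determinant as $\sum_{i=1}^N a_{\mu + \delta + r e_i}$, any term whose exponent vector has a repeated entry vanishes by antisymmetry, while each surviving term must be sorted back into a strictly decreasing sequence, producing $\pm a_{\kappa + \delta}$ for a unique $\kappa$; the bijection between the surviving indices and rim $r$-hooks $\kappa/\mu$ is straightforward to set up.

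The main obstacle, and the real combinatorial content, is the sign analysis: one must identify the sign of the sorting permutation with $(-1)^{l(\kappa/\mu)}$. Moving the inflated exponent $\mu_i + (N-i) + r$ to its correct rank amounts to transposing it past all intermediate entries, and these transpositions correspond, in the Young diagram picture, to crossing precisely the rows strictly below the top row of the rim hook down to its bottom row; the number of such rows is by definition the leg-length $l(\kappa/\mu)$. Once this sign is pinned down, dividing through by $a_\delta$ yields the desired expansion of $p_r \cdot s_\mu$, and substituting back into the Frobenius calculation above completes the proof.
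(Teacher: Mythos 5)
The paper does not prove this lemma: it states the Murnaghan--Nakayama rule as a classical fact and cites Sagan's book \cite[p.~180]{Sa}. So there is no in-paper proof to compare against, and your attempt should be judged on its own.

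Your argument is a correct outline of the standard symmetric-function proof (it is essentially Macdonald I.(3.11) or Stanley, EC2 \S7.17). The reduction $\chi_\lambda(\rho\sigma)=\langle s_\lambda, p_r p_{\nu'}\rangle=\langle p_r^\perp s_\lambda, p_{\nu'}\rangle$ via the Frobenius characteristic map is correct, as is the reformulation in terms of the dual Pieri rule $p_r s_\mu = \sum_\kappa (-1)^{\mathrm{ht}(\kappa/\mu)} s_\kappa$ over rim $r$-hooks; and the bialternant computation $p_r\, a_{\mu+\delta}=\sum_{j=1}^N a_{\mu+\delta+r e_j}$ is the right engine. The one place you gesture rather than compute is, as you yourself flag, the sign. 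To close it: write $\beta=\mu+\delta$, a strictly decreasing $N$-tuple. After adding $r$ to $\beta_j$, either the result collides with some $\beta_{i}$ ($i\ne j$) and the determinant vanishes, or there is a unique index $i\le j$ with $\beta_{i-1}>\beta_j+r>\beta_i$. Sorting then moves entry $j$ to slot $i$ and shifts slots $i,\dots,j-1$ down by one; that is a cycle of length $j-i+1$, of sign $(-1)^{j-i}$. Translating $\beta$-sets to Young diagrams, the resulting partition $\kappa$ satisfies $\kappa/\mu$ is a rim $r$-hook occupying rows $i$ through $j$, so its leg-length (number of rows minus one) is exactly $j-i$. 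This identifies the sorting sign with $(-1)^{l(\kappa/\mu)}$ and completes the proof. Stated this way, the sketch is airtight; the remaining work is merely to record the bijection between surviving indices $j$ and rim $r$-hooks of $\mu$, which is the standard ``first-column hook lengths'' / abacus correspondence.

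Two small remarks. First, you should say that the bialternant argument is carried out in $N\ge \ell(\lambda)+r$ variables and is stable in $N$, so specializing back to $\Lambda$ is harmless. Second, Sagan's own proof (the source cited in the paper) is different in flavor: it proceeds via the Jacobi--Trudi determinant and a recursive skew-shape argument rather than through power sums and the bialternant. Your route via $p_r^\perp$ is arguably cleaner for the purposes of Lemma~\ref{values}, since it manifestly interacts well with the iterated stripping of cycles used there, but both are standard and valid.
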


In order to apply the Murnaghan-Nakayama rule it is useful to estimate
the number of rim $r$-hooks in a tableau. We quote Lemma 2.11
from \cite{LiSh}:

\begin{lem}\label{amazing} For any positive integer $r$ and any partition
$\la$ of $n$, the number of rim $r$-hooks in a $\la$-tableau is at most
$\sqrt{2n}$.
\end{lem}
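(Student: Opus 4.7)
The plan is to reduce the statement to a well-known combinatorial bijection and then run a short anti-chain argument inside the Young diagram of $\la$. First I would invoke the classical identification of rim $r$-hooks of $\la$ with cells $(i,j)$ of the Young diagram whose hook length $h(i,j) = \la_i - j + \la'_j - i + 1$ equals $r$; this identification is standard (see, e.g., \cite{Sa}) and is essentially the combinatorial content underlying the Murnaghan--Nakayama rule of Lemma~\ref{murn}. Writing $C_r(\la)$ for this set of cells, it then suffices to prove $|C_r(\la)| \le \sqrt{2n}$.

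Next I would observe that along any fixed row $i$ the function $j \mapsto h(i,j)$ is strictly decreasing: as $j$ grows by one, $\la_i - j$ drops by one and $\la'_j$ is non-increasing. The symmetric statement holds along columns. Hence $C_r(\la)$ contains at most one cell per row and at most one per column. Listing its elements as $(i_1, j_1), \ldots, (i_k, j_k)$ with $i_1 < \ldots < i_k$, I would show that necessarily $j_1 > \ldots > j_k$: if instead $j_1 \le j_m$ for some $m>1$, then $\la_{i_1} \ge \la_{i_m} \ge j_m$, so the cell $(i_1, j_m)$ is in the diagram and a short computation gives $h(i_1, j_m) = (\la_{i_1} - \la_{i_m}) + (i_m - i_1) + r > r$, which is incompatible with $h(i_1, j_1) = r$ and the strict monotonicity along row $i_1$.

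Finally, distinctness of the rows forces $i_m \ge m$, and strict decrease of the columns forces $j_m \ge k - m + 1$. Since the cell $(i_m, j_m)$ is in the diagram, row $i_m$ contains at least $j_m$ cells, and summing over the $k$ distinct rows yields
\[
n \;\ge\; \sum_{m=1}^{k} \la_{i_m} \;\ge\; \sum_{m=1}^{k} (k - m + 1) \;=\; \frac{k(k+1)}{2},
\]
hence $k^2 \le k(k+1) \le 2n$ and $k \le \sqrt{2n}$. The only nontrivial ingredient in this plan is the bijection between rim $r$-hooks and hook-length-$r$ cells, which is classical and well documented in \cite{Sa}; once it is invoked, the remaining argument is a short counting argument, so I do not anticipate any real obstacle.
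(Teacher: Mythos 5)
The paper does not prove this lemma; it is quoted verbatim from \cite{LiSh} (Lemma~2.11 there), so there is no internal proof to compare against. Your argument is a correct, self-contained proof of the statement. The reduction to cells of hook length $r$ via the classical bijection, the strict monotonicity of $h(i,j)=\la_i-j+\la'_j-i+1$ along rows and columns (giving at most one relevant cell per row and per column), the anti-chain observation $i_1<\cdots<i_k \Rightarrow j_1>\cdots>j_k$, and the final count
\[
n \;\ge\; \sum_{m=1}^{k}\la_{i_m} \;\ge\; \sum_{m=1}^{k}(k-m+1) \;=\; \tfrac{k(k+1)}{2}
\]
are all sound; note that you in fact obtain the slightly sharper bound $k(k+1)\le 2n$. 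One small stylistic point: you establish $j_1>j_m$ for all $m>1$, but the chain you use in the final sum is the full $j_1>\cdots>j_k$; it is worth saying explicitly that the identical computation with $(i_1,j_1)$ replaced by an arbitrary $(i_a,j_a)$, $a<m$, gives $j_a>j_m$ for every such pair, which is what is actually needed. With that clarification the argument is airtight and matches the spirit of the proof in \cite{LiSh}.
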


We now deduce the following.

\begin{lem}\label{values}

Let $\pi = \rho \s \in S_n$ be a permutation of order $m$,
where $\rho$ has cycle-shape $(m^a)$ and $\s$ permutes the remaining $n-ma$
points. Let $C(\s)$ be the number of cycles in $\s$. Then for any $\chi
\in \Irr(S_n)$ we have
\[
|\chi(\pi)| \leq b(2n)^{C(\s)/2} \chi(1)^{1/m}n^{1/2}.
\]
\end{lem}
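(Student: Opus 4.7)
The plan is to apply the Murnaghan--Nakayama rule (Lemma~\ref{murn}) iteratively to peel off the cycles of $\s$ one at a time, reducing to a character value at a permutation of cycle shape $(m^a)$ to which Proposition~\ref{FL} (Fomin--Lulov) can be applied.

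Write $\s = \s_1 \s_2 \cdots \s_C$ as a product of disjoint cycles, where $C = C(\s)$ and $\s_i$ has length $r_i$. Applying Lemma~\ref{murn} to the $r_1$-cycle $\s_1$ expresses $\chi_\la(\pi)$ as a signed sum of values $\chi_{\la \setminus \nu_1}(\s_2 \cdots \s_C \rho)$ indexed by rim $r_1$-hooks $\nu_1$ of a $\la$-tableau. By Lemma~\ref{amazing} there are at most $\sqrt{2n}$ such rim hooks, so
\[
|\chi_\la(\pi)| \le \sqrt{2n}\, \max_{\mu^{(1)}} \bigl|\chi_{\mu^{(1)}}(\s_2 \cdots \s_C \rho)\bigr|,
\]
the maximum taken over partitions $\mu^{(1)}$ obtained from $\la$ by removing a rim $r_1$-hook. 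Iterating $C$ times, using that each intermediate symmetric group has order at most $n$ (so $\sqrt{2n}$ remains a valid upper bound at every step), we arrive at
\[
|\chi_\la(\pi)| \le (2n)^{C/2} \max_{\mu} |\chi_\mu(\rho)|,
\]
where $\mu$ ranges over partitions of $ma$ obtained by successively removing rim $r_i$-hooks from $\la$.

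Since $\rho$ has cycle shape $(m^a)$ in $S_{ma}$, Proposition~\ref{FL} gives
\[
|\chi_\mu(\rho)| \le b(ma)^{1/2} \chi_\mu(1)^{1/m} \le b\, n^{1/2} \chi_\mu(1)^{1/m}.
\]
To finish, I would show that $\chi_\mu(1) \le \chi_\la(1) = \chi(1)$. Each rim hook can be stripped box by box, starting from its "leg" end, so one descends from $\la$ to $\mu$ through a chain of valid Young diagrams in which each step removes a single corner box. The branching rule $\chi_{\nu} \downarrow_{S_{k-1}} = \sum_{\nu^-} \chi_{\nu^-}$ then implies $\chi_{\nu^-}(1) \le \chi_\nu(1)$ at every step, hence $\chi_\mu(1) \le \chi(1)$ after iteration. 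Combining the estimates yields
\[
|\chi_\la(\pi)| \le (2n)^{C/2} \cdot b\, n^{1/2} \chi(1)^{1/m},
\]
which is exactly the claimed bound.

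The proof is essentially bookkeeping: the two nontrivial inputs (Murnaghan--Nakayama together with the rim-hook count of Lemma~\ref{amazing}, and the Fomin--Lulov estimate) are already available, and everything else is an iteration. The only mild subtlety is justifying the monotonicity $\chi_\mu(1) \le \chi(1)$ after removing several rim hooks; the box-by-box corner-removal argument combined with the branching rule handles this cleanly, and is the only place where one has to argue rather than cite.
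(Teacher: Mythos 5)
Your argument is essentially the paper's: peel off the cycles of $\s$ one at a time with Murnaghan--Nakayama, bound the branching by $\sqrt{2n}$ at each step via Lemma~\ref{amazing}, and finish by applying Fomin--Lulov to $\rho$ in $S_{ma}$; the paper keeps the resulting expression as a sum of at most $(2n)^{C(\s)/2}$ terms rather than taking a maximum, but the estimate is identical. One caveat: the paper merely asserts $\chi_i(1)\le\chi(1)$, and you try to justify it, but your recipe of stripping each rim hook ``starting from its leg end'' is not quite right --- the southwest-most box of a rim hook need not be a corner of the ambient diagram (try $\la=(3,3,1)$ with the rim $3$-hook $\{(1,3),(2,3),(2,2)\}$: neither end is a corner of $\la$, and one must remove $(2,3)$ first). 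The monotonicity $\chi_\mu(1)\le\chi(1)$ is nonetheless true and your branching-rule idea is the right one; a correct box order is obtained by taking any standard filling of the skew shape $\la/\mu$ and removing boxes in decreasing order of their entries (equivalently, at each stage remove any box of the remaining hook remnant that is a removable corner of the current partition --- one always exists).
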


\begin{proof}
Applying the Murnaghan-Nakayama Rule repeatedly for each cycle in $\s$
and Lemma~\ref{amazing}, we see that
\[
|\chi(\pi)| \leq \sum |\chi_i(\rho)|,
\]
where $\c_i \in \Irr(S_{ma})$,
the sum has at most $(2n)^{C(\s)/2}$ terms, and $\chi_i(1) \leq
\chi(1)$. By Proposition~\ref{FL},
$|\chi_i(\rho)| \leq b\chi_i(1)^{1/m}n^{1/2} \le b\chi(1)^{1/m}n^{1/2}$.
This implies
\[
|\chi(\pi)| \leq b(2n)^{C(\s)/2}\chi(1)^{1/m}n^{1/2}
\]
as required.
\end{proof}

For a permutation $\pi \in S_n$ we let $\pi^{S_n}$ denote its
conjugacy class.
We now deduce the following.

\begin{prop}\label{T}
Let $m \geq 2$ be an integer.
Then for any positive integer $n$, any permutation $\pi \in S_n$
satisfying $\pi^m = 1$ and any character $\c \in \Irr(S_n)$, we have
\[
|\pi^{S_n}|\cdot |\c(\pi)| < b (n^n)^{1-1/m} \cdot \c(1)^{1/m}\cdot
(2e)^n.
\]
\end{prop}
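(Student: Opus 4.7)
The plan is to combine Lemma~\ref{values} with a centralizer estimate. Applying Lemma~\ref{values} to $\pi = \rho\sigma$ (with $\rho$ of cycle shape $(m^a)$ and $\sigma$ permuting the remaining $r = n-ma$ points) gives
\[
|\chi(\pi)| \le b(2n)^{C(\sigma)/2}\chi(1)^{1/m}n^{1/2},
\]
so that the factor $\chi(1)^{1/m}$ is common to both sides of the desired inequality, and the proposition reduces to the purely combinatorial estimate
\[
|\pi^{S_n}|\cdot(2n)^{C(\sigma)/2}\cdot n^{1/2} \le n^{n(1-1/m)}(2e)^n. \quad (\star)
\]

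To prove $(\star)$ I would factor the class size as $|\pi^{S_n}| = \binom{n}{r}\cdot|\rho^{S_{n-r}}|\cdot|\sigma^{S_r}|$, absorbing $\binom{n}{r}\le 2^n$ into the $2^n$ in $(2e)^n$. For the $\rho$-part, Stirling's formula applied to $|\rho^{S_{n-r}}| = (n-r)!/(m^a a!)$ yields
\[
|\rho^{S_{n-r}}| \le C\sqrt{m}\,(ma/e)^{a(m-1)} = C\sqrt{m}\,(n-r)^{(n-r)(1-1/m)}e^{-(n-r)(1-1/m)},
\]
using $ma = n-r$ and $a(m-1) = (n-r)(1-1/m)$.

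For the $\sigma$-part I would bound the centralizer $\prod_{d|m,\,d<m} d^{c_d}c_d!$ (with $c_d$ the number of $d$-cycles of $\sigma$, so that $C(\sigma) = \sum_d c_d \le r$) from below via the Stirling inequality $c_d! \ge (c_d/e)^{c_d}$, and combine this with the factor $(2n)^{C(\sigma)/2}$ through a weighted AM--GM estimate to obtain
\[
|\sigma^{S_r}|\cdot(2n)^{C(\sigma)/2} \le r^{r(1-1/m)}(ce)^{n}
\]
for an absolute constant $c$. The $\rho$ and $\sigma$ bounds then combine via the convexity inequality
\[
(n-r)^{(n-r)(1-1/m)}\cdot r^{r(1-1/m)} \le n^{n(1-1/m)},
\]
which follows from the convexity of $x\mapsto x\log x$: the function $(n-r)\log(n-r)+r\log r$ is convex in $r$ on $[0,n]$ and attains its maximum at the endpoints, both equal to $n\log n$.

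Multiplying everything out and collecting constants, the subexponential factors $\sqrt{m}$ and $n^{1/2}$ together with the exponentials $e^{-(n-r)(1-1/m)}(ce)^n$ fit inside the $(2e)^n$ envelope for all $n\ge 1$ and $m\ge 2$, yielding $(\star)$. The main technical obstacle is precisely the joint estimate for $|\sigma^{S_r}|\cdot(2n)^{C(\sigma)/2}$: large $C(\sigma)$ inflates $(2n)^{C(\sigma)/2}$, but simultaneously forces $\sigma$ to have many short cycles, which enlarges its centralizer and shrinks $|\sigma^{S_r}|$; balancing these two competing effects uniformly in the divisor structure of $m$ is the heart of the computation and requires careful Stirling bookkeeping.
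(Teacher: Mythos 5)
The reduction to $(\star)$ is correct, and so is the factorization $|\pi^{S_n}|=\binom{n}{r}|\rho^{S_{n-r}}||\sigma^{S_r}|$ and the convexity step $(n-r)^{(n-r)(1-1/m)}r^{r(1-1/m)}\le n^{n(1-1/m)}$. However, the plan has a genuine gap at the step you yourself flag as ``the heart of the computation'': the estimate
$$|\sigma^{S_r}|\,(2n)^{C(\sigma)/2}\le r^{r(1-1/m)}(ce)^n$$
is asserted, not proved, and it is not a routine consequence of ``centralizer lower bound plus weighted AM--GM.'' Worse, even granting it, the constant budget does not close as described. Testing at $r=n$, $\sigma=1$ forces $c\ge\sqrt 2$ in your $\sigma$-bound, while you also propose to absorb $\binom{n}{r}\le 2^n$ separately; together these give a factor $(2ce)^n>(2e)^n$, which overruns the envelope. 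The only escape hatch in your outline is $e^{-(n-r)(1-1/m)}$, but that vanishes precisely in the regime $r\approx n$ where the $\sigma$-bound is tight. So the plan would require replacing the crude $\binom{n}{r}\le 2^n$ by a bound that degrades as $r$ moves away from $0$ or $n$, and proving the $\sigma$-estimate with an explicit constant---substantially more work than is indicated.

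The paper's proof avoids the $\rho/\sigma$ split entirely. Writing $\pi$ with cycle-shape $(m_1^{a_1},\dots,m_k^{a_k})$, $m_1=m$, set $A=\sum_{i\ge 2}a_i$ (so $A=C(\sigma)$) and $B=\sum_{i\ge 1}a_i$. One bounds $|\pi^{S_n}|\le n!/\prod a_i!$ and uses the elementary inequality $n!/\prod a_i!<e^B n^{n-B}$ (from $n^a/a!<e^a$). Combining with Lemma~\ref{values} gives the factor $n^{n-B+A/2}$, and the single arithmetic observation that proper divisors of $m$ are $\le m/2$ yields $B-A/2=a_1+A/2\ge n/m$. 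The constant chase is then trivial: $e^B2^{A/2}n^{1/2}\le e^n2^{n/2}2^{n/2}=(2e)^n$. No Stirling, no binomial coefficient, no convexity, and no separate $\sigma$-estimate are needed. I would recommend either carrying out your $\sigma$-bound in full with a careful $r$-dependent replacement for $\binom{n}{r}\le 2^n$, or---more efficiently---adopting the $e^Bn^{n-B}$ device, which collapses the whole argument to the one inequality $B-A/2\ge n/m$.
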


\begin{proof}
Let $\pi$ have cycle-shape $(m_1^{a_1},\ldots ,m_k^{a_k})$, where $\sum
m_ia_i = n$, $m_1>m_2> \ldots > m_k$ and $m_1=m$ (allowing the possibility
that $a_1 = 0$). Set $A = \sum_{i=2}^k a_i$. Since
\[
|\pi^{S_n}| = \frac{n!}{\prod_{i=1}^k m_i^{a_i} \prod_{i=1}^k a_i!}
\le  \frac{n!}{\prod_{i=1}^k a_i!},
\]
Lemma~\ref{values} implies that
\begin{equation}\label{b1}
|\pi^{S_n}| \cdot |\c(\pi)| \leq \frac{n!}{\prod_{i=1}^k a_i!}
(2n)^{A/2} b \chi(1)^{1/m} n^{1/2}.
\end{equation}

Since $\frac{n^a}{a!} < e^a$ for any positive integer $a$, we have
$\frac{n^{\sum a_i}}{\prod a_i!} < e^{\sum a_i}$. Let
$B = \sum_{i=1}^k a_i$. Then
\[
\frac{n!}{\prod_{i=1}^k a_i!} \le \frac{n^n}{\prod_{i=1}^k a_i!}
= \frac{n^B}{\prod_{i=1}^k a_i!} n^{n-B} < e^B n^{n-B}.
\]

In view of (\ref{b1}) this implies
\[
|\pi^{S_n}|\cdot |\c(\pi)| <  e^B n^{n-B}
(2n)^{A/2} b \chi(1)^{1/m} n^{1/2} \le b n^{n-B+A/2} \c(1)^{1/m}
e^B 2^{A/2} n^{1/2}.
\]
Now, for $i \ge 2$ we have
$m_i \le m/2$ (since $m_i$ is a proper divisor of $m$), so
$a_1m + A m/2 \ge \sum_{i=1}^k a_i m_i = n$. Thus
\[
B - A/2 = a_1 + A/2 \ge n/m.
\]
We also have $e^B 2^{A/2} n^{1/2} \le e^n 2^{n/2} 2^{n/2} = (2e)^n$.
Hence
\[
|\pi^{S_n}|\cdot |\c(\pi)| < b n^{n-n/m} \c(1)^{1/m} (2e)^n,
\]
which proves the result.
\end{proof}

We shall also use the following.

\begin{lem}\label{chardegs} Fix a real number $s>0$. Then
\[
\sum_{\chi \in \Irr(S_n)} \chi(1)^{-s} \go 2 \;as\; n \go \infty.
\]
In particular, $\sum_{\chi \in \Irr(S_n)} \chi(1)^{-s} \le C(s)$
for all $n$, where $C(s)$ is a number depending only on $s$.
\end{lem}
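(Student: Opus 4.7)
The plan is to isolate the contributions of the trivial character and the sign character of $S_n$ (both of degree $1$), which together account for $2$, and to show that the remaining tail sum over non-linear irreducible characters tends to zero. Setting
$$T(n,s) := \sum_{\chi \in \Irr(S_n),\; \chi(1) > 1} \chi(1)^{-s},$$
the assertion $\sum_\chi \chi(1)^{-s} \to 2$ is equivalent to $T(n,s) \to 0$.

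A classical minimal-degree result (via the hook length formula applied to partitions $\lambda \neq (n),(1^n)$) gives that every non-linear $\chi \in \Irr(S_n)$ satisfies $\chi(1) \geq n-1$. This single observation is not sufficient on its own, because the number of non-linear characters $p(n)-2$ grows like $e^{c\sqrt n}$, which defeats any naive application of $\chi(1)^{-s} \leq (n-1)^{-s}$. One therefore needs refined degree bounds for typical, not merely extremal, partitions.

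The crucial technical input is the following uniform estimate (a Witten-type zeta bound for symmetric groups), which I would either establish directly or quote from the Liebeck--Shalev circle of ideas: for any fixed $t > 0$ there exists a constant $C'(t)$, independent of $n$, such that
$$\sum_{\chi \in \Irr(S_n)} \chi(1)^{-t} \leq C'(t).$$
The natural route is a dyadic decomposition of $\Irr(S_n)$ according to the size of $\chi(1)$: using Rasala-type lower bounds on $\chi_\lambda(1)$ that force partitions whose Young diagram departs significantly from the extremal hooks $(n),(1^n)$ to produce very large degrees, one shows that the number of characters of degree at most $M$ grows at most polynomially in $M$, so that the dyadic tails are summable once $t > 0$.

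Granted the uniform bound, the lemma is a one-line H\"older-type estimate: choose any $0 < t < s$; since $\chi(1) \geq n-1$ on the support of $T(n,s)$,
$$T(n,s) \;=\; \sum_{\chi(1)>1} \chi(1)^{-(s-t)}\chi(1)^{-t} \;\leq\; (n-1)^{-(s-t)}\, C'(t) \;\longrightarrow\; 0.$$
The ``in particular'' clause then follows automatically, since a sequence converging to $2$ is bounded and the sum is finite for each individual $n$. The main obstacle is thus the uniform zeta bound; once it is in hand, the passage to the stated lemma is cheap.
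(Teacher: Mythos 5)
The paper does not actually prove this lemma; it is imported wholesale from Liebeck--Shalev \cite{LiSh}, Theorem~1.1, which states precisely the result (convergence to $2$ together with uniformity in $n$). Your fallback position --- quoting the uniform zeta bound from ``the Liebeck--Shalev circle of ideas'' --- is therefore the same move the paper makes, and your H\"older reduction, while valid, does essentially no work: the input you assume, namely that $\sum_{\chi\in\Irr(S_n)}\chi(1)^{-t}\le C'(t)$ for every $t>0$, is already the ``in particular'' clause of the lemma with $s$ renamed to $t$, so you are deducing the lemma from the lemma.

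Where you do try to be self-contained --- the dyadic decomposition of $\Irr(S_n)$ by degree --- there is a genuine quantitative gap. If the number $N(M)$ of $\chi\in\Irr(S_n)$ with $\chi(1)\le M$ satisfies only a fixed polynomial bound $N(M)\le c\,M^a$ (uniformly in $n$) for some $a>0$, then the dyadic sum $\sum_j N(2^{j+1})\,2^{-jt}$ is dominated by $c\,2^a\sum_j 2^{j(a-t)}$, which converges only for $t>a$; it does not become ``summable once $t>0$'' as you assert. Since the lemma must hold for every $s>0$ (and is applied in the paper with $s=1/42$), one needs the strictly stronger statement that $N(M)$ is subpolynomial, i.e.\ $N(M)=O(M^\epsilon)$ for every $\epsilon>0$ uniformly in $n$. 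Establishing this --- by a careful analysis of partitions whose first row or first column is close to $n$, not merely the minimal nontrivial degree bound $\chi(1)\ge n-1$ --- is the actual substance of the Liebeck--Shalev argument, and it is not supplied by the vague ``Rasala-type'' invocation in your sketch. As written, the self-contained route would fail for small $s$ and the citation route is circular with respect to what the lemma claims.
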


This result is proved in \cite{LiSh} (see Theorem 1.1 there)
following earlier results
for the case $s \ge 1$. Here we apply it for
a rather small value of $s$, namely $s = \frac{1}{42}$,
which is the minimal value of $\mu(\G)$ for a Fuchsian group $\G$.

Next we study the space of homomorphisms ${\rm Hom}(\G,S_n)$
from a Fuchsian group $\G$ to $S_n$ by splitting it into subspaces
whose sizes can be estimated.
Let $g, d, s, t, m_1, \ldots , m_d$ be as in (11) and let
$\mu = \mu(\G)$.

Let $g_1, \ldots , g_d \in S_n$ be permutations satisfying
$g_i^{m_i} = 1$ for each $i$.
Let $C_i = g_i^{S_n}$ ($1 \leq i \leq d$) be their conjugacy classes
in $S_n$. Write ${\bf C} = (C_1,\ldots ,C_d)$. Define
\[
{\rm Hom}_{\bf C}(\G,S_n) = \{ \phi \in {\rm Hom}(\G,S_n)\;:\; \phi(x_i)
\in C_i \hbox{ for }1\leq i \leq d\}.
\]

Suppose now that $\G$ is a uniform lattice.
The following formula, which essentially dates back to Hurwitz \cite{Hu},
connects $|{\rm Hom}_{\bf C}(\G,S_n)|$ with characters
of symmetric groups:
\begin{equation}\label{b2}
|{\rm Hom}_{\bf C}(\G,S_n)| =
(n!)^{2g-1} {|C_1|\ldots |C_d|} \sum_{\c \in \Irr(S_n)}
\frac{\c(g_1)\cdots \c(g_d)}{\chi(1)^{d-2+2g}}.
\end{equation}
This formula includes the case $d=0$ in which $\G$ is a surface group,
${\rm Hom}_{\bf C} = {\rm Hom}$ and empty products are taken to be 1.

In fact, formula (\ref{b2}) holds for any finite group $G$ in place of $S_n$,
its proof is carried out by counting solutions in $G$ of the equations
corresponding to the defining relations of $\G$. See, for instance,
Section 3 of \cite{LiSh} for details and for a similar formula for
non-oriented Fuchsian groups.

\begin{lem}\label{C} With the above notation we have
\[
|{\rm Hom}_{\bf C}(\G,S_n)| \le c_1 b^d (n^n)^{\mu + 1} (2e)^{dn},
\]
where $c_1$ is some absolute constant.
\end{lem}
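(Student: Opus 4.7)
My plan is to derive the bound directly from the Hurwitz-type formula (\ref{b2}), combining Proposition~\ref{T} on the conjugacy-class/character-value products with Lemma~\ref{chardegs} to tame the resulting sum over $\mathrm{Irr}(S_n)$.

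First, for each index $i \in \{1,\ldots,d\}$ I would apply Proposition~\ref{T} with $\pi = g_i$ and $m = m_i$, obtaining
\[
|C_i| \cdot |\chi(g_i)| < b\, (n^n)^{1 - 1/m_i}\, \chi(1)^{1/m_i}\, (2e)^n.
\]
Taking the product over $i$, plugging into (\ref{b2}), and applying the triangle inequality inside the sum over $\chi$ yields
\[
|\mathrm{Hom}_{\mathbf C}(\G, S_n)| \le (n!)^{2g-1}\, b^d\, (n^n)^{d - \sum_i 1/m_i}\, (2e)^{dn}\, \sum_{\chi \in \mathrm{Irr}(S_n)} \chi(1)^{\sum_i 1/m_i - (d - 2 + 2g)}.
\]
For a uniform lattice one has $s = t = 0$, so the exponent of $\chi(1)$ is precisely $-\mu$. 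Since every Fuchsian group satisfies $\mu \ge 1/42$ (the $(2,3,7)$-triangle bound), the character sum is majorised by $\sum_{\chi} \chi(1)^{-1/42}$, which is uniformly bounded by Lemma~\ref{chardegs}, say by $C(1/42)$.

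The remaining task is a routine reconciliation of the remaining factors. I would compare $(n!)^{2g-1}(n^n)^{d - \sum_i 1/m_i}$ with the target $(n^n)^{\mu+1} = (n^n)^{2g-1+d-\sum_i 1/m_i}$. For $g \ge 1$ this reduces to the trivial inequality $n! \le n^n$. For $g = 0$ the inverse factorial needs offsetting: Stirling's bound $n! \ge (n/e)^n$ gives $(n!)^{-1} \le e^n (n^n)^{-1}$, and the spurious $e^n$ is absorbed into $(2e)^{dn}$, noting that $\mu > 0$ together with $1/m_i \le 1/2$ forces $d \ge 3$ for a genus-zero Fuchsian group.

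No step presents a real obstacle. The one delicate moment is the upgrade from Proposition~\ref{T}, which is a \emph{per-character} estimate, into the \emph{uniform} estimate required by Lemma~\ref{C}. That upgrade hinges on the character sum being uniformly summable, which in turn uses both Lemma~\ref{chardegs} and the crucial input that $\mu$ is bounded away from $0$ over \emph{all} Fuchsian groups. Identifying the exponent on $\chi(1)$ as exactly $-\mu$ is the essential piece of bookkeeping that makes the whole mechanism work.
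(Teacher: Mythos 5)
Your uniform-lattice argument matches the paper's: apply Proposition~\ref{T} to each $g_i$ in formula (\ref{b2}), recognise the exponent $\sum_i 1/m_i - (d-2+2g) = -\mu$, and control the character sum via Lemma~\ref{chardegs} together with the Siegel bound $\mu \ge 1/42$.

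The gap is the case $s+t>0$, which Lemma~\ref{C} must also cover. Formula~(\ref{b2}) is derived from the relation $x_1\cdots x_d[a_1,b_1]\cdots[a_g,b_g]=1$ and is stated by the paper only for uniform $\G$; when $s+t>0$ the long relation in (11) allows one generator to be eliminated, so $\G$ is a free product $Z_{m_1}*\cdots*Z_{m_d}*F_r$ with $r=2g+s+t-1$ and (\ref{b2}) simply does not apply. The paper handles this branch with the separate identity $|\mathrm{Hom}_{\bf C}(\G,S_n)|=(n!)^r\prod_i|C_i|$ (formula (\ref{b3})), the per-class estimate $|C_i|\le e^n(n^n)^{1-1/m_i}$ obtained by the same factorial computation as in Proposition~\ref{T}, and the identity $r+\sum_i(1-1/m_i)=\mu+1$. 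Your proof, as written, does not treat this case, so it only establishes the lemma for uniform Fuchsian groups.

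A smaller point: your "absorption" of the Stirling factor $e^n$ in the genus-zero case is not literal. One has $e^n(2e)^{dn}=(2e)^{(d+1)n}$, which is not $\le c_1(2e)^{dn}$ for any absolute constant $c_1$; what saves the argument is precisely the observation $d\ge 3$, giving $(2e)^{(d+1)n}\le((2e)^{4/3})^{dn}$, so the lemma holds with a slightly larger absolute base in place of $2e$ (harmless for the downstream applications, where only the form $C^{d_1 n}$ matters). That said, your explicit attention to $g=0$ is a genuine improvement over the paper, which writes $(n!)^{2g-1}\le(n^n)^{2g-1}$ without comment even though that inequality reverses when $g=0$.
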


\begin{proof}
Suppose first that $\G$ is uniform. We use formula (\ref{b2}) above.
By Proposition~\ref{T}, we have
\[
|C_i| |\c(g_i)| \le b(2e)^n (n^n)^{1 - 1/m_i} \c(1)^{1/m_i}
\]
for all $i = 1, \cdots , d$.
This yields
\[
|{\rm Hom}_{\bf C}(\G,S_n)| \le
(n^n)^{2g-1}b^d(2e)^{dn} (n^n)^{\sum_{i=1}^d (1 - 1/m_i)}
\sum_{\c \in \Irr(S_n)}
\frac{\c(1)^{\sum_{i=1}^d 1/m_i}}{\c(1)^{d-2+2g} }.
\]
Since $\mu = 2g-2 + \sum_{i=1}^d (1-1/m_i)$, we conclude that
\[
|{\rm Hom}_{\bf C}(\G,S_n)| \le
b^d(2e)^{dn} (n^n)^{\mu + 1} \sum_{\c \in \Irr(S_n)} \c(1)^{-\mu}.
\]
By Lemma~\ref{chardegs} and a well known Siegel's inequality $\mu \ge 1/42$,
we have
\[
\sum_{\c \in \Irr(S_n)} \c(1)^{-\mu} \le \sum_{\c \in \Irr(S_n)} \c(1)^{-1/42}
\le c_1,
\]
where $c_1 = C(1/42)$.
The result follows by combining the two inequalities above.

It remains to deal with Fuchsian groups with $s+t > 0$.
Let $r = 2g + s + t -1$, $Z_m$ denote
a cyclic group of order $m$ and $F_r$ a free group of rank $r$.
Then we have a free product decomposition
\[
\G \cong Z_{m_1} * \cdots * Z_{m_d} * F_r.
\]
It follows immediately that
\begin{equation}\label{b3}
|{\rm Hom}_{\bf C}(\G,S_n)| = (n!)^r \prod_{i=1}^d |C_i|.
\end{equation}

We claim that if $\pi \in S_n$ satisfies $\pi^m = 1$, then
\begin{equation}\label{b4}
|\pi^{S_n}| \le e^n (n^n)^{1-1/m}.
\end{equation}
Indeed, let $a_1, \ldots , a_k$ be the multiplicities of the
cycle lengths of $\pi$ and let $B = \sum_{i=1}^k a_i$,
the number of cycles in $\pi$. Then, as in the proof of
Proposition~\ref{T}, we have
\[
|\pi^{S_n}| \le \frac{n!}{\prod a_i!} \le e^B n^{n-B}.
\]
Since all cycle lengths of $\pi$ are at most $m$, we have
$n/m \le B \le n$. Thus $e^B n^{n-B} \le e^n n^{n-n/m}$,
proving the claim.

Recall that $C_i = g_i^{S_n}$ and $g_i^{m_i} = 1$.
Therefore, using the claim, we obtain
\[
|C_i| \le e^n (n^n)^{1-1/m_i}
\]
for all $i = 1, \ldots , d$.
Plugging this in (\ref{b3}) we conclude that
\[
|{\rm Hom}_{\bf C}(\G,S_n)| = (n!)^r e^{dn} (n^n)^{\sum (1-1/m_i)}
\le (n^n)^{r + \sum (1-1/m_i)} e^{dn}.
\]
Since $r + \sum (1-1/m_i) = \mu+1$ the result follows
(with an even better upper bound).
\end{proof}

We can now provide upper bounds for $|{\rm Hom}(\G,S_n)|$.
Given $d$ define $d_1 = \max (d,1)$.

\begin{lem}\label{upper} There exists an absolute constant $c_2$ such
that with the above notation we have
\[
|{\rm Hom}(\G,S_n)| \leq (n^n)^{\mu + 1} c_2^{d_1 n}
\]
for all $n$.
\end{lem}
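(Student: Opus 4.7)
The plan is to reduce to Lemma \ref{C} by partitioning the homomorphism set according to the conjugacy classes of the images of the elliptic generators.

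First I would write
\[
|{\rm Hom}(\G,S_n)| = \sum_{\bf C} |{\rm Hom}_{\bf C}(\G,S_n)|,
\]
where the sum runs over all admissible tuples ${\bf C} = (C_1,\ldots,C_d)$ of conjugacy classes in $S_n$, i.e.\ those for which every $\pi \in C_i$ satisfies $\pi^{m_i}=1$. (When $d=0$ this is a single term and there is nothing further to do: Lemma \ref{C} directly gives $|{\rm Hom}(\G,S_n)| \le c_1 (n^n)^{\mu+1}$, which can be absorbed into $c_2^{n}$ since $d_1 = 1$.)

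For $d \ge 1$, I would next bound the number $N_d(n)$ of admissible tuples. Since an element of $S_n$ with $\pi^{m_i}=1$ has cycle type a partition of $n$ whose parts all divide $m_i$, the number of choices for each $C_i$ is at most $p(n)$, the total number of partitions of $n$. Using the classical estimate $p(n) \le e^{K\sqrt{n}}$ for some absolute $K$, we obtain
\[
N_d(n) \le p(n)^d \le e^{Kd\sqrt{n}}.
\]
Combining this with Lemma \ref{C} yields
\[
|{\rm Hom}(\G,S_n)| \le c_1 \bigl(b \cdot e^{K\sqrt n} \cdot (2e)^n\bigr)^{d} (n^n)^{\mu+1}.
\]
Since $e^{K\sqrt{n}} \le e^{n}$ for all $n \ge K^2$ (and the finitely many small $n$ can be absorbed into the constant), the bracketed factor is at most $c_2^{n}$ for a suitable absolute constant $c_2$, giving
\[
|{\rm Hom}(\G,S_n)| \le (n^n)^{\mu+1}\, c_2^{dn}
\]
which is the desired bound for $d \ge 1$.

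There is no serious obstacle here: the work has already been done in Lemma \ref{C}, and the only new ingredient is the trivial observation that the number of conjugacy classes of elements of bounded order in $S_n$ grows subexponentially in $n$. The slight care needed is just handling the $d=0$ (surface/free) case separately so that the final exponent $d_1 n$ in $c_2^{d_1 n}$ gives room for the absolute constant $c_1$ to be swallowed.
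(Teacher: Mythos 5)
Your proposal is correct and follows essentially the same route as the paper: decompose ${\rm Hom}(\G,S_n)$ by the tuple $\bf C$ of conjugacy classes, bound the number of tuples by $p(n)^d$ with $p(n)$ the partition function, invoke Lemma \ref{C} for each summand, and absorb all $d$-indexed exponential factors into $c_2^{d_1 n}$. The only cosmetic difference is that you treat $d=0$ explicitly at the outset, whereas the paper handles it implicitly at the last line via the definition $d_1=\max(d,1)$ when setting $c_2 = c_1 c_4$.
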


\begin{proof} We clearly have
\[
|{\rm Hom}(\G,S_n)| = \sum_{\bf C} |{\rm Hom}_{\bf C}(\G,S_n)|,
\]
where the sum is over all possible ${\bf C} = (C_1, \dots , C_d)$.
Now, $S_n$ has $p(n)$ conjugacy classes, where $p(n)$ is the
partition function, and each summand $|{\rm Hom}_{\bf C}(\G,S_n)|$
can be bounded as in Lemma~\ref{C}. This yields
\[
|{\rm Hom}(\G,S_n)| \le p(n)^d \cdot  c_1 b^d (n^n)^{\mu + 1} (2e)^{dn}.
\]
It is well known that $p(n) \le c_3^{\sqrt{n}} \le c_3^n$ for some
absolute constant $c_3$. This yields
\[
|{\rm Hom}(\G,S_n)| \le c_1 (n^n)^{\mu + 1} c_4^{dn},
\]
for the absolute constant $c_4 = 2e b c_3$.
This easily implies the required conclusion (with $c_2 = c_1c_4$).
\end{proof}

We can now draw conclusions to the subgroup growth of Fuchsian groups.

\begin{prop}\label{main} There exists an absolute constant $c_5$ such
that with the above notation we have
\[
s_n(\G) \leq n^{\mu n} c_5^{d_1 n}
\]
for all $n$.
\end{prop}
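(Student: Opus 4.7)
The plan is to combine Lemma~\ref{upper} with the classical Marshall Hall formula that relates subgroups of finite index to homomorphisms into symmetric groups. Writing $t_k(\G)$ for the number of subgroups of index exactly $k$ in $\G$, one has
$$t_k(\G) \le \frac{|\mathrm{Hom}(\G,S_k)|}{(k-1)!},$$
because $(k-1)!\cdot t_k(\G)$ equals the number of transitive homomorphisms $\G\to S_k$ (via the action of $\G$ on the coset space of an index-$k$ subgroup, together with a choice of a representative coset mapped to the point $1$).

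First I would feed the estimate of Lemma~\ref{upper} into the numerator and Stirling's approximation $(k-1)! \ge c_0\, k^{k-1} e^{-k}$ into the denominator, obtaining
$$t_k(\G) \;\le\; \frac{k^{(\mu+1)k}\, c_2^{d_1 k}}{c_0\, k^{k-1} e^{-k}} \;=\; c_0^{-1}\, k\cdot k^{\mu k}\, c_2^{d_1 k}\, e^{k}\;\le\; c_0^{-1}\, k\cdot k^{\mu k}\, (c_2 e)^{d_1 k},$$
where the last inequality uses $d_1\ge 1$ to fold the stray $e^{k}$ into $(c_2 e)^{d_1 k}$. Since Siegel's inequality gives $\mu \ge 1/42 > 0$, the summand $k\cdot k^{\mu k}(c_2 e)^{d_1 k}$ is increasing in $k$ for $k\ge 1$, and summing over $k=1,\dots,n$ and bounding the sum by $n$ times its last term yields
$$s_n(\G) \;=\; \sum_{k=1}^{n} t_k(\G) \;\le\; c_0^{-1}\, n^2\, n^{\mu n}\, (c_2 e)^{d_1 n}.$$

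The final step is bookkeeping: absorb the polynomial prefactor $c_0^{-1} n^2$ into a constant raised to the power $d_1 n$. Choose $c_5 > c_2 e$ sufficiently large that $c_0^{-1} n^2 \le (c_5/(c_2 e))^{d_1 n}$; since $d_1 \ge 1$, this is automatic for all large $n$, and the finitely many small values of $n$ are handled by enlarging $c_5$ further. This yields $s_n(\G) \le n^{\mu n} c_5^{d_1 n}$ for all $n$, as required. The substance of the argument lies entirely in Lemma~\ref{upper}; the only mild obstacle here is ensuring that sub-exponential factors are uniformly absorbed, which is precisely what the positivity (indeed, lower bound) on $\mu$ allows.
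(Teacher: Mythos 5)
Your proof is correct and follows essentially the same route as the paper: both pass from Lemma~\ref{upper} to subgroup counts via the Marshall Hall relation $t_k(\G)\le |\mathrm{Hom}(\G,S_k)|/(k-1)!$, use a Stirling-type estimate to cancel the factor $(n^n)^{1}$ against $(n-1)!$, and then absorb the residual exponential and polynomial factors into $c_5^{d_1n}$ using $d_1\ge 1$. The one small service you add is explicitly verifying, via $\mu\ge 1/42>0$, that the per-index bound is monotone in $k$ so that the sum $s_n=\sum_{k\le n}t_k$ may be bounded by $n$ times the last term; the paper takes this step implicitly.
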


\begin{proof}
For a positive integer $n$, denote by $a_n(\G)$ the number of index
$n$ subgroups of $\G$.
Define
\[
{\rm Hom}_{trans}(\G,S_n) = \{ \phi \in {\rm Hom}(\G,S_n) : \phi(\G)
\hbox{ is transitive}\}.
\]
It is well known that $a_n(\G) = |{\rm Hom}_{trans}(\G,S_n)|/(n-1)!$
(see, for instance, \cite[1.1.1]{LS}).
Obviously, $|{\rm Hom}_{trans}(\G,S_n)| \le |{\rm Hom}(\G,S_n)|$,
so applying Lemma~\ref{upper} we obtain
\[
a_n(\G) \le  (n^n)^{\mu + 1} c_2^{d_1 n} / (n-1)!,
\]
and hence
\[
s_n(\G) \le  n \cdot (n^n)^{\mu + 1} c_2^{d_1 n} / (n-1)!.
\]
Since $n \cdot n^n / (n-1)! \le c_6^n$ for some absolute
constant $c_6$, we obtain
\[
s_n(\G) \le  c_6^n \cdot (n^n)^{\mu} c_2^{d_1 n},
\]
which implies the conclusion (with $c_5 = c_2 c_6$).
\end{proof}

We can finally prove the main result of this section, namely
Theorem~\ref{uniform}.

Note that $\mu = 2g-2 + \sum_{i=1}^d (1-1/m_i) \ge -2 + d/2$.
This yields $d \le 2\mu +4$, and so
\[
d_1 \le d+1 \le 2 \mu + 5.
\]
Next, since $\mu \ge 1/42$, we have $5 \le 210 \mu$, and so
$2 \mu + 5 \le 212 \mu$. This implies
\[
d_1 \le 212 \mu.
\]
Applying Proposition~\ref{main}, we obtain
\[
s_n(\G) \leq n^{\mu n} c_5^{212 \mu n} \le (cn)^{\mu n},
\]
where $c = c_5^{212}$.

This completes the proof of Theorem~\ref{uniform}. \qed

\medskip

We did not make an attempt
to optimize the constants appearing here and in Theorem~\ref{uniform}
in particular.

\begin{rem}
It is intriguing that there are no uniform
lower bounds on the subgroup growth of Fuchsian groups. For example,
let $\G$ be a triangle $(p,q,r)$-group with $p,q,r$ distinct
primes which are greater than $n$ (so $\G$ has a presentation as in (11)
with $g=s=t=0$, $d=3$ and $m_1=p, m_2 = q, m_3 = r$).
Then one easily sees that $s_n(\G)=0$. Since $n$ can be arbitrarily
large no uniform lower bounds exist.
\end{rem}

Finally, we can provide a lower bound on $s_n(\G)$ in terms of $\mu(\G)$
which holds for all $n$ larger than some number depending on $\G$:

\begin{prop}\label{prop:4.10} Let $\G$ be a Fuchsian group. Then we have
\[
s_n(\G) \ge (n!)^{\mu(\G)} \hbox{ for all sufficiently large } n.
\]
\end{prop}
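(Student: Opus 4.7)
The plan is to establish a lower bound of the form $|\mathrm{Hom}(\Gamma,S_n)| \ge c_\Gamma(n!)^{\mu(\Gamma)+1}$ for some positive $c_\Gamma$ and all sufficiently large $n$, show that transitive homomorphisms constitute a definite fraction of the total, and then invoke the identity $a_n(\Gamma) = |\mathrm{Hom}_{\mathrm{trans}}(\Gamma,S_n)|/(n-1)!$ together with $s_n(\Gamma) \ge a_n(\Gamma)$ to conclude $s_n(\Gamma) \ge c'_\Gamma n(n!)^{\mu(\Gamma)}$, which exceeds $(n!)^{\mu(\Gamma)}$ once $n$ is large. Throughout, for each elliptic generator $x_i$ of order $m_i$ I fix the conjugacy class $C_i \subset S_n$ of permutations of cycle shape $(m_i^{a_i},1^{n-m_ia_i})$ with $a_i = \lfloor n/m_i\rfloor$; Stirling's formula gives $|C_i|$ of order $(n!)^{1-1/m_i}$ up to subexponential factors.

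In the non-uniform case ($s+t>0$), $\Gamma$ is a free product $Z_{m_1} * \cdots * Z_{m_d} * F_r$ with $r=2g+s+t-1$, and the free-product identity \eqref{b3} directly yields
\[
 |\mathrm{Hom}_{\mathbf{C}}(\Gamma,S_n)| = (n!)^r \prod_{i=1}^d |C_i|,
\]
which is of the target order $(n!)^{\mu+1}$ since $r + \sum(1-1/m_i) = \mu+1$. In the uniform case ($s=t=0$) I apply the Frobenius formula \eqref{b2}: the trivial character contributes exactly $(n!)^{2g-1}\prod|C_i|$, matching the target, while the remaining characters are controlled by Proposition~\ref{FL}:
\[
 \Bigl|\sum_{\chi\ne 1}\frac{\chi(g_1)\cdots\chi(g_d)}{\chi(1)^{d-2+2g}}\Bigr| \le b^d n^{d/2}\sum_{\chi\in\mathrm{Irr}(S_n)}\chi(1)^{-\mu},
\]
with the right-hand sum bounded by Lemma~\ref{chardegs}. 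Using the sharper Fomin--Lulov estimate $|\chi(\pi)| \le bn^{1/2-1/(2m)}\chi(1)^{1/m}$ noted after Proposition~\ref{FL}, the accumulated negative exponent $n^{-\sum_i 1/(2m_i)}$ renders the non-trivial-character contribution negligible compared to the main term, so $|\mathrm{Hom}(\Gamma,S_n)| \ge c_\Gamma(n!)^{\mu+1}$.

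Finally, intransitive homomorphisms are counted by choosing a proper $\phi$-invariant subset of size $k$:
\[
 |\mathrm{Hom}(\Gamma,S_n)| - |\mathrm{Hom}_{\mathrm{trans}}(\Gamma,S_n)| \le \sum_{k=1}^{n-1}\binom{n}{k}|\mathrm{Hom}(\Gamma,S_k)|\cdot|\mathrm{Hom}(\Gamma,S_{n-k})|.
\]
Using the main-term asymptotic on each factor, the contribution from small fixed $k$ is of order $(n!)^{\mu+1}/n^{k\mu}$, while for balanced $k$ the factor $\binom{n}{k}(k!(n-k)!)^{\mu+1}$ is exponentially smaller than $(n!)^{\mu+1}$ since $(k!(n-k)!)/n! \sim 2^{-n}$ for $k \approx n/2$ and $\mu>0$. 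Summing gives $|\mathrm{Hom}_{\mathrm{trans}}(\Gamma,S_n)| \ge \tfrac12|\mathrm{Hom}(\Gamma,S_n)|$ for $n$ large, and dividing by $(n-1)!$ yields the desired bound. The principal obstacle is the character-sum control in the uniform torsion case, which requires the full strength of Lemma~\ref{chardegs} and the refined Fomin--Lulov bound; the intransitivity analysis and Stirling bookkeeping are routine by comparison.
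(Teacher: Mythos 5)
Your approach is genuinely different from the paper's: the paper simply quotes Theorem 4.6 of Liebeck--Shalev for Fuchsian groups with torsion, and separate classical results (for free groups, and M\"uller--Puchta for surface groups) for the torsion-free cases. You instead try to rederive the lower bound directly from formula \eqref{b2} and the Fomin--Lulov estimate. Unfortunately there is a real gap in the uniform torsion case, and it is exactly the place where the Liebeck--Shalev argument is most delicate.

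The problem is the control of the non-trivial characters. Applying the sharper Fomin--Lulov bound $|\chi(\pi)|\le b\,n^{1/2-1/(2m)}\chi(1)^{1/m}$ to each of the $d$ elliptic generators produces the factor
\[
b^d\,n^{\sum_i\bigl(\tfrac12-\tfrac{1}{2m_i}\bigr)}\,\chi(1)^{\sum_i 1/m_i},
\]
so the accumulated power of $n$ is $n^{\alpha}$ with $\alpha=\sum_i(1-1/m_i)/2>0$, not the negative power $n^{-\sum_i 1/(2m_i)}$ you quote (the $n^{d/2}$ contributions do not disappear). Consequently the error term is bounded by $b^d n^{\alpha}\sum_{\chi\neq 1}\chi(1)^{-\mu}$. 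Even granting the quantitative refinement $\sum_{\chi\ \text{nonlinear}}\chi(1)^{-\mu}=O(n^{-\mu})$, this gives $O(n^{\alpha-\mu})$, and $\alpha-\mu = 2-2g-\sum_i(1-1/m_i)/2$ is \emph{positive} whenever $g=0$ and $\mu$ is small. For the $(2,3,7)$-triangle group, for instance, $\mu=1/42$ while $\alpha\approx 1.5$, so the bound diverges. In other words, Fomin--Lulov alone is far too lossy on the low-degree characters (e.g.\ $\chi_{(n-1,1)}$, for which the true value $|\chi(g_i)|\le n-1-ma_i+1$ is essentially $1$ when $m_i\mid n$, not $n^{1/2-1/(2m_i)}(n-1)^{1/m_i}$). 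The Liebeck--Shalev proof of their Theorem~4.6 handles the small-degree characters separately with exact character value computations and a more refined bootstrapping; your sketch elides precisely that step. This is why the paper chooses to cite Theorem~4.6 of \cite{LiSh} rather than reprove it.

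There is a smaller issue in the non-uniform (free product) case: the specific class $C_i$ of cycle shape $(m_i^{a_i},1^{n-m_ia_i})$ has $|C_i|\sim (n!)^{1-1/m_i}\cdot n^{-(1-1/m_i)/2}$, so the Stirling correction is a \emph{negative} power of $n$, and your single-class lower bound gives only $a_n(\Gamma)\gtrsim (n!)^\mu n^{1-\alpha}$, which does not exceed $(n!)^\mu$ when $\alpha>1$. This is fixable by replacing $|C_i|$ with the count $t_{m_i}(n)$ of all elements of order dividing $m_i$ (which is superpolynomially larger than $(n!)^{1-1/m_i}$, by the Moser--Wyman/Chowla asymptotics), but as written the estimate falls short. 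Your intransitivity bookkeeping and the reduction to $a_n$ via the transitive-homomorphism count are fine, assuming the main-term lower bound held.
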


\begin{proof} A somewhat stronger result, namely
$a_n(\G) \ge (n!)^{\mu(\G)} \cdot n^b$
for every fixed $b$ and for all large $n$ is given in Theorem 4.6
of \cite{LiSh}. However, the proof given there only works for Fuchsian
groups with torsion. The remaining Fuchsian groups are free groups and
surface groups.
For the free group $\G = F_d$ ($d \ge 2$) it is well known that
$a_n(\G) \sim (n!)^{d-1} \cdot n = (n!)^{\mu(\G)} \cdot n$.
For surface groups $\G$ it is known that
$a_n(\G) \sim 2 (n!)^{\mu(\G)}  \cdot n$ (see \cite{MP}).
This implies the result. In fact, it follows that for any constant
$c<1$ and for any Fuchsian group $\G$ we have
\[
a_n(\G) \ge cn (n!)^{\mu(\G)},  \hbox{ for all sufficiently large } n,
\]
and this lower bound is best possible.
\end{proof}


\section {Proof of the main results}\label{sec:proof}

We can use now the results of Sections \ref{sec:d(Gamma)}, \ref{sec:maximal} and \ref{sec:Fushcian} to prove the main Theorems.

\subsection{The proof of Theorem \ref{thm:upperbound}.} Recall from \cite{LS}, Lemma 1.1.2 that if $\gC$ is finitely generated group with $d(\gC)$ generators, then $s_n(\gC)\le n^{d(\gC)n}$ where $s_n(\gC)$ denotes the number of subgroups of index at most $n$ in $\gC$. Let now $H$ be a fixed rank one simple Lie group. By Theorem \ref{thm:counting-maximal} and \cite{Be}, $\mathrm{MAL}_H(x) = \mathrm{MAL}^{u}_H(x) + \mathrm{MAL}^{nu}_H(x)\le x^{a\log x}$. Now, every arithmetic lattice $\gC$ of covolume at most $x$ is contained in a maximal lattice $\gL$ of covolume, say, $y\le x$, and $[\gL:\gC]\le\frac{x}{y}$. Hence,
$$
 \mathrm{AL}_H(x)\le\mathrm{MAL}_H(x)\cdot\max\{s_{\frac{x}{y}}(\gL):y\le    x,\mathrm{covol}(\gL)=y\}
$$
where $\gL$ runs over the finitely many conjugacy classes of arithmetic lattices of covolume at most $x$. By Theorem \ref{thm:d(Gamma)}, for such $\gL$, $d(\gL)\le Cy$ for some constant $C=C(H)$, hence
$$
 s_\frac{x}{y}(\gL)\le(\frac{x}{y})^{d(\gL)\frac{x}{y}}\le(\frac{x}{y})^{Cy\frac{x}{y}}.
$$
Now by Kazhdan-Margulis theorem (which also follows from Theorem \ref{thm:d(Gamma)} as we noted in the introduction), $y\ge C'$ for some $C'=C'(H)$, and hence
$$
 \mathrm{AL}_H(x)\le x^{a\log x}(\frac{x}{C'})^{Cx}\le x^{bx}
$$
for some constant $b$ when $x\gg 0$.
\qed

\begin{rem}\label{rem:5.1}
As mentioned in Remark \ref{rem:d(Gamma)}, Theorem \ref{thm:d(Gamma)} holds also for higher rank semisimple Lie groups, thus the proof shows that Theorem \ref {thm:upperbound} is valid in general. We refer to \cite{BL} for better estimates in the higher rank case.
\end{rem}


\subsection{The proof of Theorem \ref{thm:lowerbound}.}
Fix $n\ge 2$. In \cite{Lub2} it was shown that $H=\PSO(n,1)$ has an arithmetic lattice $\gC$ which is mapped onto the free group on two generators. Let $\mathrm{L}(n)$ be the set of subgroups of $\gC$ of index at most $n$. Then
$$
 |\mathrm{L}(n)|=s_n(\gC)\ge s_n(F_2)\ge n!
$$
(see \cite[Corollary 2.1.2]{LS} for the right hand side inequality).
Thus, if $\mathrm{covol}(\gC)=v_0$, $\gC$ contains at least $n!$ sublattices of covolume at most $nv_0$. We should prove that not too many of them are conjugate in $H$.

To prove this we argue as follows: Let $C(n)$ be the set of congruence subgroups of $\gC$ of index at most $n$. By \cite{Lub1}, $|C(n)|\le n^{c_0\log n/\log\log n}$ for some constant $c_0$. There is a map $\phi:\mathrm{L}(n)\to C(n)$ sending a subgroup in $\mathrm{L}(n)$ to its closure in the congruence topology of $\gC$. There are therefore at least $\frac{n!}{n^{c_0\log n/\log\log n}}\ge n^{c_1n}$ subgroups in $\mathrm{L}(n)$ with the same congruence closure. If $\gC_1$ and $\gC_2$ are two such subgroups with congruence closure $\gC_0$ which are conjugate in $H$, say $\gC_1=g\gC_2 g^{-1}$, then $g\in\mathrm{Comm}_H(\gC)$, the commensurability group of $\gC$ in $H$. As $\gC$ is arithmetic, $\mathrm{Comm}_H(\gC)$ acts by $k$-rational morphisms (more precisely, in the adjoint representation it has entries in $k$, where $k$ is the number field over which $\gC$ is defined), hence $g$ preserves the congruence topology. Thus, $g$ normalizes $\gC_0$, i.e. $g\in N_H(\gC_0)$. Recall that $N_H(\gC_0)$ is itself a lattice in $H$ (cf. \cite{Raghunathan}), hence by Kazhdan-Margulis theorem (or by \ref{thm:d(Gamma)}) has covolume at least $C'$. This implies that at most $c_2n$ subgroups like $\gC_2$ can be conjugated to $\gC_1$ in $H$. Thus, $H$ has at least $n^{c_1n}/c_2n\ge n^{c_1'n}$ conjugacy classes of arithmetic lattices of covolume at most $nv_0$. This proves Theorem \ref{thm:lowerbound}.


\subsection{The proof of Theorem \ref{thm:SL(2,R)}}
We shall prove a more precise statement:

\begin{thm} \label{theorem:4.1}
Let $H=\PSL_2(\BR)$ and $\mathrm{AL}_H(x)$ the number of conjugacy classes of arithmetic lattices in $H$ of covolume at most $x$. Then there exist $6<r<s\in \BR$ such that
$$
 (rx)^{\frac{1}{2\pi}x} \leq \mathrm{AL}_H(x)\leq (sx)^{\frac{1}{2\pi}x}
$$
for all sufficiently large $x$.
\end{thm}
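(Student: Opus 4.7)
The plan is to prove matching upper and lower bounds of the form $(\text{const}\cdot x)^{x/(2\pi)}$ by combining three already-established ingredients—the count of maximal arithmetic lattices (Theorem~\ref{thm:counting-maximal}), the uniform Fuchsian subgroup-growth upper bound (Theorem~\ref{thm:nsubgroups}), and its lower counterpart (Proposition~\ref{prop:4.10})—all linked through the Gauss-Bonnet identity $\covol(\Gamma)=-2\pi\chi(\Gamma)=2\pi\mu(\Gamma)$, which translates covolume directly into the Euler-characteristic parameter that governs subgroup growth.

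For the upper bound I would start from
$$ \mathrm{AL}_H(x)\;\le\;\sum_{\Lambda} s_{\lfloor x/\covol(\Lambda)\rfloor}(\Lambda), $$
summing over conjugacy classes of maximal arithmetic lattices $\Lambda$ of covolume at most $x$. By Theorem~\ref{thm:counting-maximal} the number of terms is at most $x^{\beta(\log x)^{\epsilon}}$, negligible at the $x\log x$ scale. Writing $y=\covol(\Lambda)$, Theorem~\ref{thm:nsubgroups} together with Gauss-Bonnet gives $s_{x/y}(\Lambda)\le (cx/y)^{(y/(2\pi))(x/y)}=(cx/y)^{x/(2\pi)}$. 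The crucial feature is that the exponent $x/(2\pi)$ is independent of $y$, so maximizing the base $cx/y$ over admissible $y$ (bounded below by $\pi/21$ via Siegel's theorem) yields $\mathrm{AL}_H(x)\le (sx)^{x/(2\pi)}$ for a suitable constant $s$.

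For the lower bound I would fix a specific cocompact arithmetic Fuchsian lattice $\Lambda_0$ of smallest possible covolume $v_0=\pi/21$, namely the $(2,3,7)$ triangle group, so that $\mu_0=1/42$. Proposition~\ref{prop:4.10} supplies $s_n(\Lambda_0)\ge (n!)^{\mu_0}$ for all sufficiently large $n$; taking $n=\lfloor x/v_0\rfloor$ exhibits that many sublattices of covolume at most $x$, and Stirling converts $(n!)^{\mu_0}$ into $(rx)^{x/(2\pi)}$ up to polynomial-in-$x$ corrections. To pass from distinct subgroups of $\Lambda_0$ to distinct $H$-conjugacy classes I would reuse the congruence-topology argument from the proof of Theorem~\ref{thm:lowerbound}: Lubotzky's bound $|C(n)|\le n^{c_0\log n/\log\log n}$ limits the number of possible congruence closures, and two sublattices sharing a common closure $\Gamma_0$ can be $H$-conjugate only via an element of $N_H(\Gamma_0)$, itself a lattice of covolume bounded below by Kazhdan-Margulis, costing only a polynomial-in-$x$ factor.

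The main obstacle is the explicit constant bookkeeping. The exponent $x/(2\pi)$ is essentially rigid, being forced by Gauss-Bonnet together with $\log n!\sim n\log n$, so the whole game reduces to pinning down the base. For the upper bound one inherits the constant $c$ of Theorem~\ref{thm:nsubgroups}; for the lower bound one must verify that the Stirling base $\sim 1/(ev_0)\cdot x$, together with the polynomial losses from the congruence step and from bounding the normalizer, still lands in the asserted range $6<r<s$. Each such polynomial factor contributes only $o(x\log x)$ to the exponent and enters the base only through its logarithm, so the leading-order $(\text{const}\cdot x)^{x/(2\pi)}$ form is robust; the delicate step is precisely ensuring that the numerical constants emerging from $v_0=\pi/21$ and the various polynomial losses combine into concrete $r$ and $s$ of the required form.
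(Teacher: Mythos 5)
Your proposal follows the paper's proof essentially verbatim: the same upper-bound decomposition over maximal lattices combining Theorem~\ref{thm:counting-maximal}, Theorem~\ref{thm:nsubgroups}, Gauss--Bonnet, and Siegel's $\covol\ge\pi/21$; and the same lower-bound construction starting from the $(2,3,7)$-triangle group via Proposition~\ref{prop:4.10} with the congruence-topology deconjugation argument imported from the proof of Theorem~\ref{thm:lowerbound}. The constant $r>6$ then drops out exactly as you indicate, since the Stirling/polynomial losses enter only sublinearly while the base carries the factor $21/\pi>6$.
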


\begin{proof}
We will prove first the upper bound. As in the proof of Theorem \ref{thm:upperbound} above, $\mathrm{AL}_H(x)\le\mathrm{MAL}_H(x)\cdot\max\{ s_{\frac{x}{y}}(\gL)\}$. Again, by Theorem \ref{thm:counting-maximal}, $\mathrm{MAL}_H(x)\le x^{b\log x}$, but this time we also know that $y=\mathrm{covol}(\gL)=-2\pi\chi(\gL)$ and by Theorem \ref{thm:nsubgroups}, $s_\frac{x}{y}(\gL)
\le(c\frac{x}{y})^{-\chi(\gL)\frac{x}{y}}$. Thus altogether
$$
 \mathrm{AL}_H(x)\le x^{b\log x}\cdot(c\frac{x}{y})^{-\chi(\gC)\frac{x}{-2\pi\chi (\gC)}}\le (sx)^{\frac{1}{2\pi}x}
$$ for a suitable constant $s$, as $y\ge\frac{\pi}{21}$ (by Siegel's theorem).

To prove the lower bound, take $\gC$ to be the
arithmetic lattice in $H=\PSL_2(\BR)$ of the lowest covolume ($\frac{\pi}{21}$), namely the $(2,3,7)$-triangle group.
By Proposition \ref{prop:4.10}, $s_n(\gC)\ge n^{-\chi(\gC)n}$ for $n\gg 0$. Again using the congruence topology of $\gC$ and the upper bound on the number of congruence subgroups from \cite{Lub1}, we can deduce, as in the proof of Theorem \ref{thm:lowerbound} above, that $\gC$ has at least
$n^{-\chi(\gC)n}/[(n^{c_0\log n/\log\log n})(c_2n)]\ge (c_3n)^{-\chi(\gC)n}$
subgroups of index at most $n$ which belong to different $H$-conjugacy classes, where the constant $c_3$ could be taken $1-\gep$ for arbitrary positive $\gep$. Each such subgroup is a lattice of covolume at most $x:=n\cdot\mathrm{covol}(\gC)=-2\pi\chi(\gC)n$. Thus we have found at least
$(c_3\frac{x}{\mathrm{covol}(\gC)})^{-\chi(\gC)\frac{x}{-2\pi\chi(\gC)}}\ge(\frac{c_3}{(\pi/21)}x)^{\frac{x}{2\pi}}$
conjugacy classes of arithmetic lattices in $H$ of covolume at most $x$. This finishes the proof of Theorem \ref{theorem:4.1} and consequently of \ref{thm:SL(2,R)}.
\end{proof}

Let us remark that the use of the $(2,3,7)$-triangle group was made only to ensure that $r>6$. For proving the lower bound of Theorem \ref{thm:SL(2,R)} we could use any arithmetic lattice in $\PSL_2(\BR)$.

\begin{proof}[Proof of Corollary \ref{cor:1.4}]
The genus $g$ surfaces are in one to one correspondence with the conjugacy classes of torsion free lattices in $\PSL_2(\BR)$ of covolume $4(g-1)\pi$. Thus Theorem \ref{thm:SL(2,R)} implies the upper bound in \ref{cor:1.4}. The proof of the lower bound is similar to the proof of the lower bound in \ref{thm:SL(2,R)}, just choose as a starting arithmetic lattice one which is {\it torsion free} and then count its finite index subgroups.
\end{proof}

\end{document}